\newtheorem{theorem}{Theorem}
\newtheorem{lemma}{Lemma}
\newtheorem{definition}{Definition}
\newtheorem{remark}{Remark}
\newcommand{\db}{qp} %dimension beta
\newcommand{\dg}{r} %dimension Gamma
\title{Posterior contraction rates in a sparse non-linear mixed-effects model}
\date{} 					% Or removing it
\author{ Marion Naveau \\ Université Paris-Saclay, AgroParisTech, INRAE, UMR MIA-Paris-Saclay, 75005, Paris, France. \\
         Université Paris-Saclay, INRAE, MaIAGE, 78350, Jouy-en-Josas, France. \\
%	     \texttt{marion.naveau@inrae.fr} \\
	%% examples of more authors
        \And
	Maud Delattre \\ Université Paris-Saclay, INRAE, MaIAGE, 78350, Jouy-en-Josas, France. \\
%	\texttt{maud.delattre@inrae.fr} \\
	\And 
	Laure Sansonnet \\ Université Paris-Saclay, AgroParisTech, INRAE, UMR MIA-Paris-Saclay, 75005, Paris, France. \\ 
    Sorbonne Université, Université Paris Cité, CNRS, LPSM, 75005 Paris, France. \\
%	\texttt{laure.sansonnet@agroparistech.fr} \\
}
\theoremstyle{plain}
\theoremstyle{definition}
\newtheorem*{example}{Example}
\newtheorem{assumption}{Assumption}
\def\keywordname{{\bfseries Keywords}}%
\def\keywords#1{\par\addvspace\medskipamount{\rightskip=0pt plus1cm
\def\and{\ifhmode\unskip\nobreak\fi\ $\cdot$
}\noindent\keywordname\enspace\ignorespaces#1\par}}
\begin{document}
\maketitle

\begin{abstract}
Recent works have shown an interest in investigating the frequentist asymptotic properties of Bayesian procedures for high-dimensional linear models under sparsity constraints. However, there exists a gap in the literature regarding analogous theoretical findings for non-linear models within the high-dimensional setting. The current study provides a novel contribution, focusing specifically on a non-linear mixed-effects model. In this model, the residual variance is assumed to be known, while the regression vector and the covariance matrix of the random effects are unknown and must be estimated. The prior distribution for the sparse regression coefficients consists of a mixture of a point mass at zero and a Laplace distribution, while an Inverse-Wishart prior is employed for the covariance parameter of the random effects. First, the effective dimension of this model is bounded with high posterior probabilities. Subsequently, we derive posterior contraction rates for both the covariance parameter and the prediction term of the response vector. Finally, under additional assumptions, the posterior distribution is shown to contract for recovery of the unknown sparse regression vector at a rate similar to that established in the linear case.
\end{abstract}

% keywords can be removed
\keywords{Posterior contraction rate \and Sparse priors \and Non-linear mixed-effects models \and High-dimensional regression}

\section{Introduction}

Recent statistical literature has shown a keen interest in estimating high-dimen\-sional models under sparsity assumptions, with different approaches proposed over the past few decades in both Bayesian and frequentist frameworks. The developed methodologies are numerous and use a large variety of techniques such as convex and non-convex penalization techniques, shrinkage methods and sparsity-inducing priors. In Bayesian analysis, a category of proposed priors includes those defined as mixtures of two distributions, commonly referred to as spike-and-slab priors. These priors have proven to be useful and relevant in many high-dimensional applications as demonstrated in \citet{george, george97, tadesse2021handbook}.

The frequentist asymptotic properties of Bayesian sparse linear regression models with various types of mixture priors have been widely investigated, particularly in \citet{narisetty_bayesian_2014}, \citet{castillo_bayesian_2015}, and \citet{rockova_SSL} with the spike-and-slab Gaussian prior, the discrete spike-and-slab prior, and the spike-and-slab lasso prior respectively. Subsequently, these investigations were extended to multivariate linear regression with an unknown residual covariance matrix, as discussed by \citet{ning_bayesian_2020} with a discrete spike-and-slab prior and \citet{shen_posterior_2022} with a multivariate spike-and-slab lasso prior. The classical techniques for determining posterior contraction rates (see \textit{e.g.} \citet{castillo_bayesian_2015}) face limitations when the residual covariance matrix is unknown. In such scenarios, the general theory \citep{ghosal2017fundamentals} based on the average squared Hellinger distance proves inadequate for obtaining rates in terms of the Euclidean norm for parameters. To overcome this difficulty, an alternative approach has been introduced, leveraging the average Rényi divergence of order 1/2. As underscored by \citet{ning_bayesian_2020}, this method enables the construction of exponentially powerful tests that are required by the general theory \citep{ghosal2017fundamentals}, facilitating a more effective determination of posterior contraction rates in Bayesian analysis. Another theoretical aspect requires adaptation to the general theory when the residual covariance matrix is unknown. Indeed, classical proofs require lower bounds for prior mass around true parameter values, but when the residual covariance matrix is unknown, this condition can only be fulfilled if the true parameter set is bounded, as discussed in works of \citet{ning_bayesian_2020} and \citet{jeong_posterior_2021, jeong_unified_2021}.

Recent advancements have expanded the study of estimation and selection properties to more complex models than sparse linear regression models, such as sparse generalized linear models \citep{jiang_bayesian_2007, jeong_posterior_2021} or sparse linear regression models with nuisance parameters \citep{jeong_unified_2021}. To our knowledge, there are no similar theoretical results for non-linear models in high-dimensional contexts. The absence of theoretical results in this domain may reflect the inherent challenges and complexities associated with extending such analyses to non-linear models. The present paper fulfills this gap and provides a contribution in this direction, focusing on a specific non-linear model which also contains random effects. Mixed-effects models have been introduced to analyze observations collected repeatedly on several individuals in a population of interest, commonly encountered in fields such as pharmacokinetics or biological growth modeling for example \citep{pinheiro, lavielle}. These models, which are generally non-linear, may use high-dimensional covariates to describe inter-individual variability. Our paper deals with a generalization of the linear mixed-effects model to a non-linear marginal version where the fixed effects are non-linear functions of the regression parameter, while the random effects are incorporated into the model in a linear manner (see \textit{e.g.} \citet{demidenko2013mixed}). Such non-linear marginal mixed-effects models are easier to handle than more general non-linear mixed-effects models because the mean and the covariance matrix of the response variable are explicit. Recently, some computational solutions have been developed for estimating and selecting variables in high-dimensional context for this type of model \citep{ollier, naveau_bayesian_2023}. However, despite their practical appeal, there has been a lack of theoretical exploration concerning non-linear marginal mixed models in high-dimensional context. In this paper, posterior contraction rates are obtained for both the covariance matrix and the prediction term in a high-dimensional setting by using a mixture of a point mass at zero and a Laplace distribution prior on the regression coefficients, and an inverse Wishart prior on the covariance matrix. Furthermore, we extend these results to the regression coefficients themselves, under additional assumptions that ensure the identifiability of these coefficients. To obtain these results from the general theory, new arguments had to be developed to overcome the difficulty of the model's non-linearity.

This paper is organized as follows. Section~\ref{sec_model} describes the non-linear marginal mixed model to introduce the notation, defines the prior distributions, along with the necessary assumptions. Section~\ref{sec_results} provides the main results on the posterior contraction with an example of non-linear marginal mixed-effects model that satisfies our conditions. We also provide a conclusion in Section~\ref{sec_conclusion}. Finally, the proofs of the theorems are given in Section~\ref{sec_proofs}. Proofs of technical lemmas are postponed in the appendices. 

\paragraph{Notation}

This paragraph describes the notations used in this paper for a generic matrix $A=\left(a_{ij}\right)_{i,j}$ and a generic vector $\theta \in \mathbb{R}^k$. We note $S_{\theta}=\{j | \theta_j \neq 0\}$ the support of $\theta$ and $s_{\theta}=|S_{\theta}|$ its cardinal. The Euclidean norm, the $\ell_1$-norm and the infinity norm are respectively noted $\| \theta \| _2=\left(\sum_{i=1}^k \theta_i^2\right)^{1/2}$, $\|\theta\|_1=\sum_{i=1}^k |\theta_i|$, and $\| \theta\| _{\infty}=\max_i | \theta_i| $. The transpose of $A$ is denoted by $A^\top$. For a square matrix $A$, we note $\rho_{min}(A)$ and $\rho_{max}(A)$ the minimum and maximum eigenvalues of $A$, respectively. The spectral norm of a matrix $A$ is denoted $\| A \| _{sp} = \rho^{1/2}_{max}(A^\top A)$, and the Frobenius norm is noted $\| A \| _{F} = \text{Tr}(A^\top A)^{1/2} =$ $(\sum_{i,j} a_{ij}^2)^{1/2}$. The matrix norm $\|A\|_{*}$ is defined as $\|A\|_{*}= \max_j \| A_{\cdot j}\|_2$ for $A_{\cdot j}$ the $j$-th column of $A$. The identity matrix of size $m$ is denoted $I_m$. The set of real symmetric positive-definite matrices is denoted by $\mathcal{S}_n^{++}$.

For sequences $a_n$ and $b_n$, the notation $a_n \lesssim b_n$ means that for $n$ large enough $a_n$ is bounded above by a constant multiple of $b_n$, \textit{i.e.} $a_n \leq C b_n$ for $n$ large enough, where $C>0$ is independent of $n$, and $a_n \asymp b_n$ means $a_n \lesssim b_n \lesssim a_n$. We denote $a_n=o(b_n)$ if $a_n/b_n$ tends to 0 when $n \to \infty$.

\section{Model description}
\label{sec_model}

\subsection{Non-linear marginal mixed-effects model}
\label{ss_sec_model}
Mixed-effects models are sophisticated multivariate statistical models employed to analyze repeated observations, usually collected over time, on $n$ statistical subjects, incorporating both fixed and random effects into the model for accurate description \citep{demidenko2013mixed, lavielle}. We consider the following mixed-effects model which states that for all $i \in \{1,\ldots,n\},j \in \{1,\ldots,m_i\}$:
% \begin{equation}
%     \label{NLMM}
%     Y_{i}= f_i(X_i \beta) + Z_i \xi_i + \varepsilon_i,  \quad \varepsilon_i \overset{\text{ind.}}{\sim} \mathcal{N}_{m_i}(0,\sigma^2 I_{m_i}),  \quad \xi_i \overset{\text{i.i.d.}}{\sim} \mathcal{N}_q(0,\Gamma), \quad i=1,\ldots,n.
% \end{equation}

\begin{equation}
    \label{NLMMij}
    Y_{ij}= f(\varphi_i,t_{ij}) + Z_i(t_{ij})^\top \xi_i + \varepsilon_{ij}.  
\end{equation}
In the above equation, $\xi_i \in \mathbb{R}^r$ is a vector composed of $r$ random effects, $\varepsilon_{ij} \in \mathbb{R}$ is an error term, $t_{ij}$ is the time at which the $j$-th observation of individual $i$ - $Y_{ij}$ - is recorded, $m_i$ is the number of observations for subject $i$, and $Z_i(t_{ij}) \in \mathbb{R}^r$ is a set of potentially time-dependent explanatory variables that relate the observations to the random effects. Moreover, $f : \mathbb{R}^q \times \mathbb{R} \rightarrow \mathbb{R}$ is a known and potentially non-linear function in terms of the $q$ components of its parameter $\varphi$ which itself is supposed to differ from one individual $i$ to the other in $\{1,\ldots,n\}$ according to a linear combination of covariates organized in a vector $V_i \in \mathbb{R}^{p}$ with coefficients given by $\tilde{\beta} \in \mathbb{R}^{q \times p}$: $\varphi_i = \tilde{\beta} V_i$. For a more cohesive presentation, denote $Y_i = (Y_{i1},\ldots,Y_{im_i})^\top  \in \mathbb{R}^{m_i}$ the vector of observations for subject $i$, $\epsilon_i = (\epsilon_{i1},\ldots,\epsilon_{im_i})^\top  \in \mathbb{R}^{m_i}$ the vector of residuals for subject $i$, the stacked matrix $Z_i=(Z_i(t_{i1})^\top,\ldots,Z_i(t_{im_i})^\top)^\top \in \mathbb{R}^{m_i \times r}$, the $\mathbb{R}^{q \times (q p)}$ block matrix $X_i = \begin{pmatrix}
 V_i^\top & 0 & \ddots & 0\\
 0 &  V_i^\top & \ddots & 0\\
  0 & 0 & \ddots &  V_i^\top 
\end{pmatrix}$ and $\beta=\mathrm{Vec}(\tilde{\beta}^\top) \in \mathbb{R}^{\db}$ the vectorization of $\tilde{\beta}^\top$. Note that vectorizing the coefficient matrix simplifies the mathematical process required to obtain the results presented in Section~\ref{sec_results}. We have $\varphi_i= \tilde{\beta} V_i = X_i \beta$. Let us also define the vector functions $f_i : \mathbb{R}^q \rightarrow \mathbb{R}^{m_i}$, $i \in \{1,\ldots,n\}$, such that for every $\varphi \in \mathbb{R}^q$:
$$
f_i(\varphi) = (f(\varphi,t_{i1}), f(\varphi,t_{i2}), \ldots, f(\varphi,t_{im_i}))^{\top}.
$$
Then the model defined by~\eqref{NLMMij} can be written as
\begin{equation}
    \label{NLMM}
    Y_{i}= f_i(X_i \beta) + Z_i \xi_i + \varepsilon_{i}.
\end{equation}
In addition, we assume that $\varepsilon_i$ are independent random vectors with distribution $\mathcal{N}_{m_i}(0,\sigma^2 I_{m_i})$ and $\xi_i$ are \textit{i.i.d.} random vectors with distribution $\mathcal{N}_r(0,\Gamma)$.
%$\varepsilon_i \overset{\text{ind.}}{\sim} \mathcal{N}_{m_i}(0,\sigma^2 I_{m_i})$ and $\xi_i \overset{\text{i.i.d.}}{\sim} \mathcal{N}_r(0,\Gamma)$.
The term $X_i \beta$ is crucial in these models, as the sparsity of $\beta$ provides insight into the covariates associated with the observed variability between individuals. It is important to emphasize that \eqref{NLMM} defines a broad class of models, encompassing both linear mixed-effects models and a wide variety of nonlinear mixed-effects models. While opting for $f_i(X_i\beta)=A(t_{i1},\ldots,t_{im_i})X_i\beta$, with $A(t_{i1},\ldots,t_{im_i}) \in \mathbb{R}^{m_i \times q}$ a known matrix potentially depending on time, yields the standard linear mixed-effects model, it is common in many applications to select a non-linear function $f$. The choice of the function $f$ can be guided by various criteria, such as the shape of the observed dynamics or a priori knowledge about the phenomenon under study. In the latter case, a mechanistic model is used to reflect known underlying mechanisms. This approach is common in pharmacology for example, where models are typically mechanistic by nature.

\begin{example}[logistic growth model]
A commonly used nonlinear function $f$ in various applied fields of mixed-effects models is the logistic function that corresponds, in the model defined by~\eqref{NLMMij}, to
%For instance, in the model defined by~\eqref{NLMMij}, one may choose the function 
$$f(\varphi_i,t_{ij})=\dfrac{\varphi_{i1}}{1+\exp\left(\varphi_{i2}(t_{ij}-\varphi_{i3})\right)},$$ where $\varphi_i=(\varphi_{i1},\varphi_{i2},\varphi_{i3})^\top$ are individual-specific parameters representing the upper asymptote, growth rate, and inflection point. This type of sigmoidal growth curve is widely used in plant and animal breeding, pharmacokinetics, and ecological modeling to capture individual-level dynamics over time within a mixed-effects framework \citep{pinheiro, lavielle}.
\end{example}

When $f$ is non-linear, model defined by~\eqref{NLMM} is alternatively referred to as the non-linear marginal mixed-effects model (as discussed in \citet{demidenko2013mixed}). The term "marginal mixed-effects model" is derived from the fact that, unlike numerous other non-linear mixed-effects models, both the expectation and variance of the observations possess an explicit expression. The distribution of $Y_i$ defined through \eqref{NLMM} is thus fully characterized: 
\begin{equation}
\label{NLMM_compact}
    Y_i \sim \mathcal{N}(f_i(X_i \beta), \Delta_{\Gamma,i}) \text{, where }\Delta_{\Gamma,i}=Z_i \Gamma Z_i^\top + \sigma^2 I_{m_i}.
\end{equation}
It is worth noting that mixed-effects machine learning methods as in \cite{krennmair2022flexible,kilian2023mixed} rely on this formulation of a nonlinear mixed-effects model, with the key distinction that the function $f$ is nonparametric, unlike in our study framework.
As briefly mentioned above, when $f$ is parametric, a significant focus lies on the covariate selection process in $X_i$, as it allows establishing connections between inter-individual variability and measured individual characteristics. The motivation for our work stems from this question, and its specific aim is to estimate $\beta \in \mathcal{B}:=\mathbb{R}^{\db}$ and $\Gamma \in \mathcal{H}:=\mathcal{S}_n^{++}$ in an high-dimensional setting where $p >> n$ and to obtain posterior contraction results. To this end, we assume that the residual variance $\sigma^2$, the number $q$ of individual parameters $\varphi_i$, and the number $r$ of true random effects are fixed and known. We establish below appropriate priors to achieve these goals.

\subsection{Prior specification}
\label{sec_prior}

Drawing from classical literature in high-dimensional Bayesian analysis, this study adopts an approach employing priors that induce sparsity in $\beta$ coefficients. For that purpose, we jointly consider a prior $\pi_p$ on the number $s$ of non-zero coefficients in $\beta$ and a Laplace prior on the non-zero coefficients in $\beta$ while setting the other components in $\beta$ to zero:
\begin{equation}
\label{prior_S_beta}
(S, \beta) \mapsto \dfrac{\pi_p(s)}{\binom{\db}{s}} g_S(\beta_S) \delta_0(\beta_{S^c}),
\end{equation}
where $S$ is a subset of $s$ elements in $\{1, \dots, \db\}$ that represents the support of $\beta$, \textit{i.e.} the positions of its non-zero elements, $S^c$ is the complementary subset of zero coefficients in $\beta$, $\beta_S=\{\beta_{\ell} \vert \ell \in S\}$ and $\beta_{S^c}=\{\beta_{\ell} \vert \ell \notin S\}$ are the coefficients of $\beta$ on $S$ and $S^c$ respectively, $\delta_0$ is the Dirac measure at zero on $\mathbb{R}^{\db-s}$ and 
\begin{equation}
\label{eq_loi_Laplace}
    g_S(\beta_S) = \prod_{\ell \in S} \dfrac{\lambda}{2} \exp(- \lambda \vert \beta_{\ell} \vert).
\end{equation}

Concerning the random effects covariance matrix $\Gamma$, a conjugate inverse-Wishart prior is used:

\[\pi(\Gamma) \propto \vert \Gamma \vert^{-\frac{d+r+1}{2}} \exp\left(-\dfrac{1}{2} \text{Tr}(\Sigma \Gamma^{-1})\right),\]
where $\Sigma$ is a positive definite matrix of size $r \times r$,  and $d>r-1$ the degree of freedom. This prior is chosen for a practical matter. Note that, as discussed in \citet{ning_bayesian_2020}, the inverse-Wishart prior may induce sub-optimal posterior contraction rate due to its weaker tail property when $r$ increases to infinity. However, here $r$ is assumed to be fixed so the rate should not be impacted by this property.

\subsection{Assumptions}\label{sec::ass}

We adopt the classical frequentist assumption that the data, consisting of $n$ independent observations coming from $n$ independent subjects $Y^{(n)} = (Y_i)_{1 \leq i \leq n} \in \mathbb{R}^N$, where $N = \sum_{i=1}^n m_i$, were generated from model defined by~\eqref{NLMM} under a given sparse regression parameter $\beta_0$ and a given random effects covariance matrix $\Gamma_0$. The expectation under these true parameters is denoted $\mathbb{E}_{0}$. The support of the true parameter $\beta_0$ is denoted $S_0$ and its cardinal $s_0$. We assume that $\max_{1\leq i \leq n} m_i \leq M_{\text{obs}}$, where $M_{\text{obs}}>0$ is a fixed constant that does not depend on $n$.

\subsubsection{Assumptions on the non-linear model structure}

Assumptions have to be made on the regression function $f$ to obtain posterior contraction. A first natural condition is the Lipschitz assumption, allowing for easy control of the regression function from its inputs.
\begin{assumption}
\label{hyp_lip}
$f : \mathbb{R}^q \times \mathbb{R} \rightarrow \mathbb{R}$ is $K$-Lipschitz with respect to its first component:
\[\quad \forall x, y \in \mathbb{R}^q,  \forall t\in \mathbb{R},  \vert f(x,t) - f(y,t) \vert \leq K \| x - y \|_2\]
\end{assumption}
\begin{remark}
Under Assumption~\ref{hyp_lip}, notice that $f_i : \mathbb{R}^q  \rightarrow \mathbb{R}^{m_i}$ is $\sqrt{K^2 m_i}$-Lipschitz for $\| \cdot \|_2$. 
\end{remark}
As outlined in the introduction, to satisfy the condition of prior mass around the true parameters, they should be confined within a specific subset of the parameter space characterized by bounded norms. Also, it is assumed that $\beta_0$ is sparse and not the zero vector, and that $p$ does not diverge faster than exponential of $n$.

\begin{assumption}
    \label{hyp_beta0}
    The true value $\beta_0$ belongs to
    $$\mathcal{B}_0:=\left\{\beta_0 \in \mathcal{B}\backslash\{0\} : \|\beta_0\|_{\infty} \lesssim \lambda^{-1} \log(p), s_0\log(p)=o(n)\right\},$$ where $\lambda$ is the regularization parameter of the Laplace distribution defined in Equation~\eqref{eq_loi_Laplace} and satisfying Assumption \ref{hyp_lambda} below, and $s_0$ is the true support size. 
\end{assumption}

\begin{assumption}
    \label{hyp_gamma0}
    The true covariance matrix of the random effects $\Gamma_0$ belongs to $$\mathcal{H}_0~:= ~\left\{\Gamma \in \mathcal{H}: 1 \lesssim \rho_{min}(\Gamma) \leq \rho_{max}(\Gamma) \lesssim 1 \right\}, $$ and we denote $\underline{\rho_{\Gamma_0}}>0$ and $\overline{\rho_{\Gamma_0}}>0$ the bounds such that: $\underline{\rho_{\Gamma_0}}~ \leq ~ \rho_{min}(\Gamma_0) \leq \rho_{max}(\Gamma_0) \leq \overline{\rho_{\Gamma_0}}$.
\end{assumption}
Assumption \ref{hyp_beta0} allows that the prior assigns sufficient mass on a Kullback-Leibler neighborhood of $\beta_0$. In the same way, assumption \ref{hyp_gamma0} enables to put sufficient mass around the true parameter $\Gamma_0$ in terms of Frobenius norm. Similar conditions can be found in the work of \citet{ning_bayesian_2020}, \citet{jeong_unified_2021}, and \citet{song2023nearly}. This is in contrast to \citet{castillo_bayesian_2015}'s work where they obtain a result uniformly over the entire parameter space because they have explicit expressions to satisfy this condition directly in their case of univariate regression with known variance. 

\subsubsection{Assumptions on the prior distributions}

The importance of the prior $\pi_p$ lies in its essential role in representing the sparsity of the parameter. The crucial aspect of the prior $\pi_p$ on model dimension is to appropriately reduce the influence of larger models while maintaining sufficient weight for the true one. It is revealed that an exponential decrease effectively fulfills this requirement \citep{castillo_bayesian_2015}. The following assumption is made on $\pi_p$. 
\begin{assumption}[Prior dimension]
\label{hyp_pi_p}
For some constants $A_1$, $A_2$, $A_3$,  $A_4 >0$,  
\[A_1 p^{-A_3} \pi_p(s-1) \leq \pi_p(s) \leq A_2 p^{-A_4} \pi_p(s-1) \quad \text{ ,  for } s=1, \dots, \db.\]
\end{assumption}
Examples of priors satisfying this assumption \ref{hyp_pi_p} are given in \citet{castillo} and \citet{castillo_bayesian_2015}. In fact, this type of prior is more generic than the discrete spike-and-slab prior. Indeed, if each coordinate $\beta_{\ell}$ is modeled as a mixture $(1-w)\delta_0 + w G$, where $G$ follows the Laplace distribution, it can be realized as a prior of the form \eqref{prior_S_beta} by selecting $\pi_p$ as a binomial distribution with parameters $\db$ and $w$. Since $w$ controls the level of sparsity, which is unknown, a classical Bayesian strategy is to put a hyper-prior $Beta(1, (\db)^u)$ with $u>1$. Then, the overall prior satisfies the exponential decay rate \ref{hyp_pi_p}.  
Furthermore, the regularization parameter of the Laplace prior $\lambda$ must be bounded from below and above, as specified in the following assumption. Indeed, an excessively large value of $\lambda$ would shrink non-zero coordinates of $\beta$ towards 0, which is undesirable. Conversely, a too small value of $\lambda$ may introduce false signals in the support, thereby slowing down the posterior contraction rate.
\begin{assumption}
    \label{hyp_lambda}
    The regularization parameter $\lambda$ of the Laplace prior on the non-zero coordinates of $\beta$ satisfies: $$\dfrac{ \|X\|_{*} K'}{L_1p^{L_2}}\leq \lambda \leq \dfrac{L_3 \|X\|_{*} K'}{\sqrt{n}},$$ for some constants $L_1$, $L_2$, $L_3>0$, where $K'=\sqrt{K^2 M_{\text{obs}}}$ and $X = \begin{pmatrix}
X_1 \\
\vdots \\
X_n \\
\end{pmatrix} \in\mathbb{R}^{nq \times \db}$.
\end{assumption}

\begin{remark}
    By defining $V$ as $V = \begin{pmatrix}
V_1^\top \\
\vdots \\
V_n^\top \\
\end{pmatrix} \in\mathbb{R}^{n \times p}$ the covariates matrix, note that $\|X\|_{*}=\|V\|_{*}$.
\end{remark}

Similar condition can be found in \citet{jeong_unified_2021} for example. Note that, since the size of signal in $\beta_0$ is restricted (assumption \ref{hyp_beta0}), the Laplace density is not required to achieve the posterior contraction rates in Theorem \ref{Th_beta}. Other slab densities with similar tail properties, such as the Gaussian slab, can also be used with appropriate adjustments for the true signal size (see \citet{jeong_posterior_2021,jeong_unified_2021} for more details).

\subsubsection{Assumptions about the experimental design}

For $\Gamma_1, \Gamma_2 \in \mathcal{H}$, we define the pseudo distance $$d_n^2(\Gamma_1, \Gamma_2)=\dfrac{1}{n} \sum_{i=1}^n \| \Delta_{\Gamma_1,i}-\Delta_{\Gamma_2,i} \|_F^2 = \dfrac{1}{n} \sum_{i=1}^n \| Z_i(\Gamma_1-\Gamma_2)Z_i^\top \|_F^2,$$ where $\Delta_{\Gamma,i}$ is defined in equation \eqref{NLMM_compact}. The following three assumptions on the model \ref{hyp_ni_sup_q}, \ref{hyp_rg_Zi}, \ref{hyp_Zi_sp} enable to control the maximum Frobenius norm of the difference between covariance matrices from the average Frobenius norm:
$$\max_i \|\Delta_{\Gamma_1,i}-\Delta_{\Gamma_2,i} \|_F^2 \lesssim  \|\Gamma_1-\Gamma_2 \|_F^2 \lesssim d_n^2(\Gamma_1,\Gamma_2)=\frac{1}{n}\sum_{i=1}^n \|\Delta_{\Gamma_1,i}-\Delta_{\Gamma_2,i}\|_F^2.$$
This point is demonstrated in Appendix \ref{appB}.

\begin{assumption}
    \label{hyp_ni_sup_q}
    The quantity $\sum_{i=1}^n \mathds{1}_{m_i \geq \dg}$ satisfies: $\sum_{i=1}^n \mathds{1}_{m_i \geq \dg} \asymp n$.
\end{assumption}
Assumption \ref{hyp_ni_sup_q} means that the number of individuals $i$ such as the number of observations $m_i$ is greater than the number of random effects $\dg$ is of the order of $n$, that is $m_i$ is probably greater than $\dg$, which seems statistically reasonable to be able to estimate $\Gamma$ and $\beta$. This assumption contributes to the model's identifiability.

\begin{assumption}
    \label{hyp_rg_Zi}
    For each $1\leq i \leq n$ such that $m_i \geq \dg$, $Z_i$ is of full rank, \textit{i.e.} $\min_i \left\{ \rho_{min}^{1/2}(Z_i^\top Z_i) : m_i \geq \dg \right\} \gtrsim  1$. We denote by $\underline{\rho_Z}$ the bound: $$\min _i \left\{ \rho_{min}^{1/2}(Z_i^\top Z_i) : m_i \geq \dg \right\} \geq \underline{\rho_Z}.$$
\end{assumption}

\begin{assumption}
    \label{hyp_Zi_sp}
    For each $1 \leq i \leq n$, the maximum of $\| Z_i\| _{sp}$ is bounded, \textit{i.e.} $\max _i \| Z_i\| _{sp}\lesssim 1$. We denote by $\overline{\rho_Z}$ the bound: $$\max _i \| Z_i\| _{sp} \leq \overline{\rho_Z}.$$
\end{assumption}
Similar assumptions can be found in Theorem $10$ of \citet{jeong_unified_2021} for the linear mixed-effects model.

\section{Posterior contraction results}
\label{sec_results}

In this section, we provide results on posterior contraction rates in sparse non-linear marginal mixed-effects model under suitable assumptions presented in Section~\ref{sec::ass}. To achieve this, we first analyze a dimensionality property of the support of $\beta$. Then, we determine how quickly the posterior contracts based on the average Rényi divergence. Finally, we use this information about Rényi contraction to establish the rates for the parameters relative to more practical metrics.

\subsection{Support size control}
\label{sec_supp_size}

First, it is essential to examine the support size of $\beta$ in order to then focus on models of relatively small sizes. The following theorem shows that the posterior distribution tends to concentrate on models of relatively small sizes, not much larger than the true one.
\begin{theorem}[Effective dimension]
\label{Th_dim}
In model \eqref{NLMM}, with prior specifications outlined in Section \ref{sec_prior}, and assuming the validity of previous assumptions \ref{hyp_lip}-\ref{hyp_Zi_sp}, there exists a constant $C_1>0$ such that the following convergence holds:
\[ \underset{\beta_0 \in \mathcal{B}_0, \Gamma_0 \in \mathcal{H}_0}{\sup} \mathbb{E}_{0}\left[\Pi\left(\beta : |S_{\beta}|> C_1 s_0 \bigg| Y^{(n)}\right)\right] \underset{n \rightarrow \infty}{\longrightarrow} 0.\]
\end{theorem}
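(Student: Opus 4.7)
The plan is to bound the posterior probability of interest using the standard ratio decomposition,
\[
\Pi\bigl(|S_\beta|>C_1 s_0 \,\big|\, Y^{(n)}\bigr) = \frac{N_n}{D_n}, \quad N_n = \int_{|S_\beta|>C_1 s_0} \frac{p_{\beta,\Gamma}(Y^{(n)})}{p_{\beta_0,\Gamma_0}(Y^{(n)})}\, d\Pi(\beta,\Gamma),
\]
with $D_n$ the same integral over the whole parameter space. I will lower bound $D_n$ on an event $\Omega_n$ of probability tending to one, upper bound $\mathbb{E}_0[N_n]$ via Fubini and the unit expectation of the likelihood ratio, and then combine via Markov's inequality, verifying that $\mathbb{E}_0[N_n]/\inf_{\Omega_n} D_n \to 0$ uniformly over $(\beta_0,\Gamma_0)\in \mathcal{B}_0\times \mathcal{H}_0$ for $C_1$ large enough.

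For the denominator, I would construct a Kullback--Leibler neighborhood $B_n$ of $(\beta_0,\Gamma_0)$ of the form $\{\beta: S_\beta\subseteq S_0,\ \|\beta-\beta_0\|_1\leq r_n\}\times \{\Gamma: \|\Gamma-\Gamma_0\|_F\leq \eta_n\}$, with $r_n,\eta_n$ tuned so that both the Gaussian KL divergence $\mathrm{KL}(p_{\beta_0,\Gamma_0},p_{\beta,\Gamma})$ and its log-variance are of order $n\varepsilon_n^2$ with $\varepsilon_n^2\asymp s_0\log(p)/n$. The mean contribution is reduced to a multiple of $K^2 J_n \|X(\beta-\beta_0)\|_2^2$ via assumption~\ref{hyp_lip} and the Lipschitz remark, and further to $K_n^2 \|X\|_*^2 \|\beta-\beta_0\|_1^2$ by the column-norm inequality $\|Xu\|_2\leq \|X\|_* \|u\|_1$, which matches the scaling built into assumption~\ref{hyp_lambda}; the covariance contribution is handled through assumptions~\ref{hyp_gamma0} and \ref{hyp_ni_sup_q}--\ref{hyp_Zi_sp}, which together translate $\|\Gamma-\Gamma_0\|_F$ into control of $\|\Delta_{\Gamma,i}-\Delta_{\Gamma_0,i}\|_F$ and bound the eigenvalues of $\Delta_{\Gamma_0,i}$ from above and below. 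A standard evidence-lower-bound lemma (Ghosal--van der Vaart type) then yields $D_n\geq \Pi(B_n)\exp(-c n\varepsilon_n^2)$ on $\Omega_n$ with $\mathbb{P}_0(\Omega_n^c)\to 0$. The prior mass $\Pi(B_n)$ factors as $\bigl(\pi_p(s_0)/\binom{p}{s_0}\bigr)$ times a Laplace piece and an inverse-Wishart piece; the Laplace piece stays tractable because $\lambda\|\beta_0\|_1 \leq s_0\lambda\|\beta_0\|_\infty \lesssim s_0\log(p)$ by assumptions~\ref{hyp_beta0}--\ref{hyp_lambda}, and the inverse-Wishart piece is bounded below uniformly in $\Gamma_0\in\mathcal{H}_0$ since $q$ is fixed.

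For the numerator, Fubini and $\mathbb{E}_0[p_{\beta,\Gamma}/p_{\beta_0,\Gamma_0}](Y^{(n)})=1$ give $\mathbb{E}_0[N_n]\leq \Pi(|S_\beta|>C_1 s_0) = \sum_{s>C_1 s_0}\pi_p(s)$, and iterating assumption~\ref{hyp_pi_p} yields $\pi_p(s)\leq (A_2 p^{-A_4})^{s-s_0}\pi_p(s_0)$ for $s\geq s_0$, whence $\mathbb{E}_0[N_n]\lesssim \pi_p(s_0)(A_2 p^{-A_4})^{(C_1-1)s_0}$. Plugging this into the denominator bound and using $\binom{p}{s_0}\leq p^{s_0}$, the controlling quantity has the form $\exp\bigl(s_0\log(p)\,[1+c''-A_4(C_1-1)] + c n\varepsilon_n^2\bigr)$; since $n\varepsilon_n^2\asymp s_0\log(p)$ and $s_0\log(p)=o(n)$ by assumption~\ref{hyp_p}, choosing $C_1$ such that $A_4(C_1-1)$ dominates all accumulated constants drives the whole quantity to zero, uniformly in $(\beta_0,\Gamma_0)$ because every estimate above was in terms of the uniform bounds supplied by assumptions~\ref{hyp_beta0} and \ref{hyp_gamma0}.

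The main obstacle I anticipate is the denominator step: unlike the sparse linear regression with known variance treated in \cite{castillo_bayesian_2015}, here the Gaussian KL formula simultaneously couples a non-linear mean contribution (governed by assumption~\ref{hyp_lip}) and a covariance contribution (requiring the design-side control from \ref{hyp_ni_sup_q}--\ref{hyp_Zi_sp}), while the inverse-Wishart tail mass on a Frobenius ball around $\Gamma_0$ must also be shown sufficient. This step will likely require a dedicated KL-neighborhood lemma, patterned on those of \cite{ning_bayesian_2020} and \cite{jeong_unified_2021}, but adapted to the non-linear marginal mixed-effects structure of \eqref{NLMM}.
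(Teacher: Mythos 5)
Your proposal is correct and matches the paper's argument in all essential respects: the same Bayes-ratio decomposition, a denominator lower bound via a Kullback--Leibler neighborhood combined with a Ghosal--van der Vaart evidence-lower-bound lemma (the paper's Lemma B1, which the paper states as Lemma~\ref{Lem_denom}), the Fubini/unit-expectation numerator bound reducing to $\Pi(|S_\beta|>C_1 s_0)$, the iteration of Assumption~\ref{hyp_pi_p} for geometric prior decay, and the final choice of $C_1$ large to make the exponent negative. The only cosmetic difference is your choice of radius $n\varepsilon_n^2\asymp s_0\log p$ for the KL neighborhood versus the paper's $c_1\log n$; both lead to the same exponential accounting since $\log n\lesssim s_0\log p$ in the regime $p>n$, $s_0\geq 1$.
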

Proof of this theorem is provided in Section \ref{proof_Th1}. The derivation of the posterior contraction rate heavily relies on a technical lemma which provides a lower bound for the denominator of the posterior distribution with probability tending to $1$, see Lemma \ref{Lem_denom} in Section \ref{proof_Th1}. More precisely, this lemma is employed in deriving our main results on effective dimension and posterior contraction rates, as outlined in Theorems \ref{Th_dim} and \ref{Th_Renyi}.

\subsection{Posterior contraction rates}

As discussed in the introduction, the classical approach for determining posterior contraction rates encounters limitations when dealing with the unknown nature of the random effects covariance matrix. Indeed, this approach based on the average squared Hellinger distance faces inadequacies in obtaining rates in terms of the Euclidean norm for the parameters in this context. Specifically, the issue arises from the fact that establishing proximity using the average squared Hellinger distance between multivariate normal densities with individual-specific mean and an unknown covariance does not guarantee average proximity in terms of the Euclidean distance for the mean parameters in these densities. To overcome this challenge, the proposed solution is a direct utilization of the average Rényi divergence of order 1/2 (see Definition \ref{def_Rényi}). This approach is highlighted for its high manageability in the context of multivariate normal distributions and its ability to ensure closeness in terms of the desired Euclidean distance. Examples of the application of this theory can be found in the works of \citet{ning_bayesian_2020} and \citet{jeong_unified_2021}, further supporting the efficacy of the average Rényi divergence in overcoming the limitations associated with the unknown covariance matrix for random effects.

\begin{definition}
\label{def_Rényi}
    For two $n$-variates densities $f=\prod_{i=1}^n f_i$ and $g=\prod_{i=1}^n g_i$ of independent variables, the average Rényi divergence (of order 1/2) is defined by:  $$R_n(f,g)=-\dfrac{1}{n}\sum_{i=1}^n \log \left( \int \sqrt{f_i g_i} \right)$$
\end{definition}
Based on the result of Theorem \ref{Th_dim}, the following theorem establishes the rate of contraction of the posterior distribution towards the truth with respect to the average Rényi divergence.

\begin{theorem}[Contraction rate, Rényi]
\label{Th_Renyi}
In model \eqref{NLMM}, with prior specifications outlined in Section \ref{sec_prior}, we denote by $p_{\beta, \Gamma}=\prod_{i=1}^n p_{\beta, \Gamma,i}$ the joint density, with $p_{\beta, \Gamma,i}$ representing the density of the $i$th observation vector $y_i$, and $p_0$ representing the true joint density. Assuming the previous assumptions \ref{hyp_lip}-\ref{hyp_Zi_sp} hold, then there exists a constant $C_2>0$ such that:
$$\underset{\beta_0 \in \mathcal{B}_0, \Gamma_0 \in \mathcal{H}_0}{\sup} \mathbb{E}_{0}\left[\Pi\left((\beta,\Gamma) \in \mathcal{B} \times \mathcal{H} : R_n(p_{\beta, \Gamma},p_0) > C_2 \frac{s_0 \log(p)}{n} \bigg| Y^{(n)}\right)\right] \underset{n \rightarrow \infty}{\longrightarrow} 0.$$
\end{theorem}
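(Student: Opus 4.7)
The plan is to apply the general posterior-contraction framework of Ghosal--Ghosh--van der Vaart, adapted to the average Rényi divergence by \cite{ning_bayesian_2020} and \cite{jeong_unified_2021}. Writing $\varepsilon_n^2 = s_0 \log(p)/n$, I use the standard decomposition
\begin{align*}
\mathbb{E}_0 \Pi\bigl(R_n(p_{\beta,\Gamma},p_0) > C_2 \varepsilon_n^2 \mid Y^{(n)}\bigr)
&\leq \mathbb{E}_0[\phi_n] + \mathbb{P}_0(D_n) \\
&\quad + e^{C n \varepsilon_n^2}\, \Pi(\mathcal{F}_n^c) + e^{C n \varepsilon_n^2}\sup_{\mathcal{F}_n \cap \{R_n > C_2 \varepsilon_n^2\}} \mathbb{E}_{\beta,\Gamma}[1-\phi_n],
\end{align*}
where $D_n$ is the event that the posterior denominator $\int (p_{\beta,\Gamma}/p_0)\, d\Pi$ falls below $e^{-C n\varepsilon_n^2}$, $\mathcal{F}_n$ is a sieve, and $\phi_n$ is a uniform test. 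By Lemma \ref{Lem_denom} (already invoked for Theorem \ref{Th_dim}), $\mathbb{P}_0(D_n) \to 0$, so three tasks remain: build $\mathcal{F}_n$, bound $\Pi(\mathcal{F}_n^c)$, and construct $\phi_n$.

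Using Theorem \ref{Th_dim}, I restrict attention to supports of size at most $C_1 s_0$. I take
$$\mathcal{F}_n = \bigcup_{|S| \leq C_1 s_0} \Bigl\{(\beta,\Gamma) \in \mathcal{B}\times\mathcal{H} : S_\beta \subseteq S,\ \|\beta\|_\infty \leq M_n,\ \underline{\rho}_n \leq \rho_{min}(\Gamma) \leq \rho_{max}(\Gamma) \leq \overline{\rho}_n\Bigr\},$$
with $M_n$ a polynomial in $p$ and $\underline{\rho}_n, \overline{\rho}_n$ tuned from the inverse-Wishart quantiles. The Laplace-tail bound on $\beta_S$ together with Assumptions \ref{hyp_lambda} and \ref{hyp_pi_p}, the exponential tails of the eigenvalues of an inverse-Wishart matrix, and the condition $\log J_n \lesssim \log p$ (used to absorb $\|X\|_{*}$ and $K_n$ factors into $\log p$) imply $\Pi(\mathcal{F}_n^c) = o(e^{-(C+2)n\varepsilon_n^2})$ after choosing the constants large enough.

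For the tests, I exploit the closed form of the Rényi divergence between Gaussians:
$$-\log \int\!\sqrt{p_{\beta,\Gamma,i}\, p_{\beta_0,\Gamma_0,i}} \;=\; \tfrac{1}{8}\bigl(f_i(X_i\beta)-f_i(X_i\beta_0)\bigr)^\top \bar\Delta_i^{-1} \bigl(f_i(X_i\beta)-f_i(X_i\beta_0)\bigr) + \tfrac{1}{2}\log \frac{|\bar\Delta_i|}{|\Delta_{\Gamma,i}|^{1/2}\,|\Delta_{\Gamma_0,i}|^{1/2}},$$
where $\bar\Delta_i = (\Delta_{\Gamma,i}+\Delta_{\Gamma_0,i})/2$. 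On $\mathcal{F}_n$, Assumptions \ref{hyp_ni_sup_q}--\ref{hyp_Zi_sp} ensure that $\bar\Delta_i$ has eigenvalues uniformly bounded above and below, so $R_n$ is comparable, up to multiplicative constants, to $\frac{1}{n}\sum_i \|f_i(X_i\beta) - f_i(X_i\beta_0)\|_2^2$ plus a term equivalent to $d_n^2(\Gamma,\Gamma_0) \lesssim \|\Gamma-\Gamma_0\|_F^2$ via Lemma \ref{Lem_norm_Gamma}. The single-point Bhattacharyya test $\phi_{(\beta_1,\Gamma_1)} = \mathbbm{1}\{p_{\beta_1,\Gamma_1} > p_0\}$ achieves $\mathbb{E}_0[\phi] + \mathbb{E}_{\beta_1,\Gamma_1}[1-\phi] \leq 2 e^{-n R_n(p_{\beta_1,\Gamma_1},p_0)}$ by Cauchy--Schwarz; I extend it to a uniform test by a covering argument in the Rényi pseudo-metric. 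By Assumption \ref{hyp_lip}, a Euclidean cover of $\{\beta_S : \|\beta_S\|_\infty \leq M_n\}$ at scale $\eta/(\|X\|_{*} K_n)$ produces a cover of the mean vectors at scale $\eta$, while a Frobenius cover on $\{\Gamma : \underline{\rho}_n \leq \rho_{min}(\Gamma), \rho_{max}(\Gamma) \leq \overline{\rho}_n\}$ handles the covariance component. Multiplying by the $\binom{p}{C_1 s_0} \leq e^{C_1 s_0 \log p}$ choices of support yields a log-covering number of order $s_0 \log p = n\varepsilon_n^2$.

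The main obstacle is the calibration of all the constants simultaneously. Propagating the Lipschitz bound through $f_i(X_i\beta)$ introduces the factor $\|X\|_{*} K_n$, which must enter only logarithmically in the entropy; Assumption \ref{hyp_lambda} and $\log J_n \lesssim \log p$ are tailored precisely to allow this absorption. At the same time, the sieve bounds $M_n, \underline{\rho}_n, \overline{\rho}_n$ must be large enough that $\Pi(\mathcal{F}_n^c) = o(e^{-(C+2)n\varepsilon_n^2})$ yet tight enough that the uniform test error is $e^{-c n\varepsilon_n^2}$ with $c > C$. This calibration follows the template of \cite{jeong_unified_2021} but must be redone because the mean $f_i(X_i\beta)$ is non-linear; the key subtlety is that Assumption \ref{hyp_lip} only gives an upper Lipschitz bound, so the Rényi divergence between the truth and $(\beta,\Gamma)$ on $\mathcal{F}_n$ is controlled from above by $\|X(\beta-\beta_0)\|$ and $\|\Gamma-\Gamma_0\|_F$, which is exactly what is needed for covering but leaves the lower inequality (useful for parameter recovery) to the subsequent theorems.
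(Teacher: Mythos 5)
Your overall architecture matches the paper's: Bayes decomposition, denominator bound via Lemma~\ref{Lem_denom}, restriction to $s_\beta\leq C_1s_0$ via Theorem~\ref{Th_dim}, a sieve with polynomial bounds on $\|\beta\|_\infty$ and polynomial/exponential bounds on the eigenvalues of $\Gamma$, prior mass bounds on the sieve complement via Laplace and inverse-Wishart tails, and a likelihood-ratio test combined with a covering argument (the paper invokes Lemma D.3 of \cite{ghosal2017fundamentals}). That much is sound.

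The genuine gap is in the step from a single-point test to a test that is uniformly powerful over a ball of alternatives. You write that the Bhattacharyya test $\phi_{(\beta_1,\Gamma_1)}=\mathds{1}\{p_{\beta_1,\Gamma_1}>p_0\}$ has small Type~I and Type~II error \emph{at} $(\beta_1,\Gamma_1)$, and then that ``a covering argument in the Rényi pseudo-metric'' extends this to a uniform test. That is not a complete argument: a covering argument only helps if the \emph{same} fixed test $\phi_{(\beta_1,\Gamma_1)}$ retains exponentially small Type~II error for \emph{every} $(\beta,\Gamma)$ in a Euclidean/Frobenius ball around $(\beta_1,\Gamma_1)$, which is not automatic and does not follow from the one-sided Lipschitz bound on $f$ alone. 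The paper's Lemma~\ref{Lem_test} is exactly this missing ingredient: it writes $\mathbb{E}_{\beta,\Gamma}[1-\overline\varphi_n]\leq \mathbb{E}_1[1-\overline\varphi_n]^{1/2}\,\mathbb{E}_1\bigl[(p_{\beta,\Gamma}/p_1)^2\bigr]^{1/2}$ by Cauchy--Schwarz and then controls the second moment explicitly using the Gaussian form of $p_{\beta,\Gamma}$ and $p_1$. That computation is delicate --- one must show that $2\Delta_{\Gamma,i}^{*}-I$ and $2I-\Delta_{\Gamma,i}^{*-1}$ are non-singular on the ball $\mathcal{F}_{1,n}$, obtain the determinant/exponential expression in equation~\eqref{eq_esp}, and then calibrate the ball radii (involving $\gamma_n\geq\max_i\|\Delta_{\Gamma,i}^{-1}\|_{sp}$ and $J_n$) so the second moment stays below $e^{7n\epsilon_n^2/8}$. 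Your proposal treats this as a routine covering step and never produces the bound; without it the argument does not close. Note also that your assertion that Assumptions~\ref{hyp_ni_sup_q}--\ref{hyp_Zi_sp} give uniformly bounded eigenvalues for $\bar\Delta_i$ \emph{on the sieve} is only half-right: $\rho_{\min}(\bar\Delta_i)\geq\sigma^2$ uniformly, but $\rho_{\max}(\bar\Delta_i)$ can grow like $e^{Mn\epsilon_n^2}$ on $\mathcal{H}_n$, so $R_n$ is \emph{not} comparable to $\frac{1}{n}\sum_i\|f_i(X_i\beta)-f_i(X_i\beta_0)\|_2^2+d_n^2(\Gamma,\Gamma_0)$ up to constants across the whole sieve; only the one-sided upper comparison of $R_n$ by the concrete metrics (what is actually needed to relate covering radii) holds there.
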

The proof can be found in Section \ref{Proof_Th2}. This proof is based on the general theory of posterior contraction rate of \citet{ghosal_convergence_2000,ghosal_convergence_2007,ghosal2017fundamentals}, which relies on the construction and existence of exponentially powerful tests (see also \citet{castillo2024nonparam} for more details).

While Theorem \ref{Th_Renyi} provides a fundamental result on posterior contraction, it does not offer precise interpretations for the parameters $\beta$ and $\Gamma$. The following theorem relies on the form of the average Rényi divergence to obtain more concrete contraction rates. Specifically, it demonstrates that the posterior distribution of the prediction term and $\Gamma$ contracts towards their true respective values at certain rates, relative to metrics more easily understandable than the average Rényi divergence.

\begin{theorem}[Recovery]
\label{Th_recovery}
    In model \eqref{NLMM}, with prior specifications outlined in Section \ref{sec_prior}, and assuming Assumptions \ref{hyp_lip}-\ref{hyp_Zi_sp} , then there exist constants $C_3,C_4,C_5>0$ such that:
    $$\underset{\beta_0 \in \mathcal{B}_0, \Gamma_0 \in \mathcal{H}_0}{\sup} \mathbb{E}_{0}\left[\Pi\left(\Gamma : d_n(\Gamma,\Gamma_0) > C_3 \sqrt{\frac{s_0 \log(p)}{n}} \bigg| Y^{(n)}\right)\right] \underset{n \rightarrow \infty}{\longrightarrow} 0,$$
    $$\underset{\beta_0 \in \mathcal{B}_0, \Gamma_0 \in \mathcal{H}_0}{\sup} \mathbb{E}_{0}\left[\Pi\left(\Gamma : \|\Gamma - \Gamma_0\|_F > C_4 \sqrt{\frac{s_0 \log(p)}{n}} \bigg| Y^{(n)}\right)\right] \underset{n \rightarrow \infty}{\longrightarrow} 0,$$
    $$\underset{\beta_0 \in \mathcal{B}_0, \Gamma_0 \in \mathcal{H}_0}{\sup} \mathbb{E}_{0}\left[\Pi\left(\beta : \mathcal{P}_n > C_5 \sqrt{\frac{s_0 \log(p)}{n}} \bigg| Y^{(n)}\right)\right] \underset{n \rightarrow \infty}{\longrightarrow} 0,$$
    where $\mathcal{P}_n=\sqrt{\frac{1}{n}\sum_{i=1}^n \|f_i(X_i\beta)-f_i(X_i\beta_0)\|_2^2}$.
\end{theorem}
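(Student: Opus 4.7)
The plan is to derive Theorem~\ref{Th_recovery} as a direct consequence of Theorem~\ref{Th_Renyi}, by expanding the average Rényi divergence explicitly for Gaussian densities and separately reading off bounds on the mean (prediction) and covariance parts. Since $p_{\beta,\Gamma,i}$ and $p_{0,i}$ are both multivariate normal densities, the standard closed form gives
\begin{equation*}
-\log\!\int \!\sqrt{p_{\beta,\Gamma,i}\, p_{0,i}} \;=\; \tfrac{1}{8}\,(f_i(X_i\beta)-f_i(X_i\beta_0))^{\top}\bar\Delta_i^{-1}(f_i(X_i\beta)-f_i(X_i\beta_0)) \;+\; \tfrac{1}{2}\log\frac{|\bar\Delta_i|}{|\Delta_{\Gamma,i}|^{1/2}|\Delta_{\Gamma_0,i}|^{1/2}},
\end{equation*}
with $\bar\Delta_i=(\Delta_{\Gamma,i}+\Delta_{\Gamma_0,i})/2$. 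Both summands are non-negative, so averaging over $i$ and using Theorem~\ref{Th_Renyi}, the posterior mass at $R_n(p_{\beta,\Gamma},p_0)\le C_2 s_0\log(p)/n$ implies that each of the two averaged terms is $O(s_0\log(p)/n)$ with posterior probability tending to one.

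Next I would restrict attention to a set on which $\rho_{\min}(\Gamma)$ and $\rho_{\max}(\Gamma)$ are bounded away from $0$ and $\infty$. This set carries posterior mass tending to one: combining the Rényi contraction with Assumption~\ref{hyp_gamma0} and Lemma~\ref{Lem_norm_Gamma} (referenced in Section~\ref{sec::ass}), any $\Gamma$ with eigenvalues outside a fixed compact interval would force $\|\Gamma-\Gamma_0\|_F$, hence $d_n(\Gamma,\Gamma_0)$, hence the log-determinant term in the Rényi divergence, to exceed the allowed order. On this set, using Assumption~\ref{hyp_Zi_sp}, $\rho_{\max}(\Delta_{\Gamma,i})\lesssim 1$ and $\rho_{\min}(\Delta_{\Gamma,i})\ge \sigma^{2}$, so $\bar\Delta_i$ inherits two-sided bounds uniform in $i$. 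Then the inequality
\begin{equation*}
(f_i(X_i\beta)-f_i(X_i\beta_0))^{\top}\bar\Delta_i^{-1}(f_i(X_i\beta)-f_i(X_i\beta_0)) \;\ge\; \frac{1}{\rho_{\max}(\bar\Delta_i)}\,\|f_i(X_i\beta)-f_i(X_i\beta_0)\|_2^{2}
\end{equation*}
immediately upgrades the mean-part bound to the desired $n^{-1}\sum_i\|f_i(X_i\beta)-f_i(X_i\beta_0)\|_2^{2}\lesssim s_0\log(p)/n$, yielding the third claim of the theorem.

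For the covariance claims, the same two-sided eigenvalue control lets me invoke a standard log-determinant inequality of the form $\log(|(A+B)/2|/(|A|^{1/2}|B|^{1/2})) \gtrsim \|A-B\|_F^{2}$, valid whenever $A$ and $B$ have eigenvalues in a fixed compact interval (this is the identity used in \cite{ning_bayesian_2020} and \cite{jeong_unified_2021} for the same purpose). Summing over $i$ then bounds $d_n^{2}(\Gamma,\Gamma_0)$ by a constant times the scale part of the Rényi divergence, giving the first statement. The second statement follows from the first by applying Lemma~\ref{Lem_norm_Gamma}, which supplies the reverse inequality $\|\Gamma-\Gamma_0\|_F^{2}\lesssim d_n^{2}(\Gamma,\Gamma_0)$ under Assumptions~\ref{hyp_ni_sup_q}, \ref{hyp_rg_Zi} and~\ref{hyp_Zi_sp}.

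The main obstacle is the uniform control on the eigenvalues of $\Delta_{\Gamma,i}$ needed to make both lower bounds effective, since Theorem~\ref{Th_Renyi} is stated on the whole parameter space $\mathcal{B}\times\mathcal{H}$. I would handle it by first showing that the posterior concentrates on a sieve $\{\Gamma:\underline{c}\le \rho_{\min}(\Gamma)\le\rho_{\max}(\Gamma)\le\overline{c}\}$ for suitable constants: the lower bound follows from the exponential tails of the inverse-Wishart prior and the existence of exponentially powerful tests (already built in the proof of Theorem~\ref{Th_Renyi}), while the upper bound is a consequence of the Rényi contraction itself via the log-determinant term. Once this sieve has full asymptotic posterior mass, the plan above goes through and the three statements of Theorem~\ref{Th_recovery} follow by a union bound with the event appearing in Theorem~\ref{Th_Renyi}.
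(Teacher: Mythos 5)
Your overall plan --- expand the Rényi divergence into a non-negative mean part and a non-negative covariance part, then read off the three rates --- matches the paper's starting point exactly. Where you diverge is in how you handle the lack of a priori eigenvalue control on $\Delta_{\Gamma,i}$. You propose an eigenvalue sieve $\{\Gamma : \rho_{\min}(\Gamma), \rho_{\max}(\Gamma)\in[\underline{c},\overline{c}]\}$ with full asymptotic posterior mass, and then apply a log-determinant inequality pointwise over all $i$; the paper instead stays on the Rényi-contraction set, applies Lemma~10 of \cite{jeong_unified_2021} (which requires only $g^2(\Delta_{\Gamma,i},\Delta_{\Gamma_0,i})$ to be small and whose constant depends on $\rho_{\min}(\Delta_{\Gamma_0,i})$, not on $\Delta_{\Gamma,i}$), and handles the few indices where $g^2$ is not small through the set $I_{n,\delta}=\{i : g^2(\Delta_{\Gamma,i},\Delta_{\Gamma_0,i})\geq\delta\}$, whose cardinality is $O(n\epsilon_n^2)$. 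The sieve approach is conceptually cleaner, the $I_{n,\delta}$ approach avoids having to establish a separate sieve result, and both give the same rates.

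There is, however, a genuine gap in how you justify the sieve. You argue: eigenvalues of $\Gamma$ outside a compact interval $\Rightarrow$ $\|\Gamma-\Gamma_0\|_F$ large $\Rightarrow$ (Lemma~\ref{Lem_norm_Gamma}) $d_n(\Gamma,\Gamma_0)$ large $\Rightarrow$ log-determinant term of the Rényi divergence large. The last implication is precisely the reverse of what you need the sieve to prove (``log-det small $\Rightarrow d_n$ small''), so as written the argument is circular. The correct way to establish the sieve is direct: for $\rho_{\max}(\Gamma)>\overline{c}$ and any $i$ with $n_i\geq q$, Assumption~\ref{hyp_rg_Zi} gives $\rho_{\max}(\Delta_{\Gamma,i})\geq\underline{\rho_Z}^2\rho_{\max}(\Gamma)+\sigma^2$, so the matrix $\Delta_{\Gamma_0,i}^{-1/2}\Delta_{\Gamma,i}\Delta_{\Gamma_0,i}^{-1/2}$ has an eigenvalue $\gtrsim\overline{c}$, hence $g^2(\Delta_{\Gamma,i},\Delta_{\Gamma_0,i})\geq 1-\delta(\overline{c})$ directly from the explicit determinant formula; Assumption~\ref{hyp_ni_sup_q} then gives a positive fraction of such $i$, so $-\frac{1}{n}\sum_i\log(1-g_i^2)$ is bounded below by a fixed positive constant, which exceeds $C_2\epsilon_n^2$ for $n$ large. (The chain must not pass through $\|\Gamma-\Gamma_0\|_F$ at all for the lower-eigenvalue direction: $\rho_{\min}(\Gamma)\to 0$ does \emph{not} force $\|\Gamma-\Gamma_0\|_F$ to be large, but fortunately this direction is unnecessary anyway, since $\rho_{\min}(\Delta_{\Gamma,i})\geq\sigma^2$ holds for all $\Gamma$, so your invocation of the inverse-Wishart tail for the lower bound is superfluous.) Once the sieve is established by the direct argument, the rest of your plan --- the two-sided eigenvalue control of $\bar\Delta_i$, the lower bound on the quadratic form via $\rho_{\max}(\bar\Delta_i)$, and the standard $\log\bigl(|(A+B)/2|/(|A|^{1/2}|B|^{1/2})\bigr)\gtrsim\|A-B\|_F^2$ bound --- goes through and yields the theorem.
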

The proof can be found in Section \ref{Proof_Th3}. By comparing our theorem to \citet{castillo_bayesian_2015}'s results in Bayesian, or \citet{buhlmann2011statistics}'s results in frequentist framework, in simple linear regression, it can be observed that the same rates are achieved for the prediction term. For the covariance term, the rate obtained in Theorem \ref{Th_recovery} coincides with that obtained for linear regression with nuisance parameters by \citet{jeong_unified_2021}. However, \cite{ning_bayesian_2020} achieve a sharper rate of the form $\sqrt{\dfrac{r^2\log(n)}{n}}$. In our case, the rate we establish reflects technical limitations of our proof approach.

The last theorem gives precise interpretations of the posterior contraction result for the parameter $\beta$. The posterior contraction rates with respect to more concrete metrics are derived based on an additional condition, summarized by Assumption \ref{hyp_identifiability} .

\begin{assumption}
    \label{hyp_identifiability}
    For some $\eta>0$, there exists a constant $L$ such that for all $\delta>0$ small enough, there exists $n_0\in\mathbb{N}$ such that for all  $n \geq n_0$:
    $$ \sup\Big\{ \frac{1}{n} \|X(\beta-\beta_0)\|_2^2 : \beta \in \mathcal{B}, \frac{1}{n} \sum_{i=1}^n \sum_{j=1}^{m_i} |f(X_i\beta,t_{ij})-f(X_i\beta_0,t_{ij})|^2 \leq \delta \Big\} \leq L \delta^{\eta}.$$ 
\end{assumption}

This assumption, as well as Assumption \ref{hyp_ni_sup_q}, is a sufficient condition for the identifiability of the model and allows to derive posterior contraction rate for $\beta$ from the third assertion of Theorem \ref{Th_recovery}. This condition is inspired by the so-called "stability estimate" condition used in the context of nonlinear inverse problems \cite{nickl2023bayesian}. To ensure the identifiability of the parameter $\beta$, a kind of assumption of "local invertibility" for the Gram matrix $X^\top X$ is also required. For this purpose, we define the following compatibility numbers drawing from the literature \citep{castillo_bayesian_2015}.

\begin{definition}
\label{def_phi2}
    For all $s>0$, the smallest scaled singular value of dimension $s$ is defined as: $$\phi_2(s)= \inf_{\beta : 1 \leq s_{\beta}\leq s }\dfrac{\|X\beta\|_2}{\|X\|_{*}\|\beta\|_2}.$$
\end{definition}

\begin{definition}
\label{def_phi1}
    For all $s>0$, the uniform compatibility number in dimension $s$ is defined as: $$\phi_1(s)= \inf_{\beta : 1 \leq s_{\beta}\leq s }\dfrac{\|X\beta\|_2 \sqrt{s_{\beta}}}{\|X\|_{*}\|\beta\|_1}.$$
\end{definition}

\begin{theorem}[Posterior contraction rate for $\beta$]
\label{Th_beta}
    In model \eqref{NLMM}, with prior specifications outlined in Section \ref{sec_prior}, and assuming Assumptions \ref{hyp_lip}-\ref{hyp_identifiability}, then there exist constants $C_6, C_7, C_8>0$ such that:
    $$\underset{\beta_0 \in \mathcal{B}_0, \Gamma_0 \in \mathcal{H}_0}{\sup} \mathbb{E}_{0}\left[\Pi\left(\beta : \|X(\beta-\beta_0)\|_2 > C_6  \dfrac{(s_0\log(p))^{\eta/2}}{n^{(\eta-1)/2}} \bigg| Y^{(n)}\right)\right] \underset{n \rightarrow \infty}{\longrightarrow} 0,$$
    $$\underset{\beta_0 \in \mathcal{B}_0, \Gamma_0 \in \mathcal{H}_0}{\sup} \mathbb{E}_{0}\left[\Pi\left(\beta : \|\beta-\beta_0\|_2 > C_7 \dfrac{n^{(1-\eta)/2}(s_0\log(p))^{\eta/2}}{\|X\|_{*} \phi_2((C_1+1)s_0)} \bigg| Y^{(n)}\right)\right] \underset{n \rightarrow \infty}{\longrightarrow} 0, $$
    $$\underset{\beta_0 \in \mathcal{B}_0, \Gamma_0 \in \mathcal{H}_0}{\sup} \mathbb{E}_{0}\left[\Pi\left(\beta : \|\beta-\beta_0\|_1 > C_8 \dfrac{n^{(1-\eta)/2} s_0^{(\eta+1)/2} \log(p)^{\eta/2}}{\|X\|_{*} \phi_1((C_1+1)s_0)} \bigg| Y^{(n)}\right)\right] \underset{n \rightarrow \infty}{\longrightarrow} 0. $$
\end{theorem}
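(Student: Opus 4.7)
The plan is to build on the third assertion of Theorem \ref{Th_recovery} and on Theorem \ref{Th_dim}, transferring the posterior concentration of the prediction term to a bound on $\|X(\beta-\beta_0)\|_2$ via Assumption \ref{hyp_identifiability}, and then inverting locally through the compatibility numbers $\phi_1, \phi_2$ of Definitions \ref{def_phi2}--\ref{def_phi1} to reach the $\ell_2$- and $\ell_1$-rates on $\beta-\beta_0$.

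First, I would work on the high-posterior-probability event produced by Theorem \ref{Th_recovery}, on which $\sum_{i=1}^n \sum_{j=1}^{n_i} |f(X_i\beta, t_{ij}) - f(X_i\beta_0, t_{ij})|^2 \lesssim s_0\log(p)$. Applying Assumption \ref{hyp_identifiability} pointwise at every $(i, t_{ij})$ yields $|f(X_i\beta, t_{ij}) - f(X_i\beta_0, t_{ij})| \gtrsim \|X_i(\beta-\beta_0)\|_2$, so summing over $j$ gives $\|f_i(X_i\beta) - f_i(X_i\beta_0)\|_2^2 \gtrsim n_i\|X_i(\beta-\beta_0)\|_2^2$. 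A further sum over $i$, using $n_i \geq 1$ and the block identity $\|X(\beta-\beta_0)\|_2^2 = \sum_i \|X_i(\beta-\beta_0)\|_2^2$, produces $\|X(\beta-\beta_0)\|_2^2 \lesssim s_0\log(p)$, which is the first assertion.

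Next, I would intersect with the event $\{|S_\beta| \leq C_1 s_0\}$ provided by Theorem \ref{Th_dim}; then $s_{\beta-\beta_0} \leq (C_1+1)s_0$, and the monotonicity of $\phi_1, \phi_2$ in the dimension combined with Definitions \ref{def_phi2}--\ref{def_phi1} directly give
$$\|\beta-\beta_0\|_2 \leq \frac{\|X(\beta-\beta_0)\|_2}{\|X\|_*\,\phi_2((C_1+1)s_0)}, \qquad \|\beta-\beta_0\|_1 \leq \frac{\sqrt{(C_1+1)s_0}\,\|X(\beta-\beta_0)\|_2}{\|X\|_*\,\phi_1((C_1+1)s_0)}.$$
Substituting the bound from the first step yields the two remaining assertions, and a union bound over the high-probability events from Theorems \ref{Th_dim} and \ref{Th_recovery} completes the argument.

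The main delicate point is the pointwise application of Assumption \ref{hyp_identifiability}: the multiplicative constant must be uniform in $i$ and $t$ so that summation does not degrade the bound. The "for all $\delta>0$" formulation of the assumption is tailored precisely to this: whatever size of $f$-neighborhood the Rényi contraction of Theorem \ref{Th_Renyi} attains, the assumption remains applicable. The second clause of Assumption \ref{hyp2_beta0}, by contrast, does not enter the derivation itself; it only ensures that $\tfrac{s_0^2\log(p)}{\|X\|_*^2 \phi_1^2((C_1+1)s_0)} = o(1)$, so that the announced rates truly vanish and the contraction statements are substantive.
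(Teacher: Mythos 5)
There is a genuine gap. Theorem~\ref{Th_recovery} gives a bound on the \emph{average} $\frac{1}{n}\sum_i \|f_i(X_i\beta)-f_i(X_i\beta_0)\|_2^2$, not a per-index bound, so for individual $i$ the quantity $\|f_i(X_i\beta)-f_i(X_i\beta_0)\|_2$ may be of order $\sqrt{s_0\log(p)}$, which is not small. You apply Assumption~\ref{hyp_identifiability} "pointwise at every $(i,t_{ij})$", implicitly reading the hidden constant in "$\gtrsim$" as uniform over all $\delta>0$. But if that constant were truly $\delta$-independent, every $\beta$ would lie in the left-hand set for $\delta$ large enough, turning \ref{hyp_identifiability} into a \emph{global} lower bound on $|f(\cdot,t)-f(X_i\beta_0,t)|$ — a far stronger condition than the local non-degeneracy the authors intend (see their remark that $X_i\beta_0$ "does not lie in a \emph{neighborhood} of critical points"). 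Under the intended local reading, you may only invoke \ref{hyp_identifiability} on indices $i$ where $\|f_i(X_i\beta)-f_i(X_i\beta_0)\|_2\leq\delta$ for some fixed small $\delta$, and you have no control over the rest.

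The paper handles exactly this by introducing $I_{n,\delta}=\{i:\|f_i(X_i\beta)-f_i(X_i\beta_0)\|_2^2>\delta\}$, noting $|I_{n,\delta}|\lesssim n\epsilon_n^2$ via a Markov argument, and bounding the contribution of the bad indices by $\frac{|I_{n,\delta}|}{n}\max_i\|X_i(\beta-\beta_0)\|_2^2$. Controlling that maximum uses the first clause of Assumption~\ref{hyp2_beta0} (to pass to $\|\beta-\beta_0\|_1^2$) together with Theorem~\ref{Th_dim} and the definition of $\phi_1$, yielding the inequality
\[
\epsilon_n^2 \gtrsim \Bigl(1-\frac{s_0\, n\epsilon_n^2}{\|X\|_*^2\phi_1^2((C_1+1)s_0)}\Bigr)\frac{1}{n}\|X(\beta-\beta_0)\|_2^2,
\]
and it is precisely the \emph{second} clause of Assumption~\ref{hyp2_beta0}, $\frac{s_0^2\log(p)}{\|X\|_*^2\phi_1^2((C_1+1)s_0)}=o(1)$, that keeps the prefactor bounded away from $0$. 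So your statement that this clause "does not enter the derivation itself" is incorrect: it is structurally indispensable to closing the self-improving inequality, not merely a cosmetic condition ensuring the stated rates tend to zero. The rest of your argument (intersecting with the dimension event from Theorem~\ref{Th_dim} and inverting via $\phi_1,\phi_2$) matches the paper and is fine.
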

The proof can be found in Section \ref{Proof_Th4}. Since the compatibility numbers can be bounded away from zero under some specific conditions on the design matrix (see Example 7 of \citet{castillo_bayesian_2015} for further discussion), they can be removed from the rates. Moreover, if we reasonably assume that for each $1 \leq i \leq n$, the maximum of $\| X_i\|_{*}$ is bounded, \textit{i.e.}  $\max _i \| X_i\| _{*} \asymp 1$ (commonly satisfied in practical scenarios), we have that $\|X\|_{*} \asymp \sqrt{n}$. Also, these rates coincide with those obtained by \citet{jeong_posterior_2021} or \citet{jeong_unified_2021}, respectively, for generalized linear models and linear regression with nuisance parameters, if Assumption \ref{hyp_identifiability} is satisfied for $\eta=1$. 

To the best of our knowledge, the only established theoretical results on frequentist variable selection in high-dimensional settings are primarily due to \citet{schelldorfer2011}, who uses an $\ell_1$ penalty and focuses on linear mixed-effects models. Our results are particularly noteworthy, as they extend beyond the linear case to include nonlinear models, subject to certain regularity and stability conditions. There are clear similarities between the assumptions in \cite{schelldorfer2011} and ours: parameters evolving in a bounded space, a well-posed random effects model with an invertible variance-covariance matrix, a sparse fixed effects vector, and eigenvalue conditions on $Z_i^T Z_i$. In terms of oracle optimality, Schelldorfer and coauthors \cite{schelldorfer2011} establish a bound that shows that the sparser the model (\textit{ie} for small $s_0$), the better the performance, provided that the number of individuals is sufficiently large relative to $\log p$ to approach the oracle. Our posterior contraction rate exhibits a similar behavior, reinforcing the analogy between the two results. Moreover, if Assumption \ref{hyp_identifiability} holds for $\eta=1$, our rate is better than theirs. Note that \cite{ghosh2018non} propose an extension of these results to more general non-concave penalties. However, while our Bayesian approach enables us to derive insights into the estimation of the covariance matrix of the random effects, Schelldorfer’s work does not provide results on this matrix. Recently, \cite{zhang2023selection} studied the asymptotic oracle properties of non-concave penalized quasi-likelihood estimator under some conditions with a proxy covariance matrix in a high-dimensional generalized linear mixed models.

\subsection{Example of non-linear marginal mixed-effects model}

Let us revisit the example of the logistic growth function (see section \ref{ss_sec_model}) with a single individual-specific effect ($q=1$), as described by the following equation: $$f(\varphi,t)=\dfrac{A}{1+e^{(t-t_0)\varphi}},$$
with $t\in [0,T], A>0, t_0>0, T>0$. We can check that model defined by~\eqref{NLMM} with this non-linear function $f$ satisfies the assumptions \ref{hyp_lip} and \ref{hyp_identifiability} set out above for obtaining posterior contraction rates. First, note that here $\beta$ is a vector of dimension $p$ and $X_i=V_i^\top$ is a row vector of dimension $p$.

By studying the derivative of $f$ with respect to $\varphi$, it is straightforward to observe that $f$ is Lipschitz with respect to its first argument, with a Lipschitz constant of the form $C(A,T,t_0)/4$, and therefore satisfies Assumption \ref{hyp_lip}.

Moreover, if $\varphi$ belongs to a compact set, we have that $\bigg| \dfrac{\partial f(\varphi,t)}{\partial \varphi} \bigg| \geq \alpha >0$ for all $t \in [0,T]$. Thus, let us further suppose that the covariates are well controlled, and more specifically that $\max_i \|V_i\|_1 \lesssim 1$, and that $\beta$ lies in a compact set such that $\|\beta\|_{\infty} \lesssim 1$. Then, by the mean value theorem, we have:
$$ \frac{1}{n} \sum_{i=1}^n \sum_{j=1}^{m_i} |f(X_i\beta,t_{ij})-f(X_i\beta_0,t_{ij})|^2 \geq \dfrac{\alpha}{n} \sum_{i=1}^n |X_i(\beta-\beta_0)|^2 = \dfrac{\alpha}{n} \| X(\beta-\beta_0)\|_2^2,$$ 
which proves Assumption \ref{hyp_identifiability} with $\eta=1$.

\section{Conclusion}
\label{sec_conclusion}

In this work, we have established a posterior contraction result for a non-linear mixed-effects model within the high-dimensional setting. The strength of our approach lies in the fact that it applies to a broader class of mixed-effects models beyond just linear ones, thus making it relevant to a wide range of practical applications. The proof of this result, however, involves overcoming several significant technical hurdles.

Furthermore, the computational challenges associated with variable selection in high-dimensional non-linear mixed-effects models have been addressed by \citet{naveau_bayesian_2023} for a Gaussian spike-and-slab prior. Their work offers valuable insights into the algorithmic strategies necessary for efficient inference in such complex models. In the context of a discrete spike-and-slab prior, or more generally priors of the form \eqref{prior_S_beta}, \cite{geweke1996variable,george_approaches_1997} employed Gibbs sampling within the framework of linear regression models. However, for more complex models like the one considered in this article, and particularly in high-dimensional environments, efficient sampling from complex posterior distributions remains a major challenge.

In addition, we focus here on the inverse-Wishart prior for the covariance matrix, as this is a common and practical choice in applied contexts, but it seems to give sub-optimal rates in this proof case. It would therefore be interesting to study in detail the precise asymptotic rate for $\Gamma$. Moreover, variable selection properties are of primary importance in practice and rely on posterior contraction results obtained in this paper \citep{castillo_bayesian_2015,jeong_unified_2021}. The study of variable selection is beyond the scope of this article and is naturally left for a future research work.

\section{Proofs of main theorems}
\label{sec_proofs}

In this section, proofs of the main theorems are provided following the general structure of those presented in \cite{jeong_unified_2021}, which is itself based on the general theory of \cite{ghosal_convergence_2000,ghosal_convergence_2007,ghosal2017fundamentals}. However, significant adaptations were required due to the fundamental differences in our model. Indeed, the non-linear nature of our framework, as opposed to the linear setting in \cite{jeong_unified_2021}, necessitated new assumptions and a careful reworking of several crucial arguments. In particular, the originality of the proof of Theorem \ref{Th_beta}, which consists of deriving a posterior contraction rate on $\beta$ vector from the contraction rate on the prediction term, comes from non-linear statistical inverse problem tools \cite{nickl2023bayesian}.

First, additional notation used for the proofs is introduced. Let $\Lambda_n(\beta,\Gamma)=\prod_{i=1}^n p_{\beta,\Gamma,i}/p_{0,i}$ be the likelihood ratio of $p_{\beta,\Gamma}=\prod_{i=1}^n p_{\beta,\Gamma,i}$, where $p_{\beta,\Gamma,i}$ is the density of the $i$-th observation vector $y_i$, and $p_{0}=\prod_{i=1}^n p_{0,i}= \prod_{i=1}^n p_{\beta_0,\Gamma_0,i}$ the density with the true parameters $\beta_0$ and $\Gamma_0$.
For two densities $g$ and $h$, let $K(g,h)= \int g(x) \log \left(g(x)/h(x) \right) dx$ the Kullback-Leibler divergence, and $V(g,h)~=~\int g(x) \left| \log\left(g(x)/h(x)\right) - K(g,h) \right|^2 dx$ the Kullback-Leibler variation. 

\subsection{Proof of Theorem~\ref{Th_dim}}
\label{proof_Th1}

The proof of Theorem~\ref{Th_dim} is based on the following technical lemma which provides a lower bound for the denominator of the posterior distribution, with the probability tending to $1$.

\begin{lemma}
    \label{Lem_denom}
    Suppose that Assumptions~\ref{hyp_lip}-\ref{hyp_Zi_sp} are satisfied. Then, there exists a positive constant $M$ such that \begin{equation}
    \label{eq_Lem1}
        \mathbb{P}_0 \left(\int \Lambda_n(\beta,\Gamma) d\Pi(\beta,\Gamma) \geq \pi_p(s_0) e^{-M(s_0 \log(p) + \log(n))} \right) \longrightarrow 1,
    \end{equation}
    when $n$ tends to infinity.
\end{lemma}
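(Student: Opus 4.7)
The plan is to apply the standard evidence lower bound of Ghosal, Ghosh and van der Vaart (Lemma 8.1 in \cite{ghosal_convergence_2000}): for any $C>0$ and any measurable $B_n$ contained in the Kullback--Leibler neighbourhood
$$B(\epsilon_n) = \left\{(\beta,\Gamma):\ \sum_{i=1}^n K(p_{0,i},p_{\beta,\Gamma,i}) \leq n\epsilon_n^2,\ \sum_{i=1}^n V(p_{0,i},p_{\beta,\Gamma,i}) \leq n\epsilon_n^2\right\},$$
one has $\int \Lambda_n(\beta,\Gamma)\, d\Pi(\beta,\Gamma) \geq \Pi(B_n)\,e^{-(1+C)n\epsilon_n^2}$ with probability at least $1 - 1/(C^2 n\epsilon_n^2)$ under $\mathbb{P}_0$. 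I take $n\epsilon_n^2 \asymp s_0\log p + \log n$, which by Assumption \ref{hyp_p} tends to infinity, so the failure probability vanishes. It then suffices to exhibit $B_n \subset B(\epsilon_n)$ with $\log \Pi(B_n) \geq \log \pi_p(s_0) - M'(s_0\log p + \log n)$ for some $M'>0$.

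A natural candidate is the product neighbourhood
$$B_n = \bigl\{\beta:\ S_\beta = S_0,\ \|\beta_{S_0}-\beta_{0,S_0}\|_\infty \leq \rho \bigr\} \times \bigl\{\Gamma:\ \|\Gamma-\Gamma_0\|_F \leq \eta \bigr\},$$
with $\rho \asymp \sqrt{\log p/s_0}\,/(K_{n}\|X\|_*)$ and $\eta \asymp \sqrt{s_0\log p/n}$. Expanding the Gaussian Kullback--Leibler divergence (and analogously $V$) produces a mean contribution and a covariance contribution. Assumptions \ref{hyp_gamma0} and \ref{hyp_Zi_sp} guarantee that the eigenvalues of $\Delta_{\Gamma,i}$ remain uniformly bounded away from $0$ and $\infty$ on $B_n$, so both parts are quadratic in the respective perturbations. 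For the mean, Assumption \ref{hyp_lip} combined with $\|X_i(\beta-\beta_0)\|_2 \leq \|X\|_* \|\beta-\beta_0\|_1 \leq \|X\|_* s_0 \rho$ yields $\sum_i \|f_i(X_i\beta)-f_i(X_i\beta_0)\|_2^2 \leq K^2 J_n \|X\|_*^2 s_0^2 \rho^2 \lesssim s_0\log p$. For the covariance, the identity $\Delta_{\Gamma,i}-\Delta_{\Gamma_0,i} = Z_i(\Gamma-\Gamma_0)Z_i^\top$ together with Assumption \ref{hyp_Zi_sp} gives $\|\Delta_{\Gamma,i}-\Delta_{\Gamma_0,i}\|_F \leq \overline{\rho_Z}^{\,2}\, \eta$, so the total contribution is $\lesssim n\eta^2 \lesssim s_0\log p$.

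The prior mass of $B_n$ factorises into three pieces. The dimension prior contributes $\pi_p(s_0)/\binom{p}{s_0}$, and $\log\binom{p}{s_0}\leq s_0\log p$. The Laplace density on the $\ell_\infty$-box of radius $\rho$ around $\beta_{0,S_0}$ is at least $(\lambda/2)^{s_0}\exp(-\lambda s_0(\|\beta_0\|_\infty+\rho))$, which by Assumption \ref{hyp_beta0} is bounded below by $(\lambda/2)^{s_0}e^{-c s_0\log p}$; multiplying by the box volume $(2\rho)^{s_0}$ gives a contribution of order $(\lambda\rho)^{s_0} e^{-cs_0\log p}$, and the lower bound in Assumption \ref{hyp_lambda} forces $s_0\log(\lambda\rho) \geq -c'(s_0\log p + \log n)$. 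Finally, by Assumption \ref{hyp_gamma0} the inverse-Wishart density is continuous and strictly positive at $\Gamma_0$, so the Frobenius $\eta$-ball has mass at least of polynomial order $\eta^{q(q+1)/2}$, contributing $O(\log n)$ in the exponent since $q$ is fixed and $1/\eta$ is polynomial in $(n,p)$. Summing the three contributions gives $\log \Pi(B_n) \geq \log \pi_p(s_0) - M'(s_0\log p + \log n)$, and \eqref{eq_Lem1} follows with $M = M'+1+C$.

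The main technical balance lies in the choice of $\rho$: it must be small enough to offset the potentially large factor $K_{n}\|X\|_*$ appearing in the bound for the mean contribution to $K$, yet $\lambda\rho$ must remain at least a fixed negative power of $p$ so that the Laplace factor $(\lambda\rho)^{s_0}$ does not overwhelm the dimension prior $\pi_p(s_0)$. Assumption \ref{hyp_lambda} is tailored precisely to this compatibility. The Kullback--Leibler variation $V$ is handled in parallel with $K$ because, for multivariate Gaussians with the controlled eigenvalues above, it is again a quadratic functional of the same perturbations and obeys the same type of bound.
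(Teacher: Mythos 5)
Your proposal is correct and takes essentially the same route as the paper: invoke the Ghosal--Ghosh--van~der~Vaart evidence lower bound over a Kullback--Leibler neighbourhood, bound $K$ and $V$ on a product neighbourhood of $(\beta_0,\Gamma_0)$ by exploiting Lipschitz continuity of $f$ and control of the eigenvalues of $\Delta_{\Gamma,i}$, and then bound the prior mass using the dimension prior, the Laplace slab, and the inverse-Wishart density. The only cosmetic departures from the paper are that you take the KL radius to be $\asymp s_0\log p + \log n$ rather than the paper's tighter choice $\asymp \log n$, you parameterize the $\beta$-neighbourhood as an $\ell_\infty$-box instead of an $\ell_1$-ball, and you bound the inverse-Wishart mass by a generic local-positivity argument rather than the paper's appeal to the Wishart small-ball lemma of Ning and Ghosal---all of which work and do not alter the structure of the argument.
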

This lemma is demonstrated in Appendix \ref{appA}.

\begin{proof}[Proof of Theorem~\ref{Th_dim}]
    Let $B=\{(\beta,\Gamma) \in \mathcal{B} \times \mathcal{H} : |S_{\beta}| > \Tilde{s}\}$, with an integer $\Tilde{s} \geq s_0$. First, by the Bayes formula: \begin{equation}
        \label{formule_Bayes}
        \Pi(B| Y^{(n)})=\dfrac{\int_B \Lambda_n(\beta,\Gamma) d\Pi(\beta,\Gamma)}{\int \Lambda_n(\beta,\Gamma) d\Pi(\beta,\Gamma)}.
    \end{equation}
    Let us prove that $\mathbb{E}_{0}\left[ \Pi(B| Y^{(n)})  \right]$ tends to $0$ as $n$ tends to infinity uniformly for $\beta_0 \in \mathcal{B}_0$ and $\Gamma_0 \in \mathcal{H}_0$, and choose a suitable $\Tilde{s}$. Let $\mathcal{A}_n$ be the event that appears in Equation~\eqref{eq_Lem1}. We can write
    \begin{equation}
    \label{decomp_E0}
    \mathbb{E}_{0}\left[\Pi\left(B | Y^{(n)}\right)\right]=\mathbb{E}_{0}\left[\Pi\left(B | Y^{(n)}\right)\mathds{1}_{\mathcal{A}_n}\right] + \mathbb{E}_{0}\left[\Pi\left(B | Y^{(n)}\right)\mathds{1}_{\mathcal{A}_n^c}\right].
    \end{equation}
    where the second term tends to $0$ by using Lemma \ref{Lem_denom}.
    
    Concerning the first term, by definition of $\mathcal{A}_n$, we have that
        \begin{align*}
        \mathbb{E}_{0} \left[\Pi\left(B | Y^{(n)}\right)\mathds{1}_{\mathcal{A}_n}\right] &= \mathbb{E}_{0}\left[\dfrac{\int_B \Lambda_n(\beta,\Gamma) d\Pi(\beta,\Gamma)}{\int \Lambda_n(\beta,\Gamma) d\Pi(\beta,\Gamma)}\mathds{1}_{\mathcal{A}_n}\right] \\
        & \leq \mathbb{E}_{0}\left[\int_B \Lambda_n(\beta,\Gamma) d\Pi(\beta,\Gamma) \pi_p(s_0)^{-1} e^{M(s_0 \log(p) + \log(n))}  \mathds{1}_{\mathcal{A}_n}\right] \\
        & \leq \pi_p(s_0)^{-1} e^{M(s_0 \log(p) + \log(n))} \mathbb{E}_{0}\left[\int_B \Lambda_n(\beta,\Gamma) d\Pi(\beta,\Gamma)  \mathds{1}_{\mathcal{A}_n} \right] \\
    \end{align*}
        Now, we get that
    \begin{align*}
    \mathbb{E}_{0}\left[\int_B \Lambda_n(\beta,\Gamma) d\Pi(\beta,\Gamma)  \mathds{1}_{\mathcal{A}_n} \right] &\leq \mathbb{E}_{0}\left[\int_B \dfrac{p_{\beta,\Gamma}(y)}{p_0(y)} d\Pi(\beta,\Gamma) \right] \\
    &= \int \int_B p_{\beta,\Gamma}(y) d\Pi(\beta,\Gamma) dy \\
    &= \Pi(B)
    \end{align*}
    using Fubini-Tonelli theorem and since $p_{\beta,\Gamma}$ is a density.
        Thus,
    \[\mathbb{E}_{0}\left[\Pi\left(B | Y^{(n)}\right)\mathds{1}_{\mathcal{A}_n}\right] \leq \pi_p(s_0)^{-1} \exp{\left\{M(s_0 \log(p) + \log(n))\right\}}\Pi(B), \]
     and by Assumption \ref{hyp_pi_p},
    \begin{align*}
    \Pi(B) &= \Pi( |S_{\beta}| > \Tilde{s}) = \sum_{s=\Tilde{s}+1}^{\db} \pi_p(s) \\
    & \leq \pi_p(s_0) \sum_{s=\Tilde{s}+1}^{\db} \left( A_2 p^{-A_4}\right)^{s-s_0} \\
    &= \pi_p(s_0)  \left( A_2 p^{-A_4}\right)^{\Tilde{s}+1-s_0} \sum_{k=0}^{\db-\Tilde{s}-1} \left( A_2 p^{-A_4}\right)^{k} \\
    &\leq \pi_p(s_0)  \left( A_2 p^{-A_4}\right)^{\Tilde{s}+1-s_0} \dfrac{1}{1-A_2 p^{-A_4}},
    \end{align*}
    for $p$ large enough to ensure that $A_2 p^{-A_4}<1$. Thus finally we have
        \begin{align*}
    \mathbb{E}_{0}\left[\Pi\left(B | Y^{(n)}\right)\mathds{1}_{\mathcal{A}_n}\right]& \leq \pi_p(s_0)^{-1} \exp{\left\{M(s_0 \log(p) + \log(n))\right\}}\Pi(B) \\
    &\leq  \exp{\left\{M(s_0 \log(p) + \log(n)) + (\Tilde{s}+1-s_0)\log(A_2 p^{-A_4}) \right\}} \dfrac{1}{1-A_2 p^{-A_4}} \\
    &= \exp{\left\{ \log(p) \left(M s_0+ M \dfrac{\log(n)}{\log(p)} -A_4(\Tilde{s}+1-s_0) \right)  + (\Tilde{s}+1-s_0)\log(A_2) \right\}} \times \\
    & \hspace{1cm}\dfrac{1}{1-A_2 p^{-A_4}} \\
    \end{align*}
    where $\log(n)/\log(p)\leq 1$ as $p>n$. Thus,  as $ (1-A_2 p^{-A_4})^{-1} $ tends to 1 when $n \rightarrow \infty$, we choose $\Tilde{s}$ as the largest integer that is smaller than $C_1 s_0$ (such as $\Tilde{s}+1> C_1 s_0$), for some constant $C_1$ large enough to have $Ms_0+M-A_4(C_1 s_0-s_0)<0$, and then we have that
    \begin{align*}
    \mathbb{E}_{0}\left[\Pi\left(B | Y^{(n)}\right)\mathds{1}_{\mathcal{A}_n}\right]
    & \leq  \exp{\big\{ \log(p) \left(M s_0+ M -A_4(C_1 s_0-s_0) \right)  + (\Tilde{s}+1-s_0)\log(A_2) \big\}} \dfrac{1}{1-A_2 p^{-A_4}}
    \end{align*}
where the term on the right tends to zero when $n$ goes to infinity.
Finally,  by Equation~\eqref{decomp_E0}, we conclude that $\mathbb{E}_{0}\left[ \Pi(B| Y^{(n)})  \right] \longrightarrow 0$, for this well-chosen $\Tilde{s}$. Thus, we have also that $\mathbb{E}_{0}\left[\Pi\left(\beta : |S_{\beta}|> C_1 s_0 \bigg| Y^{(n)}\right)\right] \longrightarrow 0$, which concludes the proof of the theorem. 
\end{proof}

\subsection{Proof of Theorem \ref{Th_Renyi}}
\label{Proof_Th2}

\begin{proof}[Proof of Theorem \ref{Th_Renyi}]

Let $\mathcal{B}_n = \{ \beta \in \mathcal{B} | s_{\beta} \leq C_1 s_0 \}$, $R_n^{*}(\beta, \Gamma)=R_n(p_{\beta, \Gamma},p_0)$ and $\epsilon_n = \sqrt{s_0 \log(p) / n}$.

\begin{align*}
    \mathbb{E}_{0}\Bigg[\Pi\Bigg((\beta,\Gamma) \in \mathcal{B} \times \mathcal{H} &: R_n^{*}(\beta, \Gamma) > C_2 \epsilon_n^2 \bigg|Y^{(n)}\Bigg)\Bigg] \\
    & \leq \mathbb{E}_{0}\left[\Pi\left((\beta,\Gamma) \in \mathcal{B}_n \times \mathcal{H} : R_n^{*}(\beta, \Gamma) > C_2 \epsilon_n^2 \bigg| Y^{(n)}\right)\right] + \mathbb{E}_{0}\left[\Pi\left(\mathcal{B}_n^{c} | Y^{(n)}\right)\right]
\end{align*}
where the second term tends to $0$ when $n$ goes to infinity by Theorem~\ref{Th_dim}.

Therefore, given $D~= ~\left \{ (\beta,\Gamma) \in \mathcal{B}_n \times \mathcal{H} : R_n^{*}(\beta, \Gamma) > C_2 \epsilon_n^2 \right \}$, proving Theorem \ref{Th_Renyi} consists in showing that $\mathbb{E}_{0}\left[\Pi\left(D | Y^{(n)}\right)\right]$ goes to $0$ as $n$ tends to infinity uniformly for $\beta_0 \in \mathcal{B}_0$ and $\Gamma_0 \in \mathcal{H}_0$.

This proof is based on the construction and existence of exponentially powerful tests to show contraction rates of posterior distributions (see \citet{ghosal_convergence_2000,ghosal2017fundamentals} for more details). More precisely, we want to construct a test $\varphi_n$ such that on an appropriate sieve $\mathcal{B}_n^{*} \times \mathcal{H}_n \subset \mathcal{B}_n \times \mathcal{H}$ we have, for some constants $M_1$, $M_2>0$:
\begin{equation}
    \label{test_phi}
    \mathbb{E}_0[\varphi_n] \lesssim e^{-M_1 n \epsilon_n^2} \quad \text{ , } \quad \sup_{(\beta, \Gamma) \in \mathcal{B}_n^{*} \times \mathcal{H}_n : R_n^{*}(\beta, \Gamma) > C_2 \epsilon_n^2 } \mathbb{E}_{(\beta, \Gamma)}[1-\varphi_n] \leq e^{-M_2 n \epsilon_n^2}
\end{equation}
where the sieve $\mathcal{B}_n^{*} \times \mathcal{H}_n$ shall satisfy that the prior mass of $\mathcal{B}_n \backslash \mathcal{B}_n^{*}$ and $\mathcal{H} \backslash \mathcal{H}_n$ decreases rapidly enough to balance the denominator of the posterior. Indeed, assuming that we have constructed such a test, then, for $\mathcal{A}_n$ the event that appears in Equation~\eqref{eq_Lem1}:

\begin{align*}
    \mathbb{E}_{0}\left[\Pi\left(D | Y^{(n)}\right)\right] &= \mathbb{E}_{0}\left[\Pi\left(D | Y^{(n)}\right) \mathds{1}_{\mathcal{A}_n}\right] + \mathbb{E}_{0}\left[\Pi\left(D | Y^{(n)}\right)\mathds{1}_{\mathcal{A}_n^c}\right] \\
    &= \mathbb{E}_{0}\left[\Pi\left(D | Y^{(n)}\right) \mathds{1}_{\mathcal{A}_n} (1 - \varphi_n) + \Pi\left(D | Y^{(n)}\right) \mathds{1}_{\mathcal{A}_n} \varphi_n\right] + \mathbb{E}_{0}\left[\Pi\left(D | Y^{(n)}\right)\mathds{1}_{\mathcal{A}_n^c}\right] \\
    &\leq \mathbb{E}_{0}\left[\Pi\left(D | Y^{(n)}\right) \mathds{1}_{\mathcal{A}_n} (1 - \varphi_n)\right]  + \mathbb{E}_{0}\left[\varphi_n\right] + \mathbb{P}_{0}\left(\mathcal{A}_n^c\right)  
\end{align*}
where by construction of $\varphi_n$, $\mathbb{E}_{0}\left[\varphi_n\right] \underset{n\rightarrow \infty}{\longrightarrow 0}$, and $\mathbb{P}_{0}\left(\mathcal{A}_n^c\right) \underset{n\rightarrow \infty}{\longrightarrow 0}$ by Lemma \ref{Lem_denom}.

Now for the first term, by the Bayes formula \eqref{formule_Bayes}, we have that

\begin{align*}
    \mathbb{E}_{0}\left[\Pi\left(D | Y^{(n)}\right) \mathds{1}_{\mathcal{A}_n} (1 - \varphi_n)\right] &= \mathbb{E}_{0}\left[\dfrac{\int_D \Lambda_n(\beta,\Gamma) d\Pi(\beta,\Gamma)}{\int \Lambda_n(\beta,\Gamma) d\Pi(\beta,\Gamma)} \mathds{1}_{\mathcal{A}_n} (1 - \varphi_n)\right] \\
    &\leq  \mathbb{E}_{0}\left[\int_D \Lambda_n(\beta,\Gamma) d\Pi(\beta,\Gamma) \pi_p(s_0)^{-1} e^{M(s_0 \log(p) + \log(n))} (1 - \varphi_n)\right]
\end{align*}

But, grant Assumption~\ref{hyp_pi_p}, we have that: $\pi_p(s_0)^{-1} \leq A_1^{-1} p^{A_3} \pi_p(s_0-1)^{-1}$ and by iteration $$-\log(\pi_p(s_0)) \lesssim s_0 \log(p) - \log(\pi_p(0)) \lesssim s_0 \log(p)$$ 
since $1 = \sum_{s=1}^p \pi_p(s) \leq \sum_{s=1}^p (A_2p^{-A_4})^s \pi_p(0) \lesssim \pi_p(0)$ by assumption \ref{hyp_pi_p}. Thus, for a constant $C$ large enough, $\pi_p(s_0)^{-1} e^{M(s_0 \log(p) + \log(n))} \leq e^{C s_0 \log(p)}=e^{C n \epsilon_n^2}$, since $\log(n) \lesssim s_0 \log(p)$. So, by using the Fubini-Tonelli theorem,

\begin{align*}
    \mathbb{E}_{0}\left[\Pi\left(D | Y^{(n)}\right) \mathds{1}_{\mathcal{A}_n} (1 - \varphi_n)\right] &\leq \int_D \mathbb{E}_{(\beta, \Gamma)}\left[1 - \varphi_n \right] d\Pi(\beta,\Gamma) \times  e^{C n \epsilon_n^2} \\
    &\leq \left(\int_{D \cap (\mathcal{B}_n^{*} \times \mathcal{H}_n)} \mathbb{E}_{(\beta, \Gamma)}\left[1 - \varphi_n \right] d\Pi(\beta,\Gamma) + \Pi(\mathcal{B}_n \backslash\mathcal{B}_n^{*}) + \Pi(\mathcal{H} \backslash \mathcal{H}_n) \right) \times  e^{C n \epsilon_n^2} \\
    &\leq \left(\sup_{(\beta,\Gamma) \in D \cap (\mathcal{B}_n^{*} \times \mathcal{H}_n)} \left\{\mathbb{E}_{(\beta, \Gamma)}\left[1 - \varphi_n \right] \right\}+ \Pi(\mathcal{B}_n \backslash\mathcal{B}_n^{*}) + \Pi(\mathcal{H} \backslash \mathcal{H}_n) \right) \times  e^{C n \epsilon_n^2} \\
    & \leq \left(e^{-M_2 n \epsilon_n^2} + \Pi(\mathcal{B}_n \backslash\mathcal{B}_n^{*}) + \Pi(\mathcal{H} \backslash \mathcal{H}_n) \right) \times  e^{C n \epsilon_n^2}
\end{align*}
by construction of $\varphi_n$, equation \eqref{test_phi}. Then for $M_2$ large enough and by the condition on the prior mass of $\mathcal{B}_n \backslash \mathcal{B}_n^{*}$ and $\mathcal{H} \backslash \mathcal{H}_n$, we have that $$\mathbb{E}_{0}\left[\Pi\left(D | Y^{(n)}\right) \mathds{1}_{\mathcal{A}_n} (1 - \varphi_n)\right] \underset{n \rightarrow \infty}{\longrightarrow} 0,$$ and finally $\mathbb{E}_{0}\left[\Pi\left(D | Y^{(n)}\right)\right]\underset{n \rightarrow \infty}{\longrightarrow} 0$, what was wanted to be demonstrated.

Thus, to complete the proof, we need to demonstrate the existence of such a test $\varphi_n$ satisfying \eqref{test_phi} on an appropriate sieve $\mathcal{B}_n^{*} \times \mathcal{H}_n$ such that the prior mass of $\mathcal{B}_n \backslash \mathcal{B}_n^{*}$ and $\mathcal{H} \backslash \mathcal{H}_n$ have an exponential decrease.

\paragraph{Construction of the test $\varphi_n$:} To this end, we want to apply Lemma D.3 of \citet{ghosal2017fundamentals}, which directly allows to construct the test $\varphi_n$ with appropriate control of error probabilities as described in \eqref{test_phi} to test the true value against the whole of the alternative intersected with the sieve. To apply this lemma, we need to construct local tests with exponentially small errors to compare the true value with a subset of the alternative, centered at any $(\beta_1, \Gamma_1) \in \mathcal{B} \times \mathcal{H}$ which is adequately distant from the true value with respect to the average Rényi divergence. The other condition to apply this Lemma is that the minimum number $N_n^{*}$ of these small subsets of the alternative needed to cover a sieve $\mathcal{B}_n^{*} \times \mathcal{H}_n$ is appropriately controlled in terms of $\epsilon_n$.

First, the following lemma constructs an appropriate local test by employing the likelihood ratio to compare the true value with a subset of the alternative and by controlling the second order moment of the likelihood ratios in these small pieces of the alternative. For $(\beta_1, \Gamma_1) \in \mathcal{B} \times \mathcal{H}$, we denote by $p_1$ the associated density, and $\mathbb{E}_{1}$ and $\mathbb{P}_{1}$ the expectation and probability under $p_1$.

\begin{lemma}
\label{Lem_test}
    For a given positive sequence $(\gamma_n)$, $(\beta_1, \Gamma_1) \in \mathcal{B} \times \mathcal{H}$ such that $R_n(p_0,p_1) \geq \epsilon_n^2$, where $\epsilon_n~=~\sqrt{s_0 \log(p)/n}$, define
    \begin{align*}
        \mathcal{F}_{1,n} = \bigg\{ (\beta, \Gamma) \in \mathcal{B} \times \mathcal{H} : \: \frac{1}{n}\sum_{i=1}^n \|f_i(X_i\beta)-f_i(X_i\beta_1)\|_2^2 \leq \dfrac{\epsilon_n^2}{16 \gamma_n},d_n(\Gamma, \Gamma_1) \leq \dfrac{\epsilon_n^2}{2 M_{\text{obs}} \gamma_n}, \: \max_{1 \leq i \leq n} \| \Delta_{\Gamma,i}^{-1} \|_{sp} \leq \gamma_n \bigg\}.
    \end{align*}
    Grant Assumptions~\ref{hyp_ni_sup_q}-\ref{hyp_Zi_sp} and \ref{hyp_beta0}, then there exists a test $\overline{\varphi}_n$ such that $$\mathbb{E}_0[\overline{\varphi}_n] \leq e^{-n \epsilon_n^2} \text{, \: and } \: \sup_{(\beta, \Gamma) \in \mathcal{F}_{1,n}} \mathbb{E}_{\beta, \Gamma}[1-\overline{\varphi}_n] \leq e^{-n \epsilon_n^2/16}.$$
\end{lemma}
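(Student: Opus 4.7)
The plan is to use the likelihood-ratio test against the single point $(\beta_1, \Gamma_1)$, namely $\overline{\varphi}_n = \mathds{1}\{\Lambda_n(\beta_1, \Gamma_1) \geq e^{-nR_n(p_0,p_1)/2}\}$, whose threshold sits halfway (in log-scale) between the $p_0$- and $p_1$-regimes. The two error probabilities can then be bounded by Markov's inequality applied to $\sqrt{\Lambda_n(\beta_1,\Gamma_1)}$ and to $\sqrt{p_0/p_1}$, exploiting the fact that every density is Gaussian, hence all the required affinities admit closed forms.

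The type I bound is immediate: since
$$\mathbb{E}_0\bigl[\sqrt{\Lambda_n(\beta_1,\Gamma_1)}\bigr] = \prod_{i=1}^n \int \sqrt{p_{0,i}\,p_{1,i}} = e^{-nR_n(p_0,p_1)}$$
by the definition of the average Rényi divergence, Markov's inequality yields $\mathbb{E}_0[\overline{\varphi}_n] \leq e^{nR_n(p_0,p_1)/4}\cdot e^{-nR_n(p_0,p_1)} = e^{-3nR_n(p_0,p_1)/4} \leq e^{-3n\epsilon_n^2/4}$, which is even stronger than the announced $e^{-n\epsilon_n^2}$.

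For the type II error, the key preliminary step is the Rényi-proximity estimate $R_n(p_{\beta,\Gamma}, p_1) \leq \epsilon_n^2/c$ for a suitable constant $c$, uniformly for $(\beta,\Gamma) \in \mathcal{F}_{1,n}$. Using the explicit Gaussian identity
$$-\log \int \sqrt{\phi(\cdot;\mu,\Sigma)\,\phi(\cdot;\mu_1,\Sigma_1)} = \tfrac{1}{8}(\mu - \mu_1)^\top \overline{\Sigma}^{-1}(\mu - \mu_1) + \tfrac{1}{2}\log \frac{\det \overline{\Sigma}}{\sqrt{\det \Sigma \, \det \Sigma_1}}, \quad \overline{\Sigma}=\tfrac{\Sigma+\Sigma_1}{2},$$
the mean term in each summand is controlled, via $\|\overline{\Sigma}^{-1}\|_{sp} \leq 2\gamma_n$, by $\tfrac{\gamma_n}{4}\|f_i(X_i\beta)-f_i(X_i\beta_1)\|_2^2$, while the log-determinant term is bounded by $\gamma_n^2 \|\Delta_{\Gamma,i}-\Delta_{\Gamma_1,i}\|_F^2$ through the Taylor expansion $\log((1+\lambda)/(2\sqrt{\lambda})) \leq \tfrac{1}{8}(\lambda-1)^2$ near $\lambda=1$. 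Averaging over $i$, the three conditions built into $\mathcal{F}_{1,n}$ match exactly these two pieces. Equipped with this Rényi closeness, the type II probability is handled by Markov on $\sqrt{p_0/p_1}$ under $p_{\beta,\Gamma}$ combined with Cauchy-Schwarz: $\mathbb{E}_{\beta,\Gamma}[\sqrt{p_0/p_1}] = \int \sqrt{p_0\, p_{\beta,\Gamma}} \cdot \sqrt{p_{\beta,\Gamma}/p_1}$ factors into two Bhattacharyya-type integrals, one controlled by the affinity $\int \sqrt{p_0 p_1} = e^{-nR_n(p_0,p_1)}$ up to a correction driven by $R_n(p_{\beta,\Gamma},p_1)$, the other equal to $e^{nR_n(p_{\beta,\Gamma},p_1)}$. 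Chaining these estimates yields the announced $e^{-n\epsilon_n^2/16}$ once the constants in the definition of $\mathcal{F}_{1,n}$ are chosen appropriately.

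The main obstacle is the bookkeeping in the Gaussian Rényi computation, and in particular rewriting the log-determinant term in terms of $\|\Delta_{\Gamma,i}-\Delta_{\Gamma_1,i}\|_F^2$ so as to bring in the pseudo-distance $d_n$ and the norm equivalence from Lemma \ref{Lem_norm_Gamma}. Care is also needed to avoid $\chi^2$-type divergences, which would require $2\Sigma-\Sigma_1$ to be positive definite; chaining Bhattacharyya affinities rather than $\chi^2$ divergences circumvents this issue at the cost of slightly heavier Cauchy-Schwarz manipulations, and is where the precise numerical constants in $\mathcal{F}_{1,n}$ are consumed.
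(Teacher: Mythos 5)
There are two concrete problems with your proposal.

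\textbf{Type I error.} With the threshold $e^{-nR_n(p_0,p_1)/2}$, Markov on $\sqrt{\Lambda_n}$ gives you $\mathbb{E}_0[\overline{\varphi}_n]\leq e^{nR_n/4}\cdot e^{-nR_n}=e^{-3nR_n/4}\leq e^{-3n\epsilon_n^2/4}$, and you call this ``even stronger than the announced $e^{-n\epsilon_n^2}$''. It is strictly weaker: since $3/4<1$, $e^{-3n\epsilon_n^2/4}>e^{-n\epsilon_n^2}$, so your bound does not deliver the statement. The paper avoids this by taking the Neyman--Pearson test with threshold $1$, so that $\mathbb{E}_0[\overline{\varphi}_n]=\mathbb{P}_0(\sqrt{\Lambda_n}\geq 1)\leq\mathbb{E}_0[\sqrt{\Lambda_n}]=e^{-nR_n}$ directly, with no constant lost to Markov.

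\textbf{Type II error.} The key step of your argument asserts that $\mathbb{E}_{\beta,\Gamma}\bigl[\sqrt{p_0/p_1}\bigr]=\int \sqrt{p_0\,p_{\beta,\Gamma}}\cdot\sqrt{p_{\beta,\Gamma}/p_1}$ ``factors into two Bhattacharyya-type integrals''. It does not: an integral of a product is not a product of integrals, and the only standard way to split it, Cauchy--Schwarz, produces $\bigl(\int p_0\,p_{\beta,\Gamma}\bigr)^{1/2}\bigl(\int p_{\beta,\Gamma}/p_1\bigr)^{1/2}$, where $\int p_{\beta,\Gamma}/p_1$ is exactly an $L^2$/$\chi^2$-type quantity and can even diverge (already for two Gaussians with equal variance and different means). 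So the very $\chi^2$ issue you claim to circumvent re-enters, and the argument as written does not close. Your final remark that the bound works ``once the constants in the definition of $\mathcal{F}_{1,n}$ are chosen appropriately'' is also a red flag: the constants $\epsilon_n^2/(16\gamma_n)$ and $\epsilon_n^2/(2J_n\gamma_n)$ are fixed by the statement; the proof must work for those, not for ones you get to tune. The paper's route is different and does go through: keep the indicator, write $\mathbb{E}_{\beta,\Gamma}[1-\overline{\varphi}_n]=\mathbb{E}_1\bigl[(1-\overline{\varphi}_n)\,p_{\beta,\Gamma}/p_1\bigr]$, apply Cauchy--Schwarz to get $\mathbb{E}_1[1-\overline{\varphi}_n]^{1/2}\,\mathbb{E}_1[(p_{\beta,\Gamma}/p_1)^2]^{1/2}\leq e^{-n\epsilon_n^2/2}\,\mathbb{E}_1[(p_{\beta,\Gamma}/p_1)^2]^{1/2}$, and then bound the second moment explicitly; the positive-definiteness of $2\Delta_{\Gamma,i}^*-I$ needed for this $\chi^2$-type computation is not an obstacle but is verified from the eigenvalue control built into $\mathcal{F}_{1,n}$ (all eigenvalues of $\Delta_{\Gamma,i}^*$ lie within $\epsilon_n^2/(2J_n)$ of $1$). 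Your preliminary Rényi-proximity estimate via the Gaussian Bhattacharyya identity is correct and could be used in an alternative argument, but the chaining step you outline after it does not work as stated.
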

This lemma is demonstrated in Appendix \ref{appA}.

Now, we still have to construct an appropriate sieve $\mathcal{B}_n^{*} \times \mathcal{H}_n$ such that the prior mass of $\mathcal{B}_n \backslash \mathcal{B}_n^{*}$ and $\mathcal{H} \backslash \mathcal{H}_n$ have an exponential decrease, and the minimum number $N_n^{*}$ of the small subsets of the alternative needed to cover the sieve satisfies $\log(N_n^*) \lesssim n\epsilon_n^2$.

Define the sieve as follows:
\begin{align*}
    \mathcal{B}_n^{*}&=\left\{ \beta \in \mathcal{B} | s_{\beta} \leq C_1 s_0, \|\beta\|_{\infty} \leq \dfrac{p^{L_2+2}}{K' \|X\|_{*}} \right\}, \\
    \mathcal{H}_n&=\left\{ \Gamma \in \mathcal{H} | n^{-M} \leq \rho_{min}(\Gamma) \leq \rho_{max}(\Gamma) \leq e^{M n \epsilon_n^2} \right\},
\end{align*}
for a constant $M$, and define $\mathcal{F}_{1,n}$ as in Lemma~\ref{Lem_test} with $\gamma_n = n^M/\underline{\rho_Z}^2$. Remark that, with this choice of $\gamma_n$, the last condition $\max_{1 \leq i \leq n} \| \Delta_{\Gamma,i}^{-1} \|_{sp} \leq \gamma_n$ is always satisfy in the sieve. Indeed, $\| \Delta_{\Gamma,i}^{-1} \|_{sp} = \rho_{max}(\Delta_{\Gamma,i}^{-1}) = \rho_{min}^{-1}(\Delta_{\Gamma,i})$. But by Assumption~\ref{hyp_rg_Zi} and since $\Gamma \in \mathcal{H}_n$, $$\rho_{min}(\Delta_{\Gamma,i}) \geq \sigma^2 + \rho_{min}(Z_i \Gamma Z_i^\top) \geq \rho_{min}(\Gamma)\underline{\rho_Z}^2 \geq n^{-M} \underline{\rho_Z}^2.$$ So finally, $\max_{1 \leq i \leq n} \| \Delta_{\Gamma,i}^{-1} \|_{sp} \leq \gamma_n$ for $\Gamma \in \mathcal{H}_n$. 

First, we show that $\mathcal{B}_n \backslash \mathcal{B}_n^{*}$ and $\mathcal{H} \backslash \mathcal{H}_n$ have an exponential decrease.
Using Assumption~\ref{hyp_pi_p}, we obtain that:
\begin{align*}
    \Pi(\mathcal{B}_n \backslash \mathcal{B}_n^{*}) &= \Pi \left( \left\{ \beta \in \mathcal{B} | s_{\beta} \leq C_1 s_0, \|\beta\|_{\infty} > \dfrac{p^{L_2+2}}{K' \|X\|_{*}} \right\} \right) \\
    &= \sum_{S : s \leq C_1 s_0} \dfrac{\pi_p(s)}{\binom{\db}{s}} \int_{\left\{\beta_S : \|\beta_S\|_{\infty} > \dfrac{p^{L_2+2}}{K' \|X\|_{*}} \right\}} g_S(\beta_S)d\beta_S \\
    &\leq \sum_{S : s \leq C_1 s_0} \dfrac{(A_2p^{-A_4})^s}{\binom{\db}{s}} \int_{\left\{\beta_S : \|\beta_S\|_{\infty} > \dfrac{p^{L_2+2}}{K' \|X\|_{*}} \right\}} g_S(\beta_S)d\beta_S \\
    &\leq \sum_{S : s \leq C_1 s_0} \dfrac{(A_2p^{-A_4})^s}{\binom{\db}{s}} \sum_{\ell\in S} \int_{\left\{|\beta_{\ell}| > \dfrac{p^{L_2+2}}{K' \|X\|_{*}} \right\}} \dfrac{\lambda}{2}e^{-\lambda|\beta_{\ell}|}d\beta_{\ell}
\end{align*}
Then, by using the tail probability of the Laplace distribution $$\int_{|x|>t} \frac{\lambda}{2}e^{-\lambda|x|}dx=e^{-\lambda t},$$ for every $t>0$, and since there is $\binom{\db}{s}$ support $S$ of size $s$, we obtain:
\begin{align*}
    \Pi(\mathcal{B}_n \backslash \mathcal{B}_n^{*}) &\leq \sum_{S : s \leq C_1 s_0} \dfrac{(A_2p^{-A_4})^s}{\binom{\db}{s}} s e^{-\lambda \dfrac{p^{L_2+2}}{K' \|X\|_{*}}} \\
    &\leq \sum_{s=1}^{C_1 s_0} s (A_2p^{-A_4})^s e^{-\lambda \dfrac{p^{L_2+2}}{K' \|X\|_{*}}} \\
    &\leq C_1 s_0 e^{-\lambda \dfrac{p^{L_2+2}}{K' \|X\|_{*}}} \sum_{s=1}^{C_1 s_0} (A_2p^{-A_4})^s \\
    &\lesssim s_0 e^{-\lambda \dfrac{p^{L_2+2}}{K' \|X\|_{*}}} \lesssim s_0 e^{-\dfrac{\lambda}{L_1} p^2}
\end{align*}
Thus, $\Pi(\mathcal{B}_n \backslash \mathcal{B}_n^{*})e^{C n \epsilon_n^2} \underset{n \rightarrow \infty}{\longrightarrow} 0$ for every $C>0$ since $n \epsilon_n^2 = s_0\log(p) = o(p^2)$.

Now, 
\begin{align*}
    \Pi(\mathcal{H} \backslash \mathcal{H}_n) &=\Pi\left( \left\{ \Gamma \in \mathcal{H} | \rho_{min}(\Gamma) <  n^{-M}  \text{ or } \rho_{max}(\Gamma) > e^{M n \epsilon_n^2} \right\} \right) \\
    &\leq \Pi\left(\left\{ \Gamma \in \mathcal{H} | \rho_{min}(\Gamma) <  n^{-M}\right\}\right) + \Pi\left(\left\{ \Gamma \in \mathcal{H} | \rho_{max}(\Gamma) > e^{M n\epsilon_n^2} \right\}\right) \\
    &= \Pi\left(\left\{ \Gamma \in \mathcal{H} | \rho_{max}(\Gamma^{-1}) \geq  n^{M}\right\}\right) + \Pi\left(\left\{ \Gamma \in \mathcal{H} | \rho_{min}(\Gamma^{-1}) \leq e^{-M n\epsilon_n^2} \right\}\right) \\
    &\leq b_1 e^{-b_2 n^{b_3 M}} \times b_4e^{-b_5 M n \epsilon_n^2}
\end{align*}
for some constants $b_1, b_2, b_3, b_4, b_5>0$ by Lemma 9.16 of \citet{ghosal2017fundamentals} since $\Gamma^{-1}~\sim~\mathcal{W}_{\dg}(d, \Sigma^{-1})$. So, $\Pi(\mathcal{H} \backslash \mathcal{H}_n)e^{C n \epsilon_n^2} \underset{n \rightarrow \infty}{\longrightarrow} 0$ for every $C>0$, for $M$ large enough.

Finally, we have to prove that the minimum number $N_n^{*}$ of the small subsets of the alternative of the form $\mathcal{F}_{1,n}$ needed to cover the sieve satisfies $\log(N_n^*) \lesssim n\epsilon_n^2$. First, note that for every $\beta, \beta' \in \mathcal{B}$, by Assumption~\ref{hyp_lip}, the inequality $\|X \theta\|_2 \leq \|X\|_{*}\|\theta\|_1$, we have:

\begin{align*}
    \frac{1}{n}\sum_{i=1}^n \|f_i(X_i\beta)-f_i(X_i\beta')\|_2^2 &\leq \frac{1}{n}\sum_{i=1}^n K'^2\|X_i(\beta-\beta')\|_2^2 = \dfrac{K'^2}{n}\|X(\beta-\beta')\|_2^2 \\
    &\leq \dfrac{K'^2}{n}\|X\|_{*}^2 \|\beta-\beta'\|_1^2 \\
    &\leq \dfrac{(\db)^2 K'^2}{n}\|X\|_{*}^2 \|\beta-\beta'\|_{\infty}^2
\end{align*}
Thus, we define 
\begin{align*}
    \mathcal{F}_{1,n}' = \bigg\{ (\beta, \Gamma) \in \mathcal{B} \times \mathcal{H} : \: \dfrac{(\db)^2 K'^2}{n}\|X\|_{*}^2 \|\beta-\beta_1\|_{\infty}^2 + d_n^2(\Gamma, \Gamma_1) \leq \dfrac{1}{16 M_{\text{obs}}^2 \gamma_n^2 n^3}, \max_{1 \leq i \leq n} \| \Delta_{\Gamma,i}^{-1} \|_{sp} \leq \gamma_n \bigg\},
\end{align*}
with the same $(\beta_1, \Gamma_1)$ used in $\mathcal{F}_{1,n}$, and therefore we have $\mathcal{F}_{1,n}' \subset \mathcal{F}_{1,n}$. Thus, since $N_n^{*}$ is the minimum number of the small subsets of the alternative of the form $\mathcal{F}_{1,n}$ needed to cover the sieve $\mathcal{B}_n^{*} \times \mathcal{H}_n$, with $\mathcal{F}_{1,n} \supset \mathcal{F}_{1,n}'$, we have that $N_n^{*}$ is bounded above by the minimum number of the small subsets of the alternative of the form $\mathcal{F}_{1,n}'$ needed to cover the sieve $\mathcal{B}_n^{*} \times \mathcal{H}_n$. This last minimum number is denoted by $N_n'$. In the following, for a pseudo-metric space $(\mathcal{F},d)$, let $N(\epsilon,\mathcal{F},d)$ denote the minimal number of $\epsilon$-balls that cover $\mathcal{F}$.

Now, note that if $(\beta, \Gamma) \in \mathcal{B}_n^{*} \times \mathcal{H}_n$ with $\|\beta -\beta_1\|_{\infty} \leq \dfrac{1}{6n \db K' M_{\text{obs}} \gamma_n \|X\|_*}$ and $d_n(\Gamma,\Gamma_1) \leq \dfrac{1}{6 n^{3/2} M_{\text{obs}} \gamma_n}$, then $(\beta, \Gamma) \in \mathcal{F}_{1,n}'$ (by using that the last condition is satisfy for all $\Gamma \in \mathcal{H}_n$). Thus,
\begin{equation*}
    N_n' \leq N\left(\dfrac{1}{6n \db K' M_{\text{obs}} \gamma_n \|X\|_*}, \mathcal{B}_n^{*}, \| \cdot \|_{\infty} \right) \times N\left(\dfrac{1}{6 n^{3/2} M_{\text{obs}} \gamma_n}, \mathcal{H}_n, d_n \right).
\end{equation*}
Then, \small{\begin{equation}
    \label{eq_borne_Nn}
    \log(N_n^{*}) \leq \log N\left(\dfrac{1}{6n \db K' M_{\text{obs}} \gamma_n \|X\|_*}, \mathcal{B}_n^{*}, \| \cdot \|_{\infty} \right) +  \log N\left(\dfrac{1}{6 n^{3/2} M_{\text{obs}} \gamma_n}, \mathcal{H}_n, d_n \right).
\end{equation}}
Recall that $\mathcal{B}_n^{*}=\left\{ \beta \in \mathcal{B} | s_{\beta} \leq C_1 s_0, \|\beta\|_{\infty} \leq \dfrac{p^{L_2+2}}{K' \|X\|_{*}} \right\}$, to cover $\mathcal{B}_n^{*}$, we have to choose at most $\lfloor C_1 s_0 \rfloor$ non-zero $\beta$ coordinates and we need to recover a ball in $\mathbb{R}^{\lfloor C_1 s_0 \rfloor}$ with radius $\dfrac{p^{L_2+2}}{K' \|X\|_{*}}$, with balls of radius $\dfrac{1}{6n \db K' M_{\text{obs}} \gamma_n \|X\|_*}$. Therefore, 

\begin{align*}
    N\bigg(\dfrac{1}{6n \db K' M_{\text{obs}} \gamma_n \|X\|_*}, \mathcal{B}_n^{*}, \| \cdot \|_{\infty} \bigg) &\leq \binom{\db}{\lfloor C_1 s_0 \rfloor} \left( 6 p^{L_2+3} q n M_{\text{obs}} \gamma_n  \right)^{\lfloor C_1 s_0 \rfloor} \\
    &\lesssim \left( 6 p^{L_2+4} q^2 n M_{\text{obs}} \gamma_n  \right)^{\lfloor C_1 s_0 \rfloor} \text{ as } \binom{\db}{\lfloor C_1 s_0 \rfloor} (\lfloor C_1 s_0 \rfloor)! \leq (\db)^{\lfloor C_1 s_0 \rfloor}
\end{align*}
So, the first term in the right side of equation \eqref{eq_borne_Nn} is bounded by:
\begin{equation*}
    \log N\left(\dfrac{1}{6n \db K' M_{\text{obs}} \gamma_n \|X\|_*}, \mathcal{B}_n^{*}, \| \cdot \|_{\infty} \right) \lesssim s_0 \log(p) = n\epsilon_n^2
\end{equation*}
as $\log(n) \lesssim \log(p)$.

Similarly, for the second term in the right side of equation \eqref{eq_borne_Nn}, note that $\mathcal{H}_n \subset \left\{ \Gamma \in \mathcal{H} : \|\Gamma\|_F \leq \sqrt{\dg} e^{M n \epsilon_n^2} \right\}$, and therefore by assumption \ref{hyp_Zi_sp}: 
\begin{align*}
    \log N\bigg(\dfrac{1}{6 n^{3/2} M_{\text{obs}} \gamma_n}, \mathcal{H}_n, d_n \bigg) &\leq \log N\left(\dfrac{1}{6 n^{3/2} M_{\text{obs}} \gamma_n}, \left\{ \Gamma \in \mathcal{H} : \|\Gamma\|_F \leq \sqrt{\dg} e^{M n \epsilon_n^2} \right\}, d_n \right) \\
    &\leq \log N\left(\dfrac{1}{6 n^{3/2} M_{\text{obs}} \gamma_n \overline{\rho_Z}^2}, \left\{ \Gamma \in \mathcal{H} : \|\Gamma\|_F \leq \sqrt{\dg} e^{M n \epsilon_n^2} \right\}, \| \cdot \|_F \right) \\
    &\lesssim \dg(\dg+1) \log(\sqrt{\dg} e^{M n \epsilon_n^2} n^{3/2} M_{\text{obs}} \gamma_n) \lesssim n\epsilon_n^2.
\end{align*}

Finally, $\log(N_n^*) \lesssim n \epsilon_n^2$. Thus, Lemma D.3 of \citet{ghosal2017fundamentals} can be applied and gives that for every $\epsilon>\epsilon_n$, there exists a test $\varphi_n$ satisfying 
\begin{equation*}
    \mathbb{E}_0[\varphi_n] \leq 2 e^{B_1 n \epsilon_n^2-n\epsilon^2} \text{ and } \sup_{(\beta, \Gamma) \in \mathcal{B}_n^{*} \times \mathcal{H}_n : R_n^{*}(\beta, \Gamma) > \epsilon^2 } \mathbb{E}_{(\beta, \Gamma)}[1-\varphi_n] \leq e^{- n \epsilon^2/16}
\end{equation*}
for some constant $B_1>0$. Then, choosing $\epsilon=C_2 \epsilon_n$ for $C_2$ large enough, we obtain that the test $\varphi_n$ satisfies \eqref{test_phi}, which concludes the proof, as demonstrated above. 

\end{proof}

\subsection{Proof of Theorem \ref{Th_recovery}}
\label{Proof_Th3}

\begin{proof}[Proof of Theorem \ref{Th_recovery}]

The contraction rate of the posterior distribution with respect to the average Rényi divergence $R_n^{*}(\beta, \Gamma)=R_n(p_{\beta, \Gamma},p_0)$ is provided by Theorem \ref{Th_Renyi}. Denote $\epsilon_n = \sqrt{s_0 \log(p)/n}$ this rate. We have that, for all $\beta_0 \in \mathcal{B}_0, \Gamma_0 \in \mathcal{H}_0$,
$$\mathbb{E}_{0}\left[\Pi\left(\mathcal{R}| Y^{(n)}\right)\right] \underset{n \rightarrow \infty}{\longrightarrow} 1.$$
where $\mathcal{R}= \left\{(\beta,\Gamma) \in \mathcal{B} \times \mathcal{H} : R_n^{*}(\beta, \Gamma) \leq C_2 \epsilon_n^2 \right\}$. However, since for all $(\beta,\Gamma) \in\mathcal{B} \times\mathcal{H}$, $p_{\beta, \Gamma}= \prod_{i=1}^n p_{\beta, \Gamma,i}$, with $p_{\beta, \Gamma,i}= \mathcal{N}_{m_i}(f_i(X_i\beta),\Delta_{\Gamma,i})$ in the model~\eqref{NLMM}, the average Rényi divergence is equal to:

\small{
\begin{align*}
    R_n^{*}(\beta, \Gamma) &= R_n(p_{\beta, \Gamma},p_0)=-\dfrac{1}{n}\sum_{i=1}^n \log \left( \int \sqrt{p_{\beta, \Gamma,i}(y_i) p_{0,i}(y_i)}dy_i \right) \\
    &= -\dfrac{1}{n}\sum_{i=1}^n \left[\log \left( 1-g^2(\Delta_{\Gamma,i}, \Delta_{\Gamma_0,i}) \right) - \dfrac{1}{4} \|(\Delta_{\Gamma,i}+ \Delta_{\Gamma_0,i})^{-1/2}(f_i(X_i\beta)-f_i(X_i\beta_0))\|_2^2 \right]
\end{align*}}
where we used the Sherman-Morrison-Woodbury formula, with 
\begin{equation*}
    g^2(\Delta_{\Gamma,i}, \Delta_{\Gamma_0,i})=1-\dfrac{\det(\Delta_{\Gamma,i})^{1/4} \det(\Delta_{\Gamma_0,i})^{1/4}}{\det((\Delta_{\Gamma,i}+\Delta_{\Gamma_0,i})/2)^{1/2}}.
\end{equation*}
Remark that for all $1\leq i \leq n$, $g^2(\Delta_{\Gamma,i}, \Delta_{\Gamma_0,i})\geq0$ since, with $\Delta_{\Gamma,i}^{*} ~ = ~\Delta_{\Gamma_0,i}^{-1/2} \Delta_{\Gamma,i} \Delta_{\Gamma_0,i}^{-1/2}$
\begin{align*}
    \dfrac{\det(\Delta_{\Gamma,i})^{1/4} \det(\Delta_{\Gamma_0,i})^{1/4}}{\det((\Delta_{\Gamma,i}+\Delta_{\Gamma_0,i})/2)^{1/2}}&=\left( \dfrac{1}{2^{m_i}}\det(\Delta_{\Gamma,i}^{*^{1/2}} + \Delta^{*^{-1/2}}_{\Gamma,i} )\right)^{-1/2} \\
    &= \left( \prod_{k=1}^{m_i} \dfrac{1}{2}(d_k^{1/2}+d_k^{-1/2})\right)^{-1/2} \leq 1
\end{align*}
where $d_k$ are the eigenvalues of $\Delta_{\Gamma,i}^{*}$, and using that $\forall x \leq 0, x+x^{-1} \geq 2$.

Thus, by Theorem \ref{Th_Renyi}, this implies that, for $(\beta,\Gamma) \in \mathcal{R}$:
\begin{align*}
    \epsilon_n^2 &\gtrsim -\dfrac{1}{n}\sum_{i=1}^n \log \left( 1-g^2(\Delta_{\Gamma,i}, \Delta_{\Gamma_0,i}) \right) \\
    &\gtrsim \dfrac{1}{n}\sum_{i=1}^n g^2(\Delta_{\Gamma,i}, \Delta_{\Gamma_0,i}) 
\end{align*}
since $\log(1-x)\leq -x$ for all $x\geq 0$. Now, by Lemma $10$ of \citet{jeong_unified_2021}, for each $i \in \{1, \dots, n\}$, we obtain that $g^2(\Delta_{\Gamma,i}, \Delta_{\Gamma_0,i}) \gtrsim \|\Delta_{\Gamma,i} -\Delta_{\Gamma_0,i} \|_F^2 $ if $g^2(\Delta_{\Gamma,i}, \Delta_{\Gamma_0,i})$ is small enough. Thus, by defining $I_{n,\delta}=\{ 1 \leq i \leq n : g^2(\Delta_{\Gamma,i}, \Delta_{\Gamma_0,i}) \geq \delta_n\}$, for $\delta_n=o(1)$ such that $\epsilon_n^2=o(\delta_n)$, we have:

\begin{align*}
    \epsilon_n^2 &\gtrsim \dfrac{1}{n}\sum_{i\notin I_{n,\delta}} \|\Delta_{\Gamma,i} -\Delta_{\Gamma_0,i} \|_F^2 + \dfrac{1}{n}\sum_{i \in I_{n,\delta}} g^2(\Delta_{\Gamma,i}, \Delta_{\Gamma_0,i}) \\
    &\gtrsim \dfrac{1}{n}\sum_{i\notin I_{n,\delta}} \|\Delta_{\Gamma,i} -\Delta_{\Gamma_0,i} \|_F^2 = \dfrac{1}{n}\sum_{i=1}^n \|\Delta_{\Gamma,i} -\Delta_{\Gamma_0,i} \|_F^2 - \dfrac{1}{n}\sum_{i\in I_{n,\delta}} \|\Delta_{\Gamma,i} -\Delta_{\Gamma_0,i} \|_F^2 \\
    &\geq M_1 d^2_n(\Gamma,\Gamma_0) - M_1 \dfrac{|I_{n,\delta}|}{n} \max_{1\leq i \leq n} \|\Delta_{\Gamma,i} -\Delta_{\Gamma_0,i} \|_F^2 \\
    & \geq M_1 d^2_n(\Gamma,\Gamma_0) - M_2 \dfrac{\epsilon_n^2}{\delta_n} \max_{1\leq i \leq n} \|\Delta_{\Gamma,i} -\Delta_{\Gamma_0,i} \|_F^2 \\
    & \geq (M_1- M_3 \dfrac{\epsilon_n^2}{\delta_n}) d^2_n(\Gamma,\Gamma_0)
\end{align*}
where we used that $\dfrac{|I_{n,\delta}|}{n} \lesssim \dfrac{\epsilon_n^2}{\delta_n}$ since
$$\epsilon_n^2 ~\gtrsim~ \dfrac{1}{n}\sum_{i\in I_{n,\delta}} g^2(\Delta_{\Gamma,i}, \Delta_{\Gamma_0,i})~ \gtrsim~ \dfrac{|I_{n,\delta}|}{n} \times \delta_n,$$
and thanks to Lemma \ref{Lem_norm_Gamma} of Appendix \ref{appB} for the last inequality. Then, since $\epsilon_n^2=o(\delta_n)$, $M_1-M_3\epsilon_n^2/\delta_n$ is bounded away from $0$, the last inequation implies that $\epsilon_n \gtrsim d_n(\Gamma, \Gamma_0)$, which proves the first assertion of Theorem~\ref{Th_recovery}. Now, thanks to Lemma \ref{Lem_norm_Gamma} of Appendix \ref{appB}, we have also that $\epsilon_n \gtrsim \|\Gamma - \Gamma_0\|_F$, which proves the second assertion of Theorem~\ref{Th_recovery}.

Also, by Theorem \ref{Th_Renyi}, for $(\beta,\Gamma) \in \mathcal{R}$, since $g^2(\Delta_{\Gamma,i}, \Delta_{\Gamma_0,i}) \geq 0$, we have that:
\begin{align*}
    \epsilon_n^2 &\gtrsim \dfrac{M_4}{4n}\sum_{i=1}^n \|(\Delta_{\Gamma,i}+\Delta_{\Gamma_0,i})^{-1/2}(f_i(X_i\beta)-f_i(X_i\beta_0))\|_2^2 \\
    &\geq \dfrac{M_4}{4n}\sum_{i=1}^n \rho_{min}((\Delta_{\Gamma,i}+\Delta_{\Gamma_0,i})^{-1} )\|f_i(X_i\beta)-f_i(X_i\beta_0)\|_2^2 \\
\end{align*}
using that for $A$ symmetric matrix, $\|Ax\|_2^2 \geq \rho_{min}(A^2) \|x\|_2^2$ for all vector $x$.

Now, since $\rho_{min}((\Delta_{\Gamma,i}+\Delta_{\Gamma_0,i})^{-1})=\rho_{max}^{-1}(\Delta_{\Gamma,i}+\Delta_{\Gamma_0,i})$, we want to upper bound $\rho_{max}(\Delta_{\Gamma,i}+\Delta_{\Gamma_0,i})$ uniformly across $i \in \{1, \dots, n\}$. Thus, by using Weyl's inequality:

\begin{equation*}
    \rho_{max}(\Delta_{\Gamma,i}+\Delta_{\Gamma_0,i}) \leq \rho_{max}(\Delta_{\Gamma,i}-\Delta_{\Gamma_0,i}) + 2\rho_{max}(\Delta_{\Gamma_0,i}) \leq \|\Delta_{\Gamma,i}-\Delta_{\Gamma_0,i}\|_F + 2 \overline{\rho_{\Delta}}
\end{equation*}
by Lemma \ref{vp_Delta} of Appendix \ref{appB} where $\overline{\rho_{\Delta}}$ denotes the uniform upper bound of the eigenvalues of $(\Delta_{\Gamma_0,i})_i$. Thus, by Lemma \ref{Lem_norm_Gamma} of Appendix \ref{appB},
\begin{align*}
    \max_{1 \leq i \leq n} \rho_{max}(\Delta_{\Gamma,i}+\Delta_{\Gamma_0,i}) &\leq \max_{1 \leq i \leq n} \|\Delta_{\Gamma,i}-\Delta_{\Gamma_0,i}\|_F + 2 \overline{\rho_{\Delta}} \\
    &\leq d_n(\Gamma,\Gamma_0) + 2 \overline{\rho_{\Delta}} \leq C_3 \epsilon_n + 2 \overline{\rho_{\Delta}},
\end{align*}
by the first assertion of Theorem \ref{Th_recovery}.
Finally, we obtain that:
\begin{align*}
    \epsilon_n^2 &\geq \dfrac{M_4}{4n (C_3 \epsilon_n + 2 \overline{\rho_{\Delta}})}\sum_{i=1}^n\|f_i(X_i\beta)-f_i(X_i\beta_0)\|_2^2,
\end{align*}
and since $C_3 \epsilon_n + 2 \overline{\rho_{\Delta}}  \underset{n \rightarrow \infty}{\longrightarrow} 2 \overline{\rho_{\Delta}}$, we finally obtain that for $n$ large enough $$\epsilon_n ~\gtrsim ~\sqrt{\dfrac{1}{n}\sum_{i=1}^n\|f_i(X_i\beta)-f_i(X_i\beta_0)\|_2^2},$$ which gives the last assertion of Theorem \ref{Th_recovery}.

\end{proof}

\subsection{Proof of Theorem \ref{Th_beta}}
\label{Proof_Th4}

\begin{proof}[Proof of Theorem \ref{Th_beta}]
Let us consider $\beta_0 \in \mathcal{B}_0$ and $\Gamma_0 \in \mathcal{H}_0$.
The contraction rate of the posterior distribution for the prediction term is provided by Theorem \ref{Th_recovery} and we have in particular that: for all $\beta_0 \in \mathcal{B}_0, \Gamma_0 \in \mathcal{H}_0$,
$$\mathbb{E}_{0}\left[\Pi\left( \mathcal{P}_n \bigg| Y^{(n)}\right)\right] \underset{n \rightarrow \infty}{\longrightarrow} 1,$$
where $\mathcal{P}_n=\left\{\beta : \frac{1}{n}\sum_{i=1}^n \|f_i(X_i\beta)-f_i(X_i\beta_0)\|_2^2 \lesssim\epsilon_n^2\right\}$ and $\epsilon_n=\sqrt{s_0\log(p)/n}$. 

Thus, for $\beta \in \mathcal{P}_n$, and for $n$ large enough we can apply Assumption \ref{hyp_identifiability} for $\delta=\epsilon_n^2$ and we obtain:
$\|X(\beta-\beta_0)\|_2^2 \lesssim  n\epsilon_n^{2\eta}$ which gives the first assertion of Theorem \ref{Th_beta}.

Remark that for $\beta$ such as $s_{\beta} \leq C_1 s_0$, we have $s_{\beta-\beta_0} \leq s_{\beta} + s_{0} \leq (C_1+1)s_0$, thus by Theorem \ref{Th_dim} and definition of the uniform compatibility number $\phi_1$ and the smallest scaled singular value $\phi_2$, we obtain that:
\begin{align*}
    \epsilon_n^{2\eta} &\gtrsim \dfrac{\|X\|_{*}^2 \phi_1^2((C_1+1)s_0)}{s_0 n} \|\beta-\beta_0\|_1^2,  \\
    \text{and }\epsilon_n^{2\eta} &\gtrsim \dfrac{\|X\|_{*}^2 \phi_2^2((C_1+1)s_0)}{n} \|\beta-\beta_0\|_2^2,
\end{align*}
which proves the last two assertions of the theorem.
\end{proof}

\section*{Acknowledgements}
This work was funded by the Stat4Plant project ANR-20-CE45-0012. The authors thank Professor Ismaël Castillo (Sorbonne Université) for his helpful comments and recommendations about this work. The authors would like to thank the anonymous referees, an Associate Editor and the Editor for their constructive comments that improved the quality of this paper.

\section*{Funding}
This work was funded by the Stat4Plant project ANR-20-CE45-0012.

\bibliography{bibliography}
\bibliographystyle{apalike}

\begin{appendices}

\setcounter{lemma}{0}
\renewcommand{\thelemma}{B\arabic{lemma}}
\setcounter{theorem}{0}
\renewcommand{\thetheorem}{B\arabic{theorem}}

\section{Proofs of technical lemmas}\label{appA}
\subsection{Proof of Lemma \ref{Lem_denom}}
\label{Proof_Lem1}

First, we define a Kullback-Leibler neighbourhood around $p_{0,i}$
\small{
\[ \mathcal{D}_n = \left\{(\beta,\Gamma)\in \mathcal{B} \times \mathcal{H} \: \bigg| \: \sum_{i=1}^n K(p_{0,i},p_{\beta,\Gamma,i}) \leq c_1 \log(n), \: \sum_{i=1}^n V(p_{0,i},p_{\beta,\Gamma,i}) \leq c_1 \log(n) \right\}\]}
for a constant $c_1$ large enough.
Then,  by Lemma 10 of \citet{ghosal_convergence_2007}, we have that, for every $C>0$, 
\begin{equation}\label{ineq:int_Dn}
\mathbb{P}_0 \left(\int_{\mathcal{D}_n} \Lambda_n(\beta,\Gamma) d\Pi(\beta,\Gamma) \leq e^{-(1+C)c_1 \log(n)} \Pi(\mathcal{D}_n) \right)\leq \dfrac{1}{C^2 c_1 \log(n)}.
\end{equation}

\raggedright
The proof of the lemma consists in showing that there exists $M>0$ such that $e^{-(1+C)c_1 \log(n)}\Pi(\mathcal{D}_n) \gtrsim \pi_p(s_0) e^{-M(s_0 \log(p) + \log(n))}$. Indeed by combining this result with Inequality~\eqref{ineq:int_Dn}, since $\int_{\mathcal{D}_n} \Lambda_n(\beta,\Gamma) d\Pi(\beta,\Gamma) \leq \int \Lambda_n(\beta,\Gamma) d\Pi(\beta,\Gamma)$, we have that
\begin{align*}
\mathbb{P}_0 \left(\int \Lambda_n(\beta,\Gamma) d\Pi(\beta,\Gamma) \geq   \pi_p(s_0) e^{-M(s_0 \log(p) + \log(n))} \right) &\geq \mathbb{P}_0 \left(\int_{\mathcal{D}_n} \Lambda_n(\beta,\Gamma) d\Pi(\beta,\Gamma) \geq   \pi_p(s_0) e^{-M(s_0 \log(p) + \log(n))} \right) \\
&\geq 1 - \mathbb{P}_0 \left(\int_{\mathcal{D}_n} \Lambda_n(\beta,\Gamma) d\Pi(\beta,\Gamma) \leq e^{-(1+C)c_1 \log(n)} \Pi(\mathcal{D}_n) \right) \\
&\geq 1 - \dfrac{1}{C^2 c_1 \log(n)} \underset{n \rightarrow \infty}{\longrightarrow} 1,
\end{align*}
that concludes the proof.
Thus, it remains to show that $e^{-(1+C)c_1 \log(n)}\Pi(\mathcal{D}_n) \geq \pi_p(s_0) e^{-M(s_0 \log(p) + \log(n))}$, or, more precisely, we need to exhibit a lower bound of $\Pi(\mathcal{D}_n)$.
    
In the non-linear marginal mixed-effects model, we have that $p_{\beta,\Gamma,i}=\mathcal{N}(f_i(X_i \beta), \Delta_{\Gamma,i})$, with $\Delta_{\Gamma,i}= Z_i \Gamma Z_i^\top + \sigma^2 I_{m_i}$. By Lemma~9 of \citet{jeong_unified_2021}, the Kullback-Leibler divergence and variation of the $i$-th individual are respectively expressed as: 
    \begin{align*}
    K(p_{0,i},p_{\beta,\Gamma,i})=& \dfrac{1}{2} \left[ \log \left( \dfrac{| \Delta_{\Gamma,i}|}{|\Delta_{\Gamma_0,i} |} \right) + \text{Tr}(\Delta_{\Gamma_0,i} \Delta_{\Gamma,i}^{-1}) -m_i \right] + \dfrac{1}{2}\left\| \Delta_{\Gamma,i}^{-1/2}\left( f_i(X_i\beta)-f_i(X_i\beta_0)\right)\right\|_2^2 , \\
    V(p_{0,i},p_{\beta,\Gamma,i})=& \dfrac{1}{2} \left[ \text{Tr}\left( \Delta_{\Gamma_0,i} \Delta_{\Gamma,i}^{-1} \Delta_{\Gamma_0,i} \Delta_{\Gamma,i}^{-1} \right) -2 \text{Tr}(\Delta_{\Gamma_0,i} \Delta_{\Gamma,i}^{-1}) +m_i \right] + \left\| \Delta_{\Gamma_0,i}^{1/2}\Delta_{\Gamma,i}^{-1}\left( f_i(X_i\beta)-f_i(X_i\beta_0)\right)\right\| _2^2.
    \end{align*}
Then, by denoting $\rho_{i,k}$, for $k=1, \ldots, m_i$, the eigenvalues of $ \Delta_{\Gamma_0,i}^{1/2} \Delta_{\Gamma,i}^{-1} \Delta_{\Gamma_0,i}^{1/2}$, we obtain that
    \begin{align*}
    K(p_{0,i},p_{\beta,\Gamma,i})&= \dfrac{1}{2} \left[ - \sum_{k=1}^{m_i} \log(\rho_{i,k}) - \sum_{k=1}^{m_i} (1 -\rho_{i,k}) + \| \Delta_{\Gamma,i}^{-1/2}\left( f_i(X_i\beta)-f_i(X_i\beta_0)\right)\|_2^2 \right] \\
    V(p_{0,i},p_{\beta,\Gamma,i})&= \dfrac{1}{2} \sum_{k=1}^{m_i} (1- \rho_{i,k})^2 + \| \Delta_{\Gamma_0,i}^{1/2}\Delta_{\Gamma,i}^{-1}\left( f_i(X_i\beta)-f_i(X_i\beta_0)\right)\|_2^2.
    \end{align*}
Our goal is to find a lower bound of $\Pi(\mathcal{D}_n)$, so we want to find an upper bound of $\sum_{i=1}^n K(p_{0,i},p_{\beta,\Gamma,i})$ and $\sum_{i=1}^n V(p_{0,i},p_{\beta,\Gamma,i})$. 
    
Let us first focus on the term $V(p_{0,i},p_{\beta,\Gamma,i})$. By Lemma~10 of \citet{jeong_unified_2021}, we obtain that: $$ \sum_{k=1}^{m_i} (1-\rho_{i,k}^{-1})^2 \leq \rho_{min}^{-2}(\Delta_{\Gamma_0,i}) \| \Delta_{\Gamma,i} - \Delta_{\Gamma_0,i}\|_F^2.$$
By using Weyl's inequality and Assumptions~\ref{hyp_Zi_sp} and \ref{hyp_gamma0}, Lemma~\ref{vp_Delta} shows that there exist $\underline{\rho_0}>0$ and $\overline{\rho_0}>0$ such that:
\begin{equation}\label{appli:lemmaA2}
\underline{\rho_0} \leq \min_i \rho_{min}(\Delta_{\Gamma_0,i}) \leq \max_i \rho_{max}(\Delta_{\Gamma_0,i}) \leq \overline{\rho_0}. 
\end{equation}
Thus $$ \max_i \sum_{k=1}^{m_i} (1-\rho_{i,k}^{-1})^2 \leq  \underline{\rho_0}^{-2} \max_i \| \Delta_{\Gamma,i} - \Delta_{\Gamma_0,i}\|_F^2,$$
and in particular, $\max_{i,k} (1-\rho_{i,k}^{-1})^2 \leq  \underline{\rho_0}^{-2} \max_i \| \Delta_{\Gamma,i} - \Delta_{\Gamma_0,i}\|_F^2$.
Thus, if $\max_i \| \Delta_{\Gamma,i} - \Delta_{\Gamma_0,i}\|_F^2 \rightarrow 0$ on $\mathcal{D}_n$, we will have that $\max_{i,k} |1-\rho_{i,k}^{-1}|\rightarrow 0$, that is each $\rho_{i,k}$ tends to $1$ and so:
\begin{equation}
\label{Eq_borne1_V}
\sum_{k=1}^{m_i} (1-\rho_{i,k})^2 \lesssim \sum_{k=1}^{m_i} (1-\rho_{i,k}^{-1})^2 \leq  \underline{\rho_0}^{-2} \| \Delta_{\Gamma,i} - \Delta_{\Gamma_0,i}\|_F^2 \lesssim \| \Delta_{\Gamma,i} - \Delta_{\Gamma_0,i}\|_F^2,
\end{equation}
where the first inequality is due to $|1-x^{-1}| \lesssim |1-x| \lesssim |1-x^{-1}|$ for $x \rightarrow 1$, which enables to bound the first term of $V(p_{0,i},p_{\beta,\Gamma,i})$.

Now, prove that $\max_i \| \Delta_{\Gamma,i} - \Delta_{\Gamma_0,i}\|_F^2$ tends to $0$ on $\mathcal{D}_n$. We introduce the set $I_{n,\delta}=\{1 \leq i \leq n \vert \sum_{k=1}^{m_i} (1- \rho_{i,k})^2 \geq \delta_n\}$, for $\delta_n=o(1)$ and $\log(n)/n=o(\delta_n)$.  We denote by $|I_{n,\delta}|$ its cardinal.  Then for $(\beta, \Gamma) \in \mathcal{D}_n$, since $\sum_{i=1}^n V(p_{0,i},p_{\beta,\Gamma,i}) \leq c_1 \log(n)$, we have that, on the one hand:
    \begin{equation*}
        \sum_{i=1}^n  \sum_{k=1}^{m_i} (1- \rho_{i,k})^2 \leq c_1 \log(n),
    \end{equation*}
and on the other hand,
    \begin{align*}
    \sum_{i=1}^n  \sum_{k=1}^{m_i} (1- \rho_{i,k})^2 &= \sum_{i \in I_{n,\delta}}  \sum_{k=1}^{m_i} (1- \rho_{i,k})^2 + \sum_{i \notin I_{n,\delta}}  \sum_{k=1}^{m_i} (1- \rho_{i,k})^2 \\
    &\gtrsim \delta_n |I_{n,\delta}| + \sum_{i \notin I_{n,\delta}}  \sum_{k=1}^{m_i} \left(1- \dfrac{1}{\rho_{i,k}}\right)^2
    \end{align*}
    since for $i \notin I_{n,\delta}$,  $\sum_{k=1}^{m_i} (1- \rho_{i,k})^2 < \delta_n$, so each $|1-\rho_{i,k}|$ is less than $\sqrt{\delta_n}$ with $\delta_n=o(1)$, and we have that $|1-x^{-1}| \lesssim |1-x| \lesssim |1-x^{-1}|$ for $x \rightarrow 1$, so $|1- \rho_{i,k} | \gtrsim |1-\rho_{i,k}^{-1}|$. 
        Then, by using Lemma~10 of \citet{jeong_unified_2021}, we obtain that
    
    \[\sum_{i=1}^n  \sum_{k=1}^{m_i} (1- \rho_{i,k})^2 \gtrsim \delta_n |I_{n,\delta}| + \sum_{i \notin I_{n,\delta}}  \dfrac{1}{\rho_{\text{max}}^2(\Delta_{\Gamma_0,i})} \| \Delta_{\Gamma,i}-\Delta_{\Gamma_0,i} \|_F^2 \]
    
    By \eqref{appli:lemmaA2}, we obtain that
    \begin{equation*}
        c_1 \log(n) \geq \sum_{i=1}^n  \sum_{k=1}^{m_i} (1- \rho_{i,k})^2 \gtrsim \delta_n |I_{n,\delta}| + \dfrac{1}{\overline{\rho_0}^2}\sum_{i \notin I_{n,\delta}} \| \Delta_{\Gamma,i}-\Delta_{\Gamma_0,i} \|_F^2 
    \end{equation*}
    that is equivalent to
    \begin{equation*}
        \dfrac{\log(n)}{n} \gtrsim \dfrac{1}{n}\sum_{i=1}^n  \sum_{k=1}^{m_i} (1- \rho_{i,k})^2 \gtrsim \delta_n \dfrac{|I_{n,\delta}|}{n} + \dfrac{1}{n\overline{\rho_0}^2}\sum_{i \notin I_{n,\delta}} \| \Delta_{\Gamma,i}-\Delta_{\Gamma_0,i} \|_F^2
    \end{equation*}

In particular, $\delta_n \dfrac{|I_{n,\delta}|}{n} \lesssim \dfrac{\log(n)}{n}$, which implies $|I_{n,\delta}| \lesssim \log(n)/\delta_n$. We have also that $\dfrac{\log(n)}{n}~\gtrsim~\dfrac{1}{n\overline{\rho_0}^2}\sum_{i \notin I_{n,\delta}} \| \Delta_{\Gamma,i}-\Delta_{\Gamma_0,i} \|_F^2$, that is 
\begin{equation}
    \label{Eq_int1}
    \dfrac{\log(n)}{n} \gtrsim \dfrac{1}{n}\sum_{i \notin I_{n,\delta}} \| \Delta_{\Gamma,i}-\Delta_{\Gamma_0,i} \|_F^2.
\end{equation}
Then,
    \begin{align*}
        \dfrac{1}{n}\sum_{i \notin I_{n,\delta}} \| \Delta_{\Gamma,i}-\Delta_{\Gamma_0,i} \|_F^2 &= \dfrac{1}{n}\sum_{i=1}^n \| \Delta_{\Gamma,i}-\Delta_{\Gamma_0,i} \|_F^2 - \dfrac{1}{n}\sum_{i \in I_{n,\delta}} \| \Delta_{\Gamma,i}-\Delta_{\Gamma_0,i} \|_F^2 \\
        &\geq d_n^2(\Gamma,\Gamma_0) - \dfrac{|I_{n,\delta}|}{n} \max_{1 \leq i \leq n}\| \Delta_{\Gamma,i}-\Delta_{\Gamma_0,i} \|_F^2 
    \end{align*}
    
    By Lemma~\ref{Lem_norm_Gamma}, with Assumptions~\ref{hyp_ni_sup_q}, \ref{hyp_rg_Zi} and \ref{hyp_Zi_sp}, for $\Gamma_1,\Gamma_2\in\mathcal{H}$, we have that
$$\max_i \|\Delta_{\Gamma_1,i}-\Delta_{\Gamma_2,i} \|_F^2 \lesssim  \|\Gamma_1-\Gamma_2 \|_F^2 \lesssim d_n^2(\Gamma_1,\Gamma_2).$$
    
    Thus, there exist some constants $M_1>0$ and $M_2>0$:
    \begin{align}
        \dfrac{1}{n} \sum_{i \notin I_{n,\delta}} \| \Delta_{\Gamma,i}-\Delta_{\Gamma_0,i} \|_F^2 &\geq M_1 \max_{1 \leq i \leq n}\| \Delta_{\Gamma,i}-\Delta_{\Gamma_0,i} \|_F^2 - \dfrac{|I_{n,\delta}|}{n} \max_{1 \leq i \leq n}\| \Delta_{\Gamma,i}-\Delta_{\Gamma_0,i} \|_F^2 \nonumber\\
        &\geq \left(M_1-\dfrac{|I_{n,\delta}|}{n}\right) \max_{1 \leq i \leq n}\| \Delta_{\Gamma,i}-\Delta_{\Gamma_0,i} \|_F^2 \nonumber\\
        &\geq \left(M_1-M_2\dfrac{\log(n)}{n\delta_n}\right) \max_{1 \leq i \leq n}\| \Delta_{\Gamma,i}-\Delta_{\Gamma_0,i} \|_F^2   \label{up_Delta}
    \end{align}
    since $|I_{n,\delta}| \lesssim \log(n)/\delta_n$. Finally, by combining \eqref{Eq_int1} and \eqref{up_Delta}, and since $\log(n)/n=o(\delta_n)$, we obtain that, on $\mathcal{D}_n$, $$\max_{1 \leq i \leq n}\| \Delta_{\Gamma,i}-\Delta_{\Gamma_0,i} \|_F^2 \lesssim \dfrac{\log(n)}{n} \underset{n \rightarrow \infty}{\longrightarrow} 0.$$
Hence, by using Equation~\eqref{Eq_borne1_V}, we obtain that:
\begin{align*}
    \dfrac{1}{n} \sum_{i=1}^n V(p_{0,i},p_{\beta,\Gamma,i}) &= \dfrac{1}{2n} \sum_{i=1}^n \sum_{k=1}^{m_i} (1- \rho_{i,k})^2 + \dfrac{1}{n} \sum_{i=1}^n \| \Delta_{\Gamma_0,i}^{1/2}\Delta_{\Gamma,i}^{-1}\left( f_i(X_i\beta)-f_i(X_i\beta_0)\right)\|_2^2 \\
    & \lesssim \dfrac{1}{n} \sum_{i=1}^n \| \Delta_{\Gamma,i} - \Delta_{\Gamma_0,i}\|_F^2 + \dfrac{1}{n} \sum_{i=1}^n \| \Delta_{\Gamma_0,i}^{1/2}\|_{sp}^2 \|\Delta_{\Gamma,i}^{-1}\|_{sp}^2 \|f_i(X_i\beta)-f_i(X_i\beta_0)\|_2^2
\end{align*}
since $\|Ax\|_2 \leq \|A\|_{sp} \|x\|_2$. Then, by Lemma \ref{vp_Delta}, we have that $\| \Delta_{\Gamma_0,i}^{1/2}\|_{sp}^2=\rho_{max}(\Delta_{\Gamma_0,i}) \lesssim 1$ for each $i \in \{1, \dots, n\}$. Moreover, on $\mathcal{D}_n$,  by Lemma \ref{Lem_rho_max},
\begin{align*}
    \|\Delta_{\Gamma,i}^{-1}\|_{sp} &= \rho_{max}(\Delta_{\Gamma,i}^{-1}) = \rho_{max}(\Delta_{\Gamma_0,i}^{-1/2} \Delta_{\Gamma_0,i}^{1/2} \Delta_{\Gamma,i}^{-1} \Delta_{\Gamma_0,i}^{1/2} \Delta_{\Gamma_0,i}^{-1/2}) \\
    &\leq \rho_{max}(\Delta_{\Gamma_0,i}^{1/2} \Delta_{\Gamma,i}^{-1} \Delta_{\Gamma_0,i}^{1/2}) \| \Delta_{\Gamma_0,i}^{-1/2}\|_{sp}^2 \\
    & \lesssim \rho_{max}(\Delta_{\Gamma_0,i}^{1/2} \Delta_{\Gamma,i}^{-1} \Delta_{\Gamma_0,i}^{1/2}) = \max_k \rho_{i,k} \lesssim 1
\end{align*}
since $\| \Delta_{\Gamma_0,i}^{-1/2}\|_{sp}^2 = \rho_{max}(\Delta_{\Gamma_0,i}^{-1})=\rho_{min}(\Delta_{\Gamma_0,i})^{-1} \leq \underline{\rho_0}^{-1}$ by Lemma \ref{vp_Delta}, and since each $\rho_{i,k}$ tends to $1$ on $\mathcal{D}_n$. Then, we have to control the term $\|f_i(X_i\beta)-f_i(X_i\beta_0)\|_2^2$ for a non-linear function $f$. Thus, using Assumption~\ref{hyp_lip}, that is each $f_i$ is $K'$-Lipschitz, we deduce that:
$$\dfrac{1}{n} \sum_{i=1}^n V(p_{0,i},p_{\beta,\Gamma,i}) \lesssim d_n^2(\Gamma, \Gamma_0) + \dfrac{K'^2}{n} \|X(\beta-\beta_0)\|_2^2.$$

Let us now focus on the term $K(p_{0,i},p_{\beta,\Gamma,i})$. We have shown that on $\mathcal{D}_n$ each $|1-\rho_{i,k}|$ is small for each $i$ and $k$, so: $\log(\rho_{i,k})~=~\log(1-(1-\rho_{i,k})) \sim  -(1-\rho_{i,k}) - \dfrac{(1-\rho_{i,k})^2}{2}$, and so $-\log(\rho_{i,k})-(1-\rho_{i,k})\sim \dfrac{(1-\rho_{i,k})^2}{2}$. Thus, by using Inequality~\eqref{Eq_borne1_V}:
\begin{align*}
    \dfrac{1}{n} K(p_{0,i},p_{\beta,\Gamma,i})&= \dfrac{1}{2n} \sum_{i=1}^n \left[ - \sum_{k=1}^{m_i} \log(\rho_{i,k}) - \sum_{k=1}^{m_i} (1 -\rho_{i,k}) + \| \Delta_{\Gamma,i}^{-1/2}\left( f_i(X_i\beta)-f_i(X_i\beta_0)\right)\|_2^2 \right] \\
    &\lesssim \dfrac{1}{2n} \sum_{i=1}^n \left( \dfrac{1}{2} \| \Delta_{\Gamma,i} - \Delta_{\Gamma_0,i}\|_F^2 + \|\Delta_{\Gamma,i}^{-1/2}\|_{sp}^2 \|f_i(X_i\beta)-f_i(X_i\beta_0)\|_2^2 \right) \\
    &\lesssim d_n^2(\Gamma,\Gamma_0) + \dfrac{K'^2}{n} \|X(\beta-\beta_0)\|_2^2,
\end{align*}
by Assumption~\ref{hyp_lip} and since $\|\Delta_{\Gamma,i}^{-1}\|_{sp} \lesssim 1$ on $\mathcal{D}_n$. Thus, we obtain finally that $\frac{1}{n} K(p_{0,i},p_{\beta,\Gamma,i})$ and $\frac{1}{n} V(p_{0,i},p_{\beta,\Gamma,i})$ are bounded above by $d_n^2(\Gamma,\Gamma_0)~+~\dfrac{K'^2}{n}~ \|X~(\beta~-~\beta_0)\|_2^2$ up to a multiplicative constant. Then, for $c_1$ large enough,
\begin{align*}
    \Pi(\mathcal{D}_n) &\geq \Pi \left( (\beta,\Gamma)\in \mathcal{B} \times \mathcal{H} \bigg| d_n^2(\Gamma,\Gamma_0)+\dfrac{K'^2}{n} \|X(\beta-\beta_0)\|_2^2 \leq 2 \dfrac{\log(n)}{n}\right) \\
    &\geq \Pi \left(\Gamma\in \mathcal{H} \bigg| d_n^2(\Gamma,\Gamma_0) \leq \dfrac{\log(n)}{n}\right) \Pi \left( \beta\in \mathcal{B} \bigg| \dfrac{K'^2}{n} \|X(\beta-\beta_0)\|_2^2 \leq \dfrac{\log(n)}{n}\right) \\
    &\geq \Pi \left( \Gamma\in \mathcal{H} \bigg| d_n^2(\Gamma,\Gamma_0) \leq \dfrac{\log(n)}{n}\right) \Pi \left( \beta\in \mathcal{B} \bigg| \dfrac{K'^2}{n} \|X\|_{*}^2 \|\beta-\beta_0\|_1^2 \leq \dfrac{\log(n)}{n}\right) \\
\end{align*}
since $\|X\theta\|_2 \leq \|X\|_{*} \|\theta\|_1$.
For the first term, we have that: 
\begin{align*}
    \Pi \left( \Gamma\in \mathcal{H} \bigg| d_n^2(\Gamma,\Gamma_0) \leq \dfrac{\log(n)}{n}\right) &= \Pi \left( \Gamma\in \mathcal{H} \bigg| \dfrac{1}{n} \sum_{i=1}^n \|\Delta_{\Gamma,i}-\Delta_{\Gamma_0,i}\|_F^2 \leq \dfrac{\log(n)}{n}\right) \\
    &\geq \Pi \left( \Gamma\in \mathcal{H} \bigg| \max_i \|\Delta_{\Gamma,i}-\Delta_{\Gamma_0,i}\|_F^2 \leq \dfrac{\log(n)}{n}\right) \\
    &\geq \Pi \left( \Gamma\in \mathcal{H} \bigg| \|\Gamma-\Gamma_0\|_F^2 \leq \dfrac{1}{\overline{\rho_Z}^4}\dfrac{\log(n)}{n}\right) \\
    &=\Pi \left( \Gamma\in \mathcal{H} \bigg| \|\Gamma-\Gamma_0\|_F \leq \dfrac{1}{\overline{\rho_Z}^2}\sqrt{\dfrac{\log(n)}{n}}\right)
\end{align*}
by Lemma \ref{Lem_norm_Gamma} and Assumption \ref{hyp_Zi_sp}. Thus, by Assumption \ref{hyp_gamma0}:
\begin{align*}
    \|\Gamma-\Gamma_0\|_F &= \|\Gamma_0^{1/2} (\Gamma_0^{-1/2}\Gamma\Gamma_0^{-1/2}-Id) \Gamma_0^{1/2}\|_F \\
    &\leq \|\Gamma_0\|_{sp} \|\Gamma_0^{-1/2}\Gamma\Gamma_0^{-1/2}-Id\|_F \\
    &\leq \overline{\rho_{\Gamma_0}} \|\Gamma_0^{-1/2}\Gamma\Gamma_0^{-1/2}-Id\|_F
\end{align*}
By using Lemma \ref{Lem_control_vp}, we obtain that: 
\begin{align*}
    \Pi \bigg( \Gamma\in \mathcal{H} \bigg| d_n^2(\Gamma,\Gamma_0) \leq \dfrac{\log(n)}{n}\bigg) &\geq \Pi \left( \Gamma\in \mathcal{H} \bigg| \|\Gamma_0^{-1/2}\Gamma\Gamma_0^{-1/2}-Id\|_F \leq \overline{\rho_{\Gamma_0}} ^{-1}\overline{\rho_Z}^{-2}\sqrt{\dfrac{\log(n)}{n}}\right) \\
    &\geq \Pi \left( \Gamma\in \mathcal{H} \bigg| \bigcap_{k=1}^{q} \left\{ 1 \leq \rho_k(\Gamma_0^{-1/2}\Gamma\Gamma_0^{-1/2}) \leq 1 + \overline{\rho_{\Gamma_0}} ^{-1}\overline{\rho_Z}^{-2}\sqrt{\dfrac{\log(n)}{\dg n}} \right\}\right)
\end{align*}

Then, denoting by $A= \Gamma_0^{-1/2}\Gamma\Gamma_0^{-1/2} \in \mathbb{R}^{\dg \times \dg }$, since $\Gamma \sim \mathcal{IW}_{\dg}(d, \Sigma)$, we known that $A \sim \mathcal{IW}_{\dg}(d, \Gamma_0^{-1/2}\Sigma\Gamma_0^{-1/2})$, and so $A^{-1} \sim \mathcal{W}_{\dg}(d, \Gamma_0^{1/2}\Sigma^{-1}\Gamma_0^{1/2})$. Then, using Lemma 6.3 of \citet{ning_bayesian_2020}) on the eigenvalues of a Wishart distribution, we obtain that, for large $n$ and since $\dg$ is fixed:

\begin{align*}
    \Pi \bigg( \Gamma\in \mathcal{H} \bigg| d_n^2(\Gamma,\Gamma_0) \leq \dfrac{\log(n)}{n}\bigg) \geq &\left( \dfrac{a_1 t e^2 d}{8 \sqrt{\pi}} \right)^{-\dg} \left( \dfrac{2 d \dg}{e a_1 t} \right)^{-d \dg/2} \left( \dfrac{d}{2e} \right)^{-\dg^2/2} \times \\
    &\det(\Gamma_0^{1/2}\Sigma^{-1}\Gamma_0^{1/2})^{-d/2}
    \exp \left( -\dfrac{a_1 (1+t) \text{Tr}(\Gamma_0^{-1/2}\Sigma\Gamma_0^{-1/2})}{2} \right)
\end{align*}
with $a_1= \left(  1 + \overline{\rho_{\Gamma_0}} ^{-1}\overline{\rho_Z}^{-2}\sqrt{\dfrac{\log(n)}{\dg n}}\right)^{-1}$ and $t=\overline{\rho_{\Gamma_0}} ^{-1}\overline{\rho_Z}^{-2}\sqrt{\dfrac{\log(n)}{\dg n}}$.

Finally, for $n$ large enough, we have that:

$$\log\left(\Pi \left( \Gamma\in \mathcal{H} \bigg| d_n^2(\Gamma,\Gamma_0) \leq \dfrac{\log(n)}{n}\right) \right) \gtrsim -\log(n).$$

Concerning the second term $\Pi \left( \beta\in \mathcal{B} \bigg| \dfrac{K'^2}{n} \|X\|_{*}^2 \|\beta-\beta_0\|_1^2 \leq \dfrac{\log(n)}{n}\right)$ in the lower bound of $\Pi(\mathcal{D}_n)$, by defining $\mathcal{B}_{S_0,n}= \left\{\beta_{S_0} \in \mathbb{R}^{s_0} \bigg| \dfrac{K'}{\sqrt{n}} \|X\|_{*} \|\beta_{S_0}-\beta_{0,S_0}\|_1 \leq \sqrt{\dfrac{\log(n)}{n}} \right\}$ we have that:

\begin{align*}
    \Pi \bigg( \beta\in \mathcal{B} \bigg| \dfrac{K'^2}{n} \|X\|_{*}^2 \|\beta-\beta_0\|_1^2 \leq \dfrac{\log(n)}{n}\bigg) &\geq \Pi \left(S=S_0, \beta\in \mathcal{B} \bigg| \dfrac{K'}{\sqrt{n}} \|X\|_{*} \|\beta-\beta_0\|_1 \leq \sqrt{\dfrac{\log(n)}{n}}\right) \\
    &\geq \dfrac{\pi_p(s_0)}{\binom{\db}{s_0}} \int_{\mathcal{B}_{S_0,n}} g_{S_0}(\beta_{S_0}) d\beta_{S_0} \\
    &\geq \dfrac{\pi_p(s_0)}{\binom{\db}{s_0}} e^{-\lambda \|\beta_0\|_1} \int_{\mathcal{B}_{S_0,n}} g_{S_0}(\beta_{S_0}-\beta_{0,S_0}) d\beta_{S_0} 
\end{align*}
because $g_S$ is the Laplace distribution so satisfy the inequality $g_{S_0}(\beta_{S_0}) \geq e^{-\lambda \|\beta_0\|_1} g_{S_0}(\beta_{S_0}-\beta_{0,S_0})$. Then, since $s_0>0$ by Assumption \ref{hyp_beta0} and using the equation $(6.2)$ of \citet{castillo_bayesian_2015}, we obtain that:

\begin{align*}
    \int_{\mathcal{B}_{S_0,n}} g_{S_0}(\beta_{S_0}-\beta_{0,S_0}) d\beta_{S_0} &\geq e^{-\lambda \frac{\sqrt{\log(n)}}{K' \|X\|_{*}}} \dfrac{\left(\lambda \frac{\sqrt{\log(n)}}{K' \|X\|_{*}}\right)^{s_0}}{s_0!} \\
    &\geq e^{-L_3 \sqrt{\frac{\log(n)}{n}}} \dfrac{\left(\frac{\sqrt{\log(n)}}{L_1 p^{L_2}}\right)^{s_0}}{s_0!}  \\
\end{align*}
by Assumption \ref{hyp_lambda}. We deduce that, by using $\binom{\db}{s_0} s_0! \leq (\db)^{s_0}$: 

\begin{align*}
    \Pi \Bigg( \beta \in \mathcal{B} \bigg| \dfrac{K'^2}{n} \|X\|_{*}^2 \|\beta-\beta_0\|_1^2 \leq \dfrac{\log(n)}{n}\Bigg) &\geq \dfrac{\pi_p(s_0)}{\binom{\db}{s_0}} e^{-\lambda \|\beta_0\|_1} e^{-L_3 \sqrt{\frac{\log(n)}{n}}} \dfrac{\left(\frac{\sqrt{\log(n)}}{L_1 p^{L_2}}\right)^{s_0}}{s_0!} \\
    &\geq \pi_p(s_0) \log(n)^{\frac{s_0}{2}} e^{-(L_2+1)s_0\log(p)-s_0\log(q)-L_3\sqrt{\frac{\log(n)}{n}}-s_0\log(L_1) - \lambda \|\beta_0\|_1}\\
    &\gtrsim \pi_p(s_0) \exp\left(- \Tilde{C} s_0\log(p) \right)
\end{align*}
for a constant $\Tilde{C}$ since $s_0 + \sqrt{\frac{\log(n)}{n}}+s_0\log(q) + s_0\log(p) \lesssim s_0\log(p)$ and $\lambda \|\beta_0\|_1 \lesssim s_0\log(p)$ by Assumption \ref{hyp_beta0}.
Finally, there exists a constant $L$ such that:

\begin{align*}
    \Pi(\mathcal{D}_n) &\geq \Pi \left( \Gamma\in \mathcal{H} \bigg| d_n^2(\Gamma,\Gamma_0) \leq \dfrac{\log(n)}{n}\right) \Pi \left( \beta\in \mathcal{B} \bigg| \dfrac{K'^2}{n} \|X\|_{*}^2 \|\beta-\beta_0\|_1^2 \leq \dfrac{\log(n)}{n}\right) \\
    &\geq \pi_p(s_0) e^{-L (s_0\log(p) + \log(n) )}
\end{align*}

Thus, we have shown that $e^{-(1+C)c_1 \log(n)}\Pi(\mathcal{D}_n) \geq \pi_p(s_0) e^{-M(s_0 \log(p) + \log(n))}$ for some constant $M$, as required to conclude the proof of Lemma \ref{Lem_denom}.

\subsection{Proof of Lemma \ref{Lem_test}}
\label{Proof_Lem2}

Let $(\beta_1, \Gamma_1)\in \mathcal{B} \times \mathcal{H}$ such that $R_n(p_0,p_1) \geq \epsilon_n^2$. 
First, for testing $H_0$: $p=p_0$ against $H_1$: $p=p_1$, consider the most powerful test $\overline{\varphi}_n=\mathds{1}_{\Lambda_n(\beta_1,\Gamma_1)\geq 1}$ given by the Neyman-Pearson lemma, where $\Lambda_n(\beta_1,\Gamma_1)=\dfrac{p_1}{p_0}$ be the likelihood ratio of $p_1$ and $p_{0}$.
Thus, 
\begin{align*}
    \mathbb{E}_0[\overline{\varphi}_n] &= \mathbb{P}_0(\sqrt{\Lambda_n(\beta_1,\Gamma_1)}\geq 1)= \int \mathds{1}_{\sqrt{p_1(y)}\geq \sqrt{p_0(y)}} p_0(y)dy \\
    &\leq \int \sqrt{p_0(y) p_1(y)} dy = e^{-n R_n(p_0,p_1)} \leq e^{-n\epsilon_n^2}
\end{align*}
by assumption on $(\beta_1, \Gamma_1)$. This proves the first result of the lemma.

Then, for the second part of the lemma, note that:

\begin{equation}
\label{Eq_esperance_p1}
    \mathbb{E}_1[1-\overline{\varphi}_n] = \mathbb{P}_1(\sqrt{\Lambda_n(\beta_1,\Gamma_1)}\leq 1) \leq \int \sqrt{p_0(y) p_1(y)} dy \leq e^{-n\epsilon_n^2}
\end{equation}
However, by using Cauchy-Schwarz inequality:
\begin{align*}
    \mathbb{E}_{\beta, \Gamma}[1-\overline{\varphi}_n] &= \int (1-\overline{\varphi}_n(y))\dfrac{p_{\beta, \Gamma}(y)}{p_1(y)}dp_1(y) \\
    &\leq \mathbb{E}_{1}[1-\overline{\varphi}_n]^{1/2} \mathbb{E}_{1}\left[\left(\dfrac{p_{\beta, \Gamma}}{p_1}\right)^2\right]^{1/2} \\
    &\leq e^{-n\epsilon_n^2/2} \mathbb{E}_{1}\left[\left(\dfrac{p_{\beta, \Gamma}}{p_1}\right)^2\right]^{1/2}
\end{align*}
by Equation \eqref{Eq_esperance_p1}. Therefore, the test $\overline{\varphi}_n$ can also have exponentially small error of type II at other alternatives if we can controlled the second term: we want to show that $\mathbb{E}_{1}\left[\left(\dfrac{p_{\beta, \Gamma}}{p_1}\right)^2\right]^{1/2}\leq e^{7n\epsilon_n^2/16}$ for every $(\beta, \Gamma) \in \mathcal{F}_{1,n}$.

Recall that here $p_{\beta, \Gamma}= \prod_{i=1}^n \mathcal{N}_{m_i}(f_i(X_i \beta), \Delta_{\Gamma,i})$, where $\Delta_{\Gamma,i}=Z_i \Gamma Z_i^\top + \sigma^2 I_{m_i}$. By denoting $\Delta_{\Gamma,i}^{*} ~ = ~\Delta_{\Gamma,i}^{-1/2} \Delta_{\Gamma_1,i} \Delta_{\Gamma,i}^{-1/2}$, then for $(\beta, \Gamma) \in \mathcal{F}_{1,n}$, if $2 \Delta_{\Gamma,i}^{*} -Id$ and $2Id -\Delta_{\Gamma,i}^{*^{-1}}$ are non-singular matrices for every $i \in \{1,\cdots,n\}$, we can show that: 

\begin{align}
    \mathbb{E}_{1}\left[\left(\dfrac{p_{\beta, \Gamma}}{p_1}\right)^2\right] = &\prod_{i=1}^n \left[ \det(\Delta_{\Gamma,i}^{*})^{1/2} \det(2Id-\Delta_{\Gamma,i}^{*^{-1}})^{-1/2} \right] \times \notag\\
    & \exp \left\{\sum_{i=1}^n \| (2\Delta_{\Gamma,i}^{*}-Id)^{-1/2} \Delta_{\Gamma,i}^{-1/2} (f_i(X_i\beta)-f_i(X_i\beta_1))\|_2^2 \right\} \label{eq_esp}.
\end{align}

Let us now prove that these matrices are non-singular. We have, for all $k\leq m_i$,

\begin{equation*}
    \max_{1\leq i \leq n} \|\Delta_{\Gamma,i}^{*}-Id\|_{sp} =  \max_{1\leq i \leq n} \rho_{max}(|\Delta_{\Gamma,i}^{*}-Id|) \geq \max_{1\leq i \leq n} | \rho_k(\Delta_{\Gamma,i}^{*})-1|.
\end{equation*}
Note that
\begin{align*}
    \max_{1\leq i \leq n} \|\Delta_{\Gamma,i}^{*}-Id\|_{sp} &= \max_{1\leq i \leq n} \|\Delta_{\Gamma,i}^{-1/2}(\Delta_{\Gamma_1,i}-\Delta_{\Gamma,i})\Delta_{\Gamma,i}^{-1/2}\|_{sp} \\
    &\leq  \max_{1\leq i \leq n} \|\Delta_{\Gamma,i}^{-1}\|_{sp} \|\Delta_{\Gamma_1,i}-\Delta_{\Gamma,i}\|_{F} \\
    &\leq \max_{1\leq i \leq n} \|\Delta_{\Gamma,i}^{-1}\|_{sp} d_n(\Gamma,\Gamma_1) \\
    &\leq \dfrac{\epsilon_n^2}{2M_{\text{obs}}}
\end{align*}
by Lemma \ref{Lem_norm_Gamma} and since $(\beta,\Gamma) \in \mathcal{F}_{1,n}$. Thus, for all $k\leq m_i$, $\max_{1\leq i \leq n} | \rho_k(\Delta_{\Gamma,i}^{*})-1| \leq \dfrac{\epsilon_n^2}{2M_{\text{obs}}}$. We deduce that

\begin{equation}
    \label{eq_borne_vp}
    1-\dfrac{\epsilon_n^2}{2M_{\text{obs}}} \leq \min_{1\leq i \leq n} \rho_{\min}(\Delta_{\Gamma,i}^{*}) \leq \max_{1\leq i \leq n} \rho_{\max}(\Delta_{\Gamma,i}^{*})\leq 1+\dfrac{\epsilon_n^2}{2M_{\text{obs}}}.
\end{equation}

Therefore, since $\dfrac{\epsilon_n^2}{2M_{\text{obs}}} \underset{n \rightarrow \infty}{\longrightarrow} 0$ by Assumption \ref{hyp_beta0}, and for all $k \leq m_i$, $\rho_k(2 \Delta_{\Gamma,i}^{*} -Id)=2\rho_k( \Delta_{\Gamma,i}^{*})-1$ and $\rho_k(2Id -\Delta_{\Gamma,i}^{*^{-1}})=2 - \rho_k( \Delta_{\Gamma,i}^{*^{-1}})=2-\rho_k^{-1}( \Delta_{\Gamma,i}^{*})$, we deduce that $2 \Delta_{\Gamma,i}^{*} -Id$ and $2Id -\Delta_{\Gamma,i}^{*^{-1}}$ are non-singular on $\mathcal{F}_{1,n}$ for every $i \in \{1,\cdots,n\}$. 

For concluding the proof, it remains to bound the right side term of \eqref{eq_esp}.
By using \eqref{eq_borne_vp} and the inequalities $(1-x^2)/(1-2x) \leq 1+3x$ for $x>0$ small, and $1+x\leq e^x$, we obtain for $n$ large enough:
\begin{align*}
    \det(\Delta_{\Gamma,i}^{*})^{1/2} \det(2Id-\Delta_{\Gamma,i}^{*^{-1}})^{-1/2} &= \left(\prod_{k=1}^{m_i} \rho_k(\Delta_{\Gamma,i}^{*})\right)^{1/2} \left(\prod_{k=1}^{m_i} 2-\rho_k^{-1}(\Delta_{\Gamma,i}^{*})\right)^{-1/2} \\
    &= \left(\prod_{k=1}^{m_i} \dfrac{\rho_k(\Delta_{\Gamma,i}^{*})}{2-\rho_k^{-1}(\Delta_{\Gamma,i}^{*})}\right)^{1/2} \\
    &\leq \left( \dfrac{1-\dfrac{\epsilon_n^4}{4M_{\text{obs}}^2}}{1+\dfrac{\epsilon_n^2}{M_{\text{obs}}}} \right)^{m_i/2} \leq \left(1+3\dfrac{\epsilon_n^2}{2M_{\text{obs}}} \right)^{m_i/2} \\
    &\leq \exp\left(3\dfrac{m_i\epsilon_n^2}{4M_{\text{obs}}} \right) \leq e^{3\epsilon_n^2/4}
\end{align*}
Moreover, for $n$ large enough,
\begin{align*}
    \sum_{i=1}^n &\| (2\Delta_{\Gamma,i}^{*}-Id)^{-1/2} \Delta_{\Gamma,i}^{-1/2} (f_i(X_i\beta)-f_i(X_i\beta_1))\|_2^2 \\
    &\leq \max_{1\leq i \leq n} \|(2\Delta_{\Gamma,i}^{*}-Id)^{-1}\|_{sp} \max_{1\leq i \leq n} \|\Delta_{\Gamma,i}^{-1}\|_{sp} \sum_{i=1}^n \|f_i(X_i\beta)-f_i(X_i\beta_1)\|_2^2 \\
    &\leq 2 \gamma_n \dfrac{n \epsilon_n^2}{16 \gamma_n} = \dfrac{n\epsilon_n^2}{8} 
\end{align*}
since $(\beta,\Gamma) \in \mathcal{F}_{1,n}$. Finally, by using \eqref{eq_esp}, we conclude that $\mathbb{E}_{1}\left[\left(\dfrac{p_{\beta, \Gamma}}{p_1}\right)^2\right]^{1/2} \leq e^{3n\epsilon_n^2/8} e^{n\epsilon_n^2/16}=e^{7n\epsilon_n^2/16}$, and so $\sup_{(\beta, \Gamma) \in \mathcal{F}_{1,n}} \mathbb{E}_{\beta, \Gamma}[1-\overline{\varphi}_n] \leq e^{-n \epsilon_n^2/16}$, which concludes the proof.

\section{Useful lemmas}\label{appB}
\begin{lemma}
    \label{Lem_rho_max}
    For $A$ and $B$ two matrices, $$\rho_{min}(B) \|A\|_{sp}^2 \leq \rho_{max}(ABA^\top) \leq \rho_{max}(B) \|A\|_{sp}^2.$$
\end{lemma}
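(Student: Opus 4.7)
The plan is to use the variational (Rayleigh quotient) characterization of the maximum eigenvalue of a symmetric matrix, reducing the bounds on $\rho_{max}(ABA^\top)$ to the corresponding bounds for $B$ via a change of variables. Throughout, I will implicitly assume $B$ is symmetric (which is the case in every invocation of the lemma in the paper, where $B$ is a covariance matrix or its inverse), so that $ABA^\top$ is also symmetric and its spectral radius equals its largest eigenvalue.

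First I would write, for any unit vector $x$,
\[
x^\top A B A^\top x = (A^\top x)^\top B (A^\top x).
\]
Since $B$ is symmetric, the Rayleigh quotient bounds give, for every $y \in \mathbb{R}^k$,
\[
\rho_{min}(B)\, \|y\|_2^2 \;\leq\; y^\top B y \;\leq\; \rho_{max}(B)\, \|y\|_2^2.
\]
Applying this with $y = A^\top x$ and then taking the supremum over unit vectors $x$, the upper bound follows from
\[
\rho_{max}(ABA^\top) = \sup_{\|x\|_2=1} x^\top ABA^\top x \;\leq\; \rho_{max}(B)\, \sup_{\|x\|_2=1} \|A^\top x\|_2^2 \;=\; \rho_{max}(B)\, \|A^\top\|_{sp}^2 \;=\; \rho_{max}(B)\, \|A\|_{sp}^2,
\]
using that $\|A^\top\|_{sp} = \|A\|_{sp}$ by definition of the spectral norm.

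For the lower bound, the same substitution gives
\[
\rho_{max}(ABA^\top) \;\geq\; \rho_{min}(B)\, \sup_{\|x\|_2=1} \|A^\top x\|_2^2 \;=\; \rho_{min}(B)\, \|A\|_{sp}^2,
\]
which is meaningful since $B$ positive semidefinite implies $\rho_{min}(B) \geq 0$, so the inequality preserves its direction when the supremum is pulled through.

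There is no real obstacle here; this is essentially bookkeeping built on the Courant--Fischer minimax principle. The only subtle point is making sure $B$ is symmetric so that $\rho_{max}$ agrees with the Rayleigh quotient supremum, and noting that $\sup_{\|x\|_2=1}\|A^\top x\|_2 = \|A^\top\|_{sp} = \|A\|_{sp}$ so that the spectral-norm factor appears on both ends with the correct power of $2$.
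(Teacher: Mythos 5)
Your proof is correct and uses the same Rayleigh-quotient / Courant--Fischer argument as the paper, which spells out the upper bound and dismisses the lower bound as ``similar arguments.'' A small improvement on your part is making the implicit hypotheses explicit: $B$ must be symmetric so that $\rho_{max}$ coincides with the Rayleigh supremum, and $\rho_{min}(B)\ge 0$ is needed to pull the constant outside the supremum in the lower bound --- one could avoid this sign restriction by instead evaluating the Rayleigh quotient at a maximizer of $\|A^\top x\|_2/\|x\|_2$ and bounding the $B$-quotient below by $\rho_{min}(B)$, but since $B$ is always a covariance matrix or its inverse in the paper's uses, your argument suffices as written.
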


\begin{proof}
By the Courant–Fischer–Weyl min-max principle,
    \begin{align*}
    \rho_{max}(A B A^\top) &= \underset{x\neq0}{\max} \dfrac{\langle A B A^\top x, x \rangle}{\|x\|^2} \\
    &= \underset{x\neq0}{\max} \dfrac{\langle B A^\top x, A^\top x \rangle}{\|x\|^2} \\
    &= \underset{x\neq0}{\max} \dfrac{\langle B A^\top x, A^\top x \rangle}{\|A^\top x\|^2} \dfrac{\|A^\top x\|^2}{\|x\|^2} \\
    &\leq \rho_{max}(B) \underset{x\neq0}{\max} \dfrac{\|A^\top x\|^2}{\|x\|^2} \\
    &= \rho_{max}(B) \rho_{max}(A A^\top) \\
    &= \rho_{max}(B) \| A \| _{sp}^2 
\end{align*}
We obtain the other inequality with similar arguments.
\end{proof}

\begin{lemma}
    \label{vp_Delta}
    Grant Assumptions~\ref{hyp_gamma0} and \ref{hyp_Zi_sp}. Thus, $\Delta_{\Gamma_0,i}:= Z_i \Gamma_0 Z_i^\top + \sigma^2 I_{m_i}$ satisfies:
\[1 \lesssim \min_i \rho_{min}(\Delta_{\Gamma_0,i}) \leq \max_i \rho_{max}(\Delta_{\Gamma_0,i}) \lesssim 1\]
\end{lemma}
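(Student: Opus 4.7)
The proof is a direct consequence of Weyl's inequality together with Lemma~\ref{Lem_rho_max} and the bounded-spectrum assumptions on $\Gamma_0$ and the $Z_i$'s. I would proceed in two short steps, handling the minimum and maximum eigenvalues separately.

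For the lower bound, I would use the fact that $Z_i \Gamma_0 Z_i^\top$ is positive semi-definite, which holds because $\Gamma_0 \succeq 0$ by Assumption~\ref{hyp_gamma0}. Then Weyl's inequality gives
\[
\rho_{\min}(\Delta_{\Gamma_0,i}) \;=\; \rho_{\min}(Z_i \Gamma_0 Z_i^\top + \sigma^2 I_{n_i}) \;\geq\; \rho_{\min}(Z_i \Gamma_0 Z_i^\top) + \sigma^2 \;\geq\; \sigma^2,
\]
uniformly in $i$, which yields $\min_i \rho_{\min}(\Delta_{\Gamma_0,i}) \geq \sigma^2 > 0$.

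For the upper bound, I would again invoke Weyl's inequality to write
\[
\rho_{\max}(\Delta_{\Gamma_0,i}) \;\leq\; \rho_{\max}(Z_i \Gamma_0 Z_i^\top) + \sigma^2,
\]
and then apply Lemma~\ref{Lem_rho_max} with $A = Z_i$ and $B = \Gamma_0$ to obtain
\[
\rho_{\max}(Z_i \Gamma_0 Z_i^\top) \;\leq\; \rho_{\max}(\Gamma_0)\, \|Z_i\|_{sp}^2 \;\leq\; \overline{\rho_\Gamma}\, \overline{\rho_Z}^{\,2},
\]
where the last inequality uses Assumption~\ref{hyp_gamma0} ($\rho_{\max}(\Gamma_0) \leq \overline{\rho_\Gamma}$) and Assumption~\ref{hyp_Zi_sp} ($\max_i \|Z_i\|_{sp} \leq \overline{\rho_Z}$). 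Taking the maximum over $i$ gives $\max_i \rho_{\max}(\Delta_{\Gamma_0,i}) \leq \overline{\rho_\Gamma}\, \overline{\rho_Z}^{\,2} + \sigma^2$, concluding the proof.

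There is no real obstacle here: the lemma is essentially a bookkeeping consequence of the two spectrum assumptions, and the only slightly non-trivial ingredient is the sandwich bound of Lemma~\ref{Lem_rho_max}, which is already established earlier. The main thing to be careful about is to ensure the bounds are uniform over $i$, which is why the assumptions are phrased as $\max_i \|Z_i\|_{sp} \leq \overline{\rho_Z}$ rather than merely pointwise.
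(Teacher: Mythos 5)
Your proof is correct and follows essentially the same two-step argument as the paper: Weyl's inequality combined with positive (semi-)definiteness of $Z_i\Gamma_0 Z_i^\top$ for the lower bound, and Weyl's inequality plus Lemma~\ref{Lem_rho_max} together with Assumptions~\ref{hyp_gamma0} and~\ref{hyp_Zi_sp} for the upper bound. (Your remark that $Z_i\Gamma_0 Z_i^\top$ is only positive \emph{semi}-definite is in fact slightly more careful than the paper's wording, and does not affect the conclusion.)
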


\begin{proof}
By the Weyl's inequality, for $1 \leq i \leq n$,
\[\rho_{min}(\Delta_{\Gamma_0,i}) \geq \rho_{min}(Z_i \Gamma_0 Z_i^\top) + \sigma^2 \geq \sigma^2 \]
since $Z_i \Gamma_0 Z_i^\top$ is a positive definite matrix. Thus $\min_i \rho_{min}(\Delta_{\Gamma_0,i}) \geq \sigma^2$, otherwise, $\min_i \rho_{min}(\Delta_{\Gamma_0,i}) \gtrsim 1$.

For the other inequality, by the Weyl's inequality, we have that
\[\rho_{max}(\Delta_{\Gamma_0,i}) \leq \rho_{max}(Z_i \Gamma_0 Z_i^\top) + \sigma^2.\] 
Then, by Lemma~\ref{Lem_rho_max}, we have that:
$$\rho_{max}(Z_i \Gamma_0 Z_i^\top) \leq \rho_{max}(\Gamma_0) \| Z_i \| _{sp}^2 $$
and by Assumptions~\ref{hyp_gamma0} and \ref{hyp_Zi_sp},
\[\max_i \rho_{max}(\Delta_{\Gamma_0,i}) \leq \rho_{max}(\Gamma_0) \max_i \| Z_i \| _{sp}^2 + \sigma^2 \lesssim 1. \]
\end{proof}

\begin{lemma}
    \label{Lem_norm_Gamma}
    For $\Gamma_1, \Gamma_2 \in \mathcal{H}$, under Assumptions~\ref{hyp_ni_sup_q}, \ref{hyp_rg_Zi} and \ref{hyp_Zi_sp}, we have that
$$\max_i \|\Delta_{\Gamma_1,i}-\Delta_{\Gamma_2,i} \|_F^2 \lesssim  \|\Gamma_1-\Gamma_2 \|_F^2 \lesssim d_n^2(\Gamma_1,\Gamma_2)=\frac{1}{n}\sum_{i=1}^n \|\Delta_{\Gamma_1,i}-\Delta_{\Gamma_2,i}\|_F^2.$$
\end{lemma}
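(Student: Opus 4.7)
The plan is to exploit the cancellation $\Delta_{\Gamma_1,i} - \Delta_{\Gamma_2,i} = Z_i(\Gamma_1-\Gamma_2)Z_i^\top$, since the common $\sigma^2 I_{n_i}$ term vanishes. With $M := \Gamma_1-\Gamma_2$, both inequalities reduce to two-sided control of $\|Z_i M Z_i^\top\|_F$ in terms of $\|M\|_F$.

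For the first inequality, I would invoke the standard submultiplicativity bound $\|ABC\|_F \leq \|A\|_{sp}\|B\|_F\|C\|_{sp}$ applied with $A=Z_i$, $B=M$, $C=Z_i^\top$ (and $\|Z_i^\top\|_{sp}=\|Z_i\|_{sp}$) to obtain $\|Z_i M Z_i^\top\|_F^2 \leq \|Z_i\|_{sp}^4 \|M\|_F^2$. Taking the maximum over $i$ and using Assumption~\ref{hyp_Zi_sp} ($\max_i \|Z_i\|_{sp} \leq \overline{\rho_Z}$) then yields $\max_i \|\Delta_{\Gamma_1,i}-\Delta_{\Gamma_2,i}\|_F^2 \leq \overline{\rho_Z}^4 \|M\|_F^2$, which is the upper bound.

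For the lower bound, I would restrict the average defining $d_n^2$ to indices $i$ with $n_i \geq q$, for which Assumption~\ref{hyp_rg_Zi} gives $\rho_{min}(Z_i^\top Z_i) \geq \underline{\rho_Z}^2 > 0$. The key algebraic step is the inequality $\|Z_i M Z_i^\top\|_F^2 \geq \rho_{min}^2(Z_i^\top Z_i)\|M\|_F^2$, which I would obtain by writing $\|Z_i M Z_i^\top\|_F^2 = \text{Tr}(M^\top Z_i^\top Z_i M Z_i^\top Z_i)$ via cyclic invariance of the trace, diagonalizing $Z_i^\top Z_i = U D U^\top$ with $D=\diag(d_1,\ldots,d_q)$, and expanding in the rotated entries $N := U^\top M U$:
\begin{equation*}
\|Z_i M Z_i^\top\|_F^2 = \sum_{j,k} d_j\, d_k\, N_{jk}^2 \;\geq\; \rho_{min}^2(Z_i^\top Z_i)\sum_{j,k} N_{jk}^2 \;=\; \rho_{min}^2(Z_i^\top Z_i)\,\|M\|_F^2.
\end{equation*}
Summing over $i$ with $n_i \geq q$ then gives $d_n^2(\Gamma_1,\Gamma_2) \geq \underline{\rho_Z}^4 \|M\|_F^2 \cdot \tfrac{1}{n}\sum_{i=1}^n \mathds{1}_{n_i \geq q}$, and Assumption~\ref{hyp_ni_sup_q}, interpreted (as the paper's discussion makes clear) as this average being bounded \emph{below} by a positive constant, closes the argument.

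The only nontrivial step is the symmetric sandwich lower bound, but its proof is a one-line diagonalization once cyclicity of the trace reduces the computation to $\text{Tr}(M A M A)$ with $A = Z_i^\top Z_i$. The subtlety worth flagging is the reason for discarding indices with $n_i < q$: for such $i$ the map $M \mapsto Z_i M Z_i^\top$ may have a nontrivial kernel, so no pointwise lower bound is available; Assumption~\ref{hyp_ni_sup_q} is precisely the hypothesis that ensures the well-behaved indices carry a positive fraction of the mass and therefore drive the average.
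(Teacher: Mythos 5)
Your proof is correct and reaches the paper's conclusion with the same overall structure: exploit the cancellation $\Delta_{\Gamma_1,i}-\Delta_{\Gamma_2,i}=Z_i(\Gamma_1-\Gamma_2)Z_i^\top$, bound $\max_i$ above via submultiplicativity and Assumption~\ref{hyp_Zi_sp}, and bound $d_n^2$ below by restricting to indices with $n_i\geq q$ and invoking Assumptions~\ref{hyp_rg_Zi} and~\ref{hyp_ni_sup_q}. The one place you diverge from the paper is the pointwise lower bound $\|M\|_F^2\lesssim\|Z_iMZ_i^\top\|_F^2$ for those indices: you diagonalize $Z_i^\top Z_i=UDU^\top$ and compute $\|Z_iMZ_i^\top\|_F^2=\mathrm{Tr}(M^\top Z_i^\top Z_i M Z_i^\top Z_i)=\sum_{j,k}d_jd_kN_{jk}^2\geq\rho_{\min}^2(Z_i^\top Z_i)\|M\|_F^2$ with $N=U^\top MU$, whereas the paper inserts the left inverse $(Z_i^\top Z_i)^{-1}Z_i^\top$ around $Z_iMZ_i^\top$ and applies $\|AB\|_F\leq\|A\|_{sp}\|B\|_F$ a second time, obtaining the same constant since $\|(Z_i^\top Z_i)^{-1}Z_i^\top\|_{sp}^4=\rho_{\min}^{-2}(Z_i^\top Z_i)$. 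Your diagonalization is slightly more self-contained (no auxiliary left-inverse factorization), while the paper's route reuses a single inequality in both directions; neither is more general. You also correctly flag that Assumption~\ref{hyp_ni_sup_q} must be read as a \emph{lower} bound on $n^{-1}\sum_i\mathds{1}_{n_i\geq q}$ — this matches exactly how the paper uses it in the final step of the same proof.
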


\begin{proof}
First,
    \[\| \Delta_{\Gamma_1,i}-\Delta_{\Gamma_2,i} \|_F^2 = \| Z_i (\Gamma_1 - \Gamma_2) Z_i^\top\|_F^2 \leq \| Z_i \| _{sp}^4 \|\Gamma_1 - \Gamma_2\|_F^2,\]
    since $\|AB\|_F \leq \|A\|_{sp} \|B\|_F$, and by Assumption~\ref{hyp_Zi_sp}, we have that $$ \max_{1 \leq i \leq n}\| \Delta_{\Gamma_1,i}-\Delta_{\Gamma_2,i} \|_F^2 \lesssim \|\Gamma_1 - \Gamma_2\|_F^2.$$ 
    Then, by Assumption~\ref{hyp_rg_Zi}, for each $i$ such that $m_i \geq \dg$,
    $Z_i^\top Z_i$ is invertible and
    \begin{align*}
        \|\Gamma_1 - \Gamma_2\|_F^2 &= \|(Z_i^\top Z_i)^{-1}Z_i^\top Z_i(\Gamma_1 - \Gamma_2) Z_i^\top Z_i (Z_i^\top Z_i)^{-1}\|_F^2 \\
        &\leq \| Z_i (\Gamma_1 - \Gamma_2) Z_i^\top\|_F^2 \| (Z_i^\top Z_i)^{-1} Z_i^\top \| _{sp}^4.
    \end{align*}
    Then, by Assumption~\ref{hyp_ni_sup_q}, we have that
    \begin{align*}
        \max_{1 \leq i \leq n}\| \Delta_{\Gamma_1,i}-\Delta_{\Gamma_2,i} \|_F^2 &\lesssim \|\Gamma_1 - \Gamma_2\|_F^2 \\
        &\lesssim \dfrac{1}{\sum_{i=1}^n \mathds{1}_{m_i \geq \dg}} \sum_{i : m_i \geq \dg} \| Z_i (\Gamma_1 - \Gamma_2) Z_i^\top\|_F^2 \| (Z_i^\top Z_i)^{-1} Z_i^\top \| _{sp}^4 \\
    &\lesssim \dfrac{1}{n} \sum_{i : m_i \geq \dg} \| Z_i (\Gamma_1 - \Gamma_2) Z_i^\top\|_F^2 \| (Z_i^\top Z_i)^{-1} Z_i^\top \| _{sp}^4 \\
    &\lesssim \dfrac{1}{n} \sum_{i=1}^n \| Z_i (\Gamma_1 - \Gamma_2) Z_i^\top\|_F^2  = d_n^2(\Gamma_1,\Gamma_2).
    \end{align*}
where the last inequality uses Assumptions \ref{hyp_rg_Zi} and \ref{hyp_Zi_sp}.
\end{proof}

\begin{lemma}
\label{Lem_control_vp}
    For a positive definite symmetric matrix $A \in \mathbb{R}^{\dg\times \dg}$ such as its eigenvalues satisfy \\$1\leq \rho_1(A) \leq ... \leq\rho_q(A) \leq1 + \dfrac{\epsilon}{\sqrt{\dg}}$, then $\|A-I_{\dg}\|_F \leq \epsilon$. 
\end{lemma}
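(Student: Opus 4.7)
The plan is to reduce the statement to a computation on eigenvalues using the orthogonal invariance of the Frobenius norm. Since $A$ is symmetric positive definite, the spectral theorem gives an orthogonal matrix $P$ such that $A = P D P^{\top}$ with $D = \mathrm{diag}(\rho_1(A), \ldots, \rho_q(A))$. Then $A - I_q = P(D - I_q) P^{\top}$, and because the Frobenius norm is invariant under conjugation by orthogonal matrices, one has $\|A - I_q\|_F = \|D - I_q\|_F$.

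Next, I would expand the Frobenius norm of the diagonal matrix as a sum of squares of eigenvalue deviations:
\[
\|A - I_q\|_F^2 = \sum_{k=1}^q (\rho_k(A) - 1)^2.
\]
The hypothesis $1 \leq \rho_k(A) \leq 1 + \epsilon/\sqrt{q}$ gives $0 \leq \rho_k(A) - 1 \leq \epsilon/\sqrt{q}$, hence $(\rho_k(A) - 1)^2 \leq \epsilon^2/q$ for each $k$. Summing over $k$ from $1$ to $q$ gives $\|A - I_q\|_F^2 \leq \epsilon^2$, and taking square roots yields the conclusion.

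There is essentially no obstacle here: the only subtlety worth mentioning is the use of orthogonal invariance of $\|\cdot\|_F$, which follows from $\|M\|_F^2 = \mathrm{Tr}(M^{\top} M)$ and the cyclic property of the trace. Thus the proof is a two-line computation after diagonalisation.
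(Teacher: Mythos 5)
Your proof is correct and follows essentially the same approach as the paper's: both reduce $\|A-I_q\|_F^2$ to the sum $\sum_{k=1}^q (\rho_k(A)-1)^2$ and then bound each term by $\epsilon^2/q$ using the hypothesis. The paper phrases the reduction via $\mathrm{Tr}((A-I_q)^2)$ rather than an explicit orthogonal diagonalisation, but this is a cosmetic difference.
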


\begin{proof}
Observe that
\begin{align*}
    \|A-I_{\dg}\|_F \leq \epsilon &\Leftrightarrow \text{Tr}((A-I_{\dg})^2) \leq \epsilon^2 \quad \text{since } A \text{ is symmetric} \\
    &\Leftrightarrow \sum_{k=1}^\dg \rho_k(A-I_\dg)^2 \leq \epsilon^2 \\
    &\Leftrightarrow \sum_{k=1}^\dg (\rho_k(A)-1)^2 \leq \epsilon^2 .
\end{align*}
By assumption, for $1 \leq k \leq \dg$, we have that $0 \leq \rho_k(A) -1 \leq\dfrac{\epsilon}{\sqrt{\dg}}$ and then $\max_{1 \leq k \leq \dg} (\rho_k(A) -1)^2 \leq \dfrac{\epsilon^2}{\dg}$. 
Hence, since $\sum_{k=1}^\dg (\rho_k(A)-1)^2 \leq \dg \times \max_{1 \leq k \leq \dg} (\rho_k(A) -1)^2 \leq \epsilon^2$ and so $\|A-I_\dg\|_F \leq \epsilon$.
\end{proof}

\end{appendices}

\end{document}